\providecommand{\U}[1]{\protect\rule{.1in}{.1in}}
\newtheorem{theorem}{Theorem}
\newtheorem{Lemma}{Lemma}[section]
\newtheorem{Proposition}{Proposition}[section]
\newenvironment{proof}[1][Proof]{\noindent \textbf{#1.} }{\  \rule{0.5em}{0.5em}}
\begin{document}

\title{\textbf{{\Large The Cross-Quantilogram: Measuring Quantile Dependence and
Testing Directional Predictability between Time Series}}\thanks{We thank a
Co-Editor, Jianqing Fan, an Associate Editor and three anonymous referees for
constructive comments. Han's work was supported by the National Research
Foundation of Korea (NRF-2013S1A5A8021502). Linton's work was supported by
Cambridge INET and the ERC. Oka's work was supported by Singapore Academic
Research Fund (FY2013-FRC2-003). Whang's work was supported by the SNU
Creative Leading Researcher Grant. } }
\author{Heejoon Han\thanks{Department of Economics, Sungkyunkwan University, Seoul,
Republic of Korea.} \hspace{1cm} Oliver Linton\thanks{Faculty of Economics,
University of Cambridge, Cambridge, UK.} \hspace{1cm} Tatsushi
Oka\thanks{Department of Economics, National University of Singapore,
Singapore.} \hspace{1cm} Yoon-Jae Whang\thanks{Department of Economics, Seoul
National University, Seoul, Republic of Korea.} }
\date{March 14, 2016}
\maketitle

\begin{abstract}
This paper proposes the cross-quantilogram to measure the quantile dependence
between two time series. We apply it to test the hypothesis that one time
series has no directional predictability to another time series. We establish
the asymptotic distribution of the cross-quantilogram and the corresponding
test statistic. The limiting distributions depend on nuisance parameters. To
construct consistent confidence intervals we employ a stationary bootstrap
procedure; we establish consistency of this bootstrap. Also, we consider a
self-normalized approach, which yields an asymptotically pivotal statistic
under the null hypothesis of no predictability. We provide simulation studies
and two empirical applications. First, we use the cross-quantilogram to detect
predictability from stock variance to excess stock return. Compared to
existing tools used in the literature of stock return predictability, our
method provides a more complete relationship between a predictor and stock
return. Second, we investigate the systemic risk of individual financial
institutions, such as JP Morgan Chase, Morgan Stanley and AIG.\ 

\end{abstract}

\noindent\textit{Keywords:} Quantile, Correlogram, Dependence, Predictability,
Systemic risk.%
\newpage
\setcounter{page}{1}\doublespacing \ 

\section{Introduction}

Linton and Whang (2007) introduced the quantilogram to measure predictability
in different parts of the distribution of a stationary time series based on
the correlogram of \textquotedblleft quantile hits\textquotedblright. They
applied it to test the hypothesis that a given time series has no directional
predictability. More specifically, their null hypothesis was that the past
information set of the stationary time series $\{y_{t}\}$ does not improve the
prediction about whether $y_{t}$ will be above or below the unconditional
quantile. The test is based on comparing the quantilogram to a pointwise
confidence band. This contribution fits into a long literature of testing
predictability using signs or rank statistics, including the papers of Cowles
and Jones (1937), Dufour et al. (1998), and Christoffersen and Diebold (2002).
The quantilogram has several advantages compared to other test statistics for
directional predictability. It is conceptually appealing and simple to
interpret. Since the method is based on quantile hits it does not require
moment conditions like the ordinary correlogram and statistics like the
variance ratio that are derived from it, Mikosch and Starica (2000), and so it
works well for heavy tailed series. Many financial time series have heavy
tails, see, e.g., Mandelbrot (1963), Fama (1965), Rachev and Mittnik (2000),
Embrechts et al. (1997), Ibragimov et al. (2009), and Ibragimov (2009), and so
this is an important consideration in practice. Additionally, this type of
method allows researchers to consider very long lags in comparison with
regression type methods, such as Engle and Manganelli (2004).

There have been a number of recent works either extending or applying this
methodology. Davis and Mikosch (2009) have introduced the extremogram, which
is essentially the quantilogram for extreme quantiles, and Davis et al. (2012)
has provided the inference methods based on bootstrap and permutation for the
extremogram. See also Davis et al. (2013). Li (2008, 2012) has introduced a
Fourier domain version of the quantilogram while Hong (2000) has used a
Fourier domain approach for test statistics based on distributions. Further
development in the Fourier domain approach has been made by Hagemann (2013)
and Dette et al. (2015). See also Li (2014) and Kley et al. (2016). The
quantilogram has recently been applied to stock returns and exchange rates,
Laurini et al. (2008) and Chang and Shie (2011).

Our paper addresses three outstanding issues with regard to the quantilogram.
First, the construction of confidence intervals that are valid under general
dependence structures. Linton and Whang (2007) derived the limiting
distribution of the sample quantilogram under the null hypothesis that the
quantilogram itself is zero, in fact under a special case of that where the
process has a type of conditional heteroskedasticity structure. Even in that
very special case, the limiting distribution depends on model specific
quantities. They derived a bound on the asymptotic variance that allows one to
test the null hypothesis of the absence of predictability (or rather the
special case of this that they work with). Even when this model structure is
appropriate, the bounds can be quite large especially when one looks into the
tails of the distribution. The quantilogram is also useful in cases where the
null hypothesis of no predictability is not thought to be true - one can be
interested in measuring the degree of predictability of a series across
different quantiles. We provide a more complete solution to the issue of
inference for the quantilogram. Specifically, we derive the asymptotic
distribution of the quantilogram under general weak dependence conditions,
specifically strong mixing. The limiting distribution is quite complicated and
depends on the long run variance of the quantile hits. To conduct inference we
propose the stationary bootstrap method of Politis and Romano (1994) and prove
that it provides asymptotically valid confidence intervals. We investigate the
finite sample performance of this procedure and show that it works well. We
also provide \texttt{R} code that carries out the computations
efficiently.\footnote{This can be found at
http://www.oliverlinton.me.uk/research/software.} We also define a
self-normalized version of the statistic for testing the null hypothesis that
the quantilogram is zero, following Lobato (2001). This statistic has an
asymptotically pivotal distribution, under the null hypothesis, whose critical
values have been tabulated so that there is no need for long run variance
estimation or even bootstrap.

Second, we develop our methodology inside a multivariate setting and
explicitly consider the cross-quantilogram. Linton and Whang (2007) briefly
mentioned such a multivariate version of the quantilogram but they provided
neither theoretical results nor empirical results. In fact, the
cross-correlogram is a vitally important measure of dependence between time
series: Campbell, Lo, and MacKinlay (1997), for example, use the cross
autocorrelation function to describe lead lag relations between large stocks
and small stocks. We apply the cross-quantilogram to the study of stock return
predictability; our method provides a more complete picture of the
predictability structure. We also apply the cross-quantilogram to the question
of systemic risk. Our theoretical results described in the previous paragraph
are all derived for the multivariate case.

Third, we explicitly allow the cross-quantilogram to be based on conditional
(or regression) quantiles (Koenker and Basset, 1978). Using conditional
quantiles rather than unconditional quantiles, we measure directional
dependence between two time-series after parsimoniously controlling for the
information at the time of prediction.\footnote{Our analysis includes the
cross-quantilogram based on unconditional quantiles as a special case. In this
case, the cross-quantilogram is shown to be a functional of the empirical
copula introduced by Ruschendorf (1976) and Deheuvels (1979) as some
nonparametric measures of dependence, such as Spearman's rho and Kendall's
tau. In this special case, the asymptotic results for the empirical copula,
which are found in Stute (1984), Fermanian et al. (2004) and Segers (2012)
among others, can apply for the cross-quantilogram. Generally, however, the
cross-quantilogram here differs from the empirical copula process and needs
different treatment for analyzing its properties.} Moreover, we derive the
asymptotic distribution of the cross-quantilogram that are valid uniformly
over a range of quantiles.

The remainder of the paper is as follows: Section 2 introduces the
cross-quantilogram and Section 3 discusses its asymptotic properties. For
consistent confidence intervals and hypothesis tests, we define the bootstrap
procedure and introduce the self normalized test statistic. Section 4
considers the partial cross-quantilogram and gives a full treatment of its
behavior in large samples. In Section 5 we report results of some Monte Carlo
simulations to evaluate the finite sample properties of our procedures. In
Section 6 we give two applications: we investigate stock return predictability
and system risk using our methodology. Appendix contains all the proofs.

We use the following notation: The norm $\Vert\cdot\Vert$ denotes the
Euclidean norm, i.e., $\Vert z\Vert=(\sum_{j=1}^{d}z_{j}^{2})^{1/2}$ for
$z=(z_{1},\dots,z_{d})^{\top}\in\mathds{R}^{d}$ and the norm $\Vert\cdot
\Vert_{p}$ indicates the $L^{p}$ norm of a $d\times1$ random vector $z$, given
by $\Vert z\Vert_{p}=(\sum_{j=1}^{d}E|z_{j}|^{p})^{1/p}$ for $p>0$. Let
$1[\cdot]$ be the indicator function taking the value one when its argument is
true, and zero otherwise. We use $\mathds{R}$, $\mathds{Z}$ and $\mathds{N}$
to denote the set of all real numbers, all integers and all positive integers,
respectively. Let $\mathds{Z}_{+}=\mathds{N}\cup\{0\}$.

\section{The Cross-Quantilogram}

Let $\{(\mathbf{y}_{t},\mathbf{x}_{t}):t\in\mathds{Z}\}$ be a strictly
stationary time series with $\mathbf{y}_{t}=(y_{1t},y_{2t})^{\top}%
\in\mathds{R}^{2}$ and $\mathbf{x}_{t}=(x_{1t},x_{2t})\in\mathds{R}^{d_{1}%
}\times\mathds{R}^{d_{2}}$, where $x_{it}=[x_{it}^{(1)},\dots,x_{it}^{(d_{i}%
)}]^{\top}\in\mathds{R}^{d_{i}}$ with $d_{i}\in\mathds{N}$ for $i=1,2$. We use
$F_{y_{i}|x_{i}}(\cdot|x_{it})$ to denote the conditional distribution
function of the series $y_{it}$ given $x_{it}$ with density function
$f_{y_{i}|x_{i}}(\cdot|x_{it})$, and the corresponding conditional quantile
function is defined as $q_{i,t}(\tau_{i})=\inf\{v:F_{y_{i}|x_{i}}%
(v|x_{it})\geq\tau_{i}\}$ for $\tau_{i}\in(0,1),$ for $i=1,2$. Let
$\mathcal{T}$ be the range of quantiles we are interested in evaluating the
directional predictability. For simplicity, we assume that $\mathcal{T}$ is a
Cartesian product of\ two closed intervals in $(0,1)$, that is $\mathcal{T}%
\equiv\mathcal{T}_{1}\times\mathcal{T}_{2},\ $where $\mathcal{T}%
_{i}=[\underline{\tau}_{i},\overline{\tau}_{i}]$ for some $0<\underline{\tau
}_{i}<\overline{\tau}_{i}<1$.\footnote{It is straightforward to extend the
results to a more general case, e.g. the case for which $\mathcal{T}$ is the
union of a finite number of disjoint closed subsets of $(0,1)^{2}$.}

We consider a measure of serial dependence between two events $\{y_{1t}\leq
q_{1,t}(\tau_{1})\}$ and $\{y_{2,t-k}\leq q_{2,t-k}(\tau_{2})\}$ for an
arbitrary pair of $\tau=(\tau_{1},\tau_{2})^{\top}\in\mathcal{T}$ and for an
integer $k$. In the literature, $\{1[y_{it}\leq q_{i,t}(\cdot)]\}$ is called
the quantile-hit or quantile-exceedance process for $i=1,2$. The
cross-quantilogram is defined as the cross-correlation of the quantile-hit
processes
\begin{equation}
\rho_{\tau}(k)=\frac{E\left[  \psi_{\tau_{1}}(y_{1t}-q_{1,t}(\tau_{1}%
))\psi_{\tau_{2}}(y_{2,t-k}-q_{2,t-k}(\tau_{2}))\right]  }{\sqrt{E\left[
\psi_{\tau_{1}}^{2}(y_{1t}-q_{1,t}(\tau_{1}))\right]  }\sqrt{E\left[
\psi_{\tau_{2}}^{2}(y_{2,t-k}-q_{2,t-k}(\tau_{2}))\right]  }}, \label{q1}%
\end{equation}
for $k=0,\pm1,\pm2,\dots,$ where $\psi_{a}(u)\equiv1[u<0]-a$. The
cross-quantilogram captures serial dependence between the two series at
different conditional quantile levels. In the special case of a single time
series, the cross-quantilogram becomes the quantilogram proposed by Linton and
Whang (2007). Note that it is well-defined even for processes $\{(y_{1t}%
,y_{2t})\}_{t\in\mathds{N}}$ with infinite moments. Like the quantilogram, the
cross-quantilogram is invariant to any strictly monotonic transformation
applied to both series, such as the logarithmic transformation.\footnote{When
one is interested in measuring serial dependence between two events
$\{q_{1,t}(\tau_{1}^{l})\leq y_{1t}\leq q_{1,t}(\tau_{1}^{h})\}$ and
$\{q_{2,t-k}(\tau_{2}^{l})\leq y_{2,t-k}\leq q_{2,t-k}(\tau_{2}^{h})\}$ for
arbitrary $\left[  \tau_{1}^{l},\tau_{1}^{h}\right]  $ and $\left[  \tau
_{2}^{l},\tau_{2}^{h}\right]  $, one can use an alternative version of the
cross-quantilogram that is defined by replacing $\psi_{\tau_{i}}%
(y_{it}-q_{i,t}(\tau_{i}))$ in (\ref{q1}) with%
\[
\psi_{\left[  \tau_{i}^{l},\tau_{i}^{h}\right]  }(y_{it}-q_{i,t}(\left[
\tau_{i}^{l},\tau_{i}^{h}\right]  ))=1[q_{i,t}(\tau_{i}^{l})<y_{it}%
<q_{i,t}\left(  \tau_{i}^{h}\right)  ]-\left(  \tau_{i}^{h}-\tau_{i}%
^{l}\right)  .
\]
For example, if $\tau_{1}=\left[  0.9,1.0\right]  $ and $\tau_{2}=\left[
0.4,0.6\right]  ,$ the alternative version measures dependence between an
event that $y_{1t}$ is in a high range and an event that $y_{2,t-k}$ is in a
mid-range. In some cases, such an alternative version could be easier to
interpret and therefore be useful. The inference procedure provided in this
paper is also valid for the alternative version of the cross-quantilogram. See
the working paper version of this paper for an empirical application using the
alternative version.}

To construct the sample analogue of the cross-quantilogram based on
observations $\{(\mathbf{y}_{t},\mathbf{x}_{t})\}_{t=1}^{T}$, we first
estimate conditional quantile functions. In this paper, we consider the linear
quantile regression model proposed by Koenker and Bassett (1978) for
simplicity and let $q_{i,t}(\tau_{i})=x_{it}^{\top}\beta_{i}(\tau_{i})$ with a
$d_{i}\times1$ vector of unknown parameters $\beta_{i}(\tau_{i})$ for $i=1,2$.
To estimate the parameters $\beta(\tau)\equiv\lbrack\beta_{1}(\tau_{1})^{\top
},\beta_{2}(\tau_{2})^{\top}]^{\top}$, we separately solve the following
minimization problems:
\[
\hat{\beta}_{i}(\tau_{i})=\arg\min_{\beta_{i}\in\mathds{R}^{d_{i}}}\sum
_{t=1}^{T}\varrho_{\tau_{i}}\left(  y_{it}-x_{it}^{\top}\beta_{i}\right)  ,
\]
where $\varrho_{a}(u)\equiv u(a-1[u<0])$. Let $\hat{\beta}(\tau)\equiv
\lbrack\hat{\beta}_{1}(\tau_{1})^{\top},\hat{\beta}_{2}(\tau_{2})^{\top
}]^{\top}$ and $\hat{q}_{i,t}(\tau_{i})=x_{it}^{\top}\hat{\beta}_{i}(\tau
_{i})$ for i = 1,2. The sample cross-quantilogram is defined by
\begin{equation}
\hat{\rho}_{\tau}(k)=\frac{\sum_{t=k+1}^{T}\psi_{\tau_{1}}(y_{1t}-\hat
{q}_{1,t}(\tau_{1}))\psi_{\tau_{2}}(y_{2,t-k}-\hat{q}_{2,t-k}(\tau_{2}%
))}{\sqrt{\sum_{t=k+1}^{T}\psi_{\tau_{1}}^{2}(y_{1t}-\hat{q}_{1,t}(\tau_{1}%
))}\sqrt{\sum_{t=k+1}^{T}\psi_{\tau_{2}}^{2}(y_{2,t-k}-\hat{q}_{2,t-k}%
(\tau_{2}))}}, \label{q2}%
\end{equation}
for $k=0,\pm1,\pm2,\dots$. Given a set of conditional quantiles, the
cross-quantilogram considers dependence in terms of the direction of deviation
from conditional quantiles and thus measures the directional predictability
from one series to another. This can be\ a useful descriptive device. By
construction, $\hat{\rho}_{\tau}(k)\in\lbrack-1,1]$ with $\hat{\rho}_{\tau
}(k)=0$ corresponding to the case of no directional predictability. The form
of the statistic generalizes to the $l$ dimensional multivariate case and the
$(i,j)$th entry of the corresponding cross-correlation matrices $\Gamma
_{\bar{\tau}}(k)$ is given by applying (\ref{q2}) for a pair of variables
$(y_{it},x_{it})$ and $(y_{jt-k},x_{jt-k})$ and a pair of conditional
quantiles $(\hat{q}_{i,t}(\tau_{i}),\hat{q}_{j,t-k}(\tau_{j})))$ for
$\bar{\tau}=(\tau_{1},\dots,\tau_{l})^{^{\top}}$. The cross-correlation
matrices possess the usual symmetry property $\Gamma_{\bar{\tau}}%
(k)=\Gamma_{\bar{\tau}}(-k)^{^{\top}}$ when $\tau_{1}=\cdots=\tau_{d}.$

Suppose that $\tau\in\mathcal{T\ }$\ and $p$\ are given. One may be interested
in testing the null hypothesis $H_{0}:\rho_{\tau}(1)=\dots=\rho_{\tau}(p)=0\ $
against the alternative hypothesis that $\rho_{\tau}(k)\not =0$ for some
$k\in\{1,\dots,p\}$. This is a test for the directional predictability of
events up to $p$ lags $\{y_{2,t-k}\leq q_{2,t-k}(\tau_{2}):k=1,\dots,p\}$ for
$\{y_{1t}\leq q_{1,t}(\tau_{1})\}.$ For this hypothesis, we can use the
Box-Pierce type statistic $\hat{Q}_{\tau}^{(p)}=T\sum_{k=1}^{p}\hat{\rho
}_{\tau}^{2}(k)$. In practice, we recommend to use the Box-Ljung version
$\check{Q}_{\tau}^{(p)}\equiv T(T+2)\sum_{k=1}^{p}\hat{\rho}_{\tau}%
^{2}(k)/(T-k)$ which had small sample improvements in our simulations.

On the other hand, one may be interested in testing a stronger null
hypothesis, i.e. the absence of directional predictability over a set of
quantiles: $H_{0}:\rho_{\tau}(1)=\dots=\rho_{\tau}(p)=0,$ $\forall\tau
\in\mathcal{T},$ against the alternative hypothesis that $\rho_{\tau}%
(k)\not =0$ for some $(k,\tau)\in\{1,\dots,p\}\times\mathcal{T}$ with $p$
fixed. In this case, we can use the sup-version test statistic
\[
\sup_{\tau\in\mathcal{T}}\hat{Q}_{\tau}^{(p)}=\sup_{\tau\in\mathcal{T}}%
T\sum_{k=1}^{p}\hat{\rho}_{\tau}^{2}(k).
\]
Note that the portmanteau test statistic $\hat{Q}_{\tau}^{(p)}$ for a specific
quantile is a special case of the sup-version test statistic.

\section{Asymptotic Properties}

We next present the asymptotic properties of the sample cross-quantilogram and
related test statistics. Since these quantities contain non-smooth functions,
we employ techniques widely used in the literature on quantile regression, see
Koenker and Bassett (1978) and Pollard (1991) among others.

Define $\mathbf{y}_{t,k}=(y_{1t},y_{2,t-k})^{\top}$, $\mathbf{x}_{t,k}%
=(x_{1t},x_{2,t-k})$, $\mathbf{q}_{t,k}(\tau)=[q_{1,t}(\tau_{1}),q_{2,t-k}%
(\tau_{2})]^{\top}$ and $\hat{\mathbf{q}}_{t,k}(\tau)=[\hat{q}_{1,t}(\tau
_{1}),\hat{q}_{2,t-k}(\tau_{2})]^{\top}$ and let $\{\mathbf{y}_{t,k}%
\leq\mathbf{q}_{t,k}(\tau)\}=\{y_{1t}\leq q_{1}(\tau_{1}|x_{1t}),y_{2,t-k}\leq
q_{2}(\tau_{2}|x_{2t-k})\}$ and $F_{\mathbf{y}|\mathbf{x}}^{(k)}%
(\cdot|\mathbf{x}_{t,k})=P(\mathbf{y}_{t,k}\leq\cdot|\mathbf{x}_{t,k})$ for
$t=k+1,\dots,T$ and for some finite integer $k>0$. We use $\nabla G^{(k)}%
(\tau)$ to denote $\partial/\partial\mathbf{v}E[F_{\mathbf{y}|\mathbf{x}%
}^{(k)}(\mathbf{v}_{t,k}|\mathbf{x}_{t,k})]$ evaluated at $\mathbf{v}%
_{t,k}=\mathbf{q}_{t,k}(\tau)$, where $\mathbf{v}_{t,k}=[x_{1t}^{\top}%
v_{1},x_{2,t-k}^{\top}v_{2}]^{\top}$ for $v_{i}\in\mathds{R}^{d_{i}}$
($i=1,2$). Let $d_{0}=1+d_{1}+d_{2}$.

\vspace{0.5cm} \noindent\textbf{Assumption}

\begin{description}
\item[A1.] $\{(\mathbf{y}_{t},\mathbf{x}_{t})\}_{t\in\mathds{Z}}$ is strictly
stationary and strong mixing with coefficients $\{\alpha_{j}\}_{j\in
\mathds{Z}_{+}}$ that satisfy $\sum_{j=0}^{\infty}(j+1)^{2s-2}\alpha_{j}%
^{\nu/(2s+\nu)}<\infty$ for some integer $s\geq3$ and $\nu\in(0,1)$. For each
$i=1,2$, $E|x_{it}^{(j)}|^{2s+\nu}<\infty$ for all $j=1,\dots,d_{i}$, given
$x_{it}=[x_{it}^{(1)},\dots,x_{it}^{(d_{i})}]^{\top}$.

\item[A2.] The conditional distribution function $F_{y_{i}|x_{i}}(\cdot
|x_{it})$ has continuous densities $f_{y_{i}|x_{i}}(\cdot|x_{it})$, which is
uniformly bounded away from 0 and $\infty$ at $q_{i,t}(\tau_{i})$ uniformly
over $\tau_{i}\in\mathcal{T}_{i}$, for $i=1,2$ and for all $t\in\mathds{Z}$.

\item[A3.] For any $\epsilon>0$ there exists a $\nu(\epsilon)$ such that
$\sup_{\tau_{i}\in\mathcal{T}_{i}}\sup_{s:|s|\leq\nu(\epsilon)}|f_{y_{i}%
|x_{i}}(q_{i,t}(\tau_{i})|x_{it})-f_{y_{i}|x_{i}}(q_{i,t}(\tau_{i}%
)+s|x_{it})|<\epsilon$ for $i=1,2$ and for all $t\in\mathds{Z}.$

\item[A4.] For every $k\in\{1,\dots,p\}$, the conditional joint distribution
$F_{\mathbf{y}|\mathbf{x}}^{(k)}(\cdot|\mathbf{x}_{t,k})$ has the conditional
density $f_{\mathbf{y}|\mathbf{x}}^{(k)}(\cdot|\mathbf{x_{t,k}})$, which is
bounded uniformly in the neighborhood of quantiles of interest, and also has a
bounded, continuous first derivative for each argument uniformly in the
neighborhood of quantiles of interest and thus $\nabla G^{(k)}(\tau)$ exists
over $\tau\in\mathcal{T}$.

\item[A5.] For each $i=1,2$, there exist positive definite matrices $M_{i}$
and $D_{i}(\tau_{i})$ such that (a) plim$_{T\rightarrow\infty}T^{-1}\sum
_{t=1}^{T}x_{it}x_{it}^{\top}=M_{i}$ and (b) plim$_{T\rightarrow\infty}%
T^{-1}\sum_{t=1}^{T}f_{y_{i}|x_{i}}(q_{i,t}(\tau_{i})|x_{it})x_{it}%
x_{it}^{\top}=D_{i}(\tau_{i})$ uniformly in $\tau_{i}\in\mathcal{T}_{i}$.
\end{description}

Assumption A1 imposes the mixing rate used in Andrews and Pollard (1994) and a
moment condition on regressors, while allowing for the dependent variables to
be processes with infinite moments. For a strong mixing process, $\rho_{\tau
}(k)\rightarrow0$ as $k\rightarrow\infty$ for all $\tau\in(0,1).$ Assumption
A2 ensures that the conditional quantile function given $x_{it}$ is uniquely
defined while allowing for dynamic misspecification, or $P(y_{it}\leq
q_{i,t}(\tau_{i})|\mathcal{F}_{it})\not =\tau_{i}$ given some information set
$\mathcal{F}_{it}$ containing all \textquotedblleft relevant\textquotedblright%
\ information available at $t$ for $i=1,2$. In the absence of dynamic
misspecification, which is assumed in Hong et al. (2009) under their null
hypothesis, the analysis becomes substantially simple because each hit-process
$\{\psi_{\tau_{i}}(y-q_{i,t}(\tau_{i}))\}$ is a sequence of iid Bernoulli
random variables. As Corradi and Swanson (2006) discuss, however, results
under correct dynamic specification crucially rely on an appropriate choice of
the information set; specification search for the information set based on
pre-testing may have a nontrivial impact on inference. Thus, Assumption A2 is
appropriate for the purpose of testing directional predictability given a
particular information set $x_{it}$. Assumption A3 implies that the densities
are smooth in some neighborhood of the quantiles of interest. Assumption A4
ensures that the joint distribution of $(x_{1t},x_{2t-k})$ is continuously
differentiable. Assumption A5 is standard in the quantile regression literature.

To describe the asymptotic behavior of the cross-quantilogram, we define a set
of $d_{0}$-dimensional mean-zero Gaussian process $\{\mathbb{B}_{k}(\tau
):\tau\in\lbrack0,1]^{2}\}_{k=1}^{p}$ with covariance-matrix function for
$k,k^{\prime}\in\{1,\dots,p\}$ and for $\tau,\tau^{\prime}\in\mathcal{T}$,
given by
\[
\Xi_{kk^{\prime}}(\tau,\tau^{\prime})\equiv E[\mathbb{B}_{k}(\tau
)\mathbb{B}_{k^{\prime}}^{^{\top}}(\tau^{\prime})]=\sum_{l=-\infty}^{\infty
}\mathrm{cov}\left(  \xi_{l,k}(\tau),\xi_{0,k^{\prime}}^{^{\top}}(\tau
^{\prime})\right)  ,
\]
where $\xi_{t,k}(\tau)=(1[\mathbf{y_{t,k}\leq q_{t,k}(\tau)}],x_{1t}^{\top
}1[y_{1t}\leq q_{1,t}(\tau_{1})],x_{2t}^{\top}1[y_{2t}\leq q_{2,t}(\tau
_{2})])^{\top}$ for $t\in\mathds{Z}$. Define $\mathbb{B}^{(p)}(\tau
)=[\mathbb{B}_{1}(\tau)^{^{\top}},\dots,\mathbb{B}_{p}(\tau)^{^{\top}%
}]^{^{\top}}$ as the $d_{0}p$-dimensional zero-mean Gaussian process with the
covariance-matrix function denoted by $\Xi^{(p)}(\tau,\tau^{\prime})$ for
$\tau,\tau^{\prime}\in\mathcal{T}$. We use $\ell^{\infty}(\mathcal{T})$ to
denote the space of all bounded functions on $\mathcal{T}$ equipped with the
uniform topology and $(\ell^{\infty}(\mathcal{T}))^{p}$ to denote the
$p$-product space of $\ell^{\infty}(\mathcal{T})$ equipped with the product
topology. Let the notation \textquotedblleft$\Rightarrow$\textquotedblright%
\ denote the weak convergence due to Hoffman-Jorgensen in order to handle the
measurability issues, although outer probabilities and expectations are not
used explicitly in this paper for notational simplicity. See Chapter 1 of van
der Vaart and Wellner (1996) for a comprehensive treatment of weak convergence
in non-separable metric spaces.

The next theorem establishes the asymptotic properties of the cross-quantilogram.

\begin{theorem}
\label{theorem:lim-p} Suppose that Assumptions A1-A5 hold for some finite
integer $p>0.$ Then, in the sense of weak convergence of the stochastic
process in $(\ell^{\infty}(\mathcal{T}) )^{p}$ we have:
\begin{equation}
\noindent\sqrt{T}\left(  \hat{\rho}_{\tau}^{(p)}-\rho_{\tau}^{(p)}\right)
\Rightarrow\Lambda_{\tau}^{(p)}\mathbb{B}^{(p)}(\tau), \label{th1}%
\end{equation}
where $\hat{\rho}_{\tau}^{(p)}\equiv\lbrack\hat{\rho}_{\tau}(1),\dots
,\hat{\rho}_{\tau}(p)]^{^{\top}}$ and $\Lambda_{\tau}^{(p)}=\mathrm{diag}%
(\lambda_{\tau1}^{^{\top}},\dots,\lambda_{\tau p}^{^{\top}})$ with
\begin{equation}
\lambda_{\tau, k}=\frac{1}{\sqrt{\tau_{1}(1-\tau_{1})\tau_{2}(1-\tau_{2})}%
}\left[
\begin{array}
[c]{c}%
1\\
-\nabla G^{(k)}(\tau)[D_{1}^{-1}(\tau_{1}),D_{2}^{-1}(\tau_{2})]^{\top}%
\end{array}
\right]  . \label{eq:lambda-def}%
\end{equation}

\end{theorem}

\vspace{0.5cm}

Under the null hypothesis that $\rho_{\tau}(1)=\cdots=\rho_{\tau}(p)=0$ for
every $\tau\in\mathcal{T}$ , it follows that
\begin{equation}
\sup_{\tau\in\mathcal{T}}\hat{Q}_{\tau}^{(p)}\Rightarrow\sup_{\tau
\in\mathcal{T}}\Vert\Lambda_{\tau}^{(p)}\mathbb{B}^{(p)}(\tau)\Vert^{2},
\label{Qd}%
\end{equation}
by the continuous mapping theorem.

\subsection{Inference Methods}

\subsubsection{The Stationary Bootstrap}

The asymptotic null distribution presented in Theorem \ref{theorem:lim-p}
depends on nuisance parameters. We suggest to estimate the critical values by
the stationary bootstrap of Politis and Romano (1994). The stationary
bootstrap is a block bootstrap method with blocks of random lengths. The
stationary bootstrap resample is strictly stationary conditional on the
original sample.

Let $\{L_{i}\}_{i\in\mathds{N}}$ denote a sequence of iid random block lengths
having the geometric distribution with a scalar parameter $\gamma\equiv
\gamma_{T}\in(0,1)$: $P^{\ast}(L_{i}=l)=\gamma(1-\gamma)^{l-1}$ for each
positive integer $l$, where $P^{\ast}$ denotes the conditional probability
given the original sample. We assume that the parameter $\gamma$ satisfies the
following growth condition:

\vspace{0.5cm} \noindent\textbf{Assumption A6.} $T^{\nu/2(2s+\nu)(s-1)}%
\gamma+(\sqrt{T}\gamma)^{-1}\rightarrow0 $ as $T\rightarrow\infty$, where $s$
and $\nu$ are defined in Assumption A1.\vspace{0.5cm}

We need the condition that $\gamma=o(T^{-\nu/2(2s+\nu)(s-1) })$ for the
purpose of establishing uniform convergence over the subset $\mathcal{T}$ of
$\left[  0,1\right]  ^{2},$ given the moment conditions on regressors under
Assumption A1. This condition can be relaxed when regressors are uniformly
bounded because $\gamma= o(1)$ when $s = \infty$.

Let $\{K_{i}\}_{i\in\mathds{N}}$ be a sequence of iid random variables, which
have the discrete uniform distribution on $\{k+1,\dots,T\}$ and are
independent of both the original data and $\{L_{i}\}_{i\in\mathds{N}}$. We set
$B_{K_{i},L_{i}}=\{(\mathbf{y}_{t,k},\mathbf{x}_{t,k})\}_{t=K_{i}}%
^{K_{i}+L_{i}-1}$ representing the blocks of length $L_{i}$ starting with the
$K_{i}$-th pair of observations. The stationary bootstrap procedure generates
the bootstrap samples $\{(\mathbf{y}_{t,k}^{\ast},\mathbf{x}_{t,k}^{\ast
})\}_{t=k+1}^{T}$ by taking the first $(T-k)$ observations from a sequence of
the resampled blocks $\{B_{K_{i},L_{i}}\}_{i\in\mathds{N}}$. In this notation,
when $t>T$, $(\mathbf{y}_{t,k},\mathbf{x}_{t,k})$ is set to be $(\mathbf{y}%
_{jk},\mathbf{x}_{jk})$, where $j=k+(t\ \mathrm{mod}\ (T-k))$ and
$(\mathbf{y}_{k,k},\mathbf{x}_{k,k})=(\mathbf{y}_{t,k},\mathbf{x}_{t,k})$,
where mod denotes the modulo operator.\footnote{For any positive integers $a$
and $b$, the modulo operation $a\ \mathrm{mod}\ b$ is equal to the remainder,
on division of $a$ by $b$.}

Using the stationary bootstrap resample, we estimate the parameter $\beta
(\tau)$ by solving the minimization problem:
\[
\hat{\beta}_{1}^{\ast}(\tau_{1})=\arg\min_{\beta_{1}\in\mathds{R}^{d_{1}}}%
\sum_{t=k+1}^{T}\varrho_{\tau_{1}}(y_{1t}^{\ast}-x_{1t}^{\ast\top}\beta
_{1})\ \ \mathrm{and}\ \ \hat{\beta}_{2}^{\ast}(\tau_{2})=\arg\min_{\beta
_{2}\in\mathds{R}^{d_{2}}}\sum_{t=1}^{T-k}\varrho_{\tau_{2}}(y_{2t}^{\ast
}-x_{2t}^{\ast\top}\beta_{2}).
\]
Then the conditional quantile function given the stationary bootstrap
resample, $q_{i,t}^{\ast}(\tau_{i})\equiv x_{it}^{\ast\top}\beta_{i}(\tau
_{i})$, is estimated by $\hat{q}_{i,t}^{\ast}(\tau_{i})\equiv x_{it}^{\ast
\top}\hat{\beta}_{i}^{\ast}(\tau_{i})$ for each $i=1,2$. Define $\hat{\beta
}^{\ast}(\tau)=[\hat{\beta}_{1}^{\ast\top}(\tau_{1}),\hat{\beta}_{2}^{\ast
\top}(\tau_{2})]^{\top}$ and let $\hat{\mathbf{q}}_{t,k}^{\ast}(\tau)=[\hat
{q}_{1,t}^{\ast}(\tau_{1}),\hat{q}_{2,t-k}^{\ast}(\tau_{2})]^{\top}$ and
$\mathbf{q}_{t,k}^{\ast}(\tau)=[q_{1,t}^{\ast}(\tau_{1}),q_{2,t-k}^{\ast}%
(\tau_{2})]^{\top}$. We construct $\hat{\beta}^{\ast}(\tau)$ by using $(T-k)$
bootstrap observations, while $\hat{\beta}(\tau)$ is based on $T$
observations, but the difference of sample sizes is asymptotically negligible
given the finite lag order $k$.

The cross-quantilogram based on the stationary bootstrap resample is defined
as follows:
\begin{align*}
\hat{\rho}_{\tau}^{\ast}(k) = \frac{ \sum_{t=k+1}^{T} \psi_{\tau_{1}}%
(y_{1t}^{\ast} - \hat{q}_{1,t}^{\ast}(\tau_{1}) ) \psi_{\tau_{2}}%
(y_{2,t-k}^{\ast} - \hat{q}_{2,t-k}^{\ast}(\tau_{2}) ) }{ \sqrt{ \sum
_{t=k+1}^{T} \psi_{\tau_{1}}^{2}(y_{1t}^{\ast} - \hat{q}_{1,t}^{\ast}(\tau
_{1})) } \sqrt{ \sum_{t=k+1}^{T} \psi_{\tau_{2}}^{2}(y_{2,t-k}^{\ast} -
\hat{q}_{2,t-k}^{\ast}(\tau_{2})) } }.
\end{align*}

We consider the stationary bootstrap to construct a confidence interval for
each statistic of $p$ cross-quantilograms $\{\hat{\rho}_{\tau}(1),\dots
,\hat{\rho}_{\tau}(p)\}$ for a finite positive integer $p$ and subsequently
construct a confidence interval for the omnibus test based on the $p$
statistics. To maintain the original dependence structure, we use $(T-p)$
pairs of observations $\{[(\mathbf{y}_{t,1},\mathbf{x}_{t,1}),\dots
,(\mathbf{y}_{t,p},\mathbf{x}_{t,p})]\}_{t=p+1}^{T}$ to resample the blocks of
random lengths.

Given a vector cross-quantilogram $\hat{\rho}_{\tau}^{(p)\ast}$, we define the
omnibus test based on the stationary bootstrap resample as $\hat{Q}_{\tau
}^{(p)\ast}=T(\hat{\rho}_{\tau}^{(p)\ast}-\hat{\rho}_{\tau}^{(p)})^{^{\top}%
}(\hat{\rho}_{\tau}^{(p)\ast}-\hat{\rho}_{\tau}^{(p)})$. The following theorem
shows the validity of the stationary bootstrap procedure for the
cross-quantilogram. We use the concept of weak convergence in probability
conditional on the original sample, which is denoted by \textquotedblleft%
$\Rightarrow^{\ast}$", see van der Vaart and Wellner (1996, p.~181).

\begin{theorem}
\label{theorem:boostrap validity} Suppose that Assumption A1-A6 hold. Then, in
the sense of weak convergence conditional on the sample we have:

\noindent(a) $\sqrt{T}\left(  \hat{\rho}_{\tau}^{(p)\ast}-\hat{\rho}_{\tau
}^{(p)}\right)  \Rightarrow^{\ast}\Lambda_{\tau}^{(p)}\mathbb{B}^{(p)}(\tau)$
\ \ in probability;

\noindent(b) Under the null hypothesis that $\rho_{\tau}(1)=\cdots=\rho_{\tau
}(p)=0$ for every $\tau\in\mathcal{T}$,
\[
\sup_{z\in\mathds{R}}\left\vert P^{\ast}\left(  \sup_{\tau\in\mathcal{T}}%
\hat{Q}_{\tau}^{(p)\ast}\leq z\right)  -P\left(  \sup_{\tau\in\mathcal{T}}%
\hat{Q}_{\tau}^{(p)}\leq z\right)  \right\vert \rightarrow^{p}0.
\]

\end{theorem}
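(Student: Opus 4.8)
The plan is to mirror the structure of the proof of Theorem \ref{theorem:lim-p}, but working conditionally on the sample, and to invoke the known properties of the stationary bootstrap for the three building blocks: the empirical process of quantile hits, the bootstrap quantile estimates, and the bootstrap estimates of the nuisance quantities $f_i(q_i(\alpha_i))$ and $\nabla G_k(q_\alpha)$. First I would establish a bootstrap functional central limit theorem for the $3p$-dimensional partial-hit process: conditional on the sample, $T^{-1/2}\sum_{t=p+1}^{T}(\xi_t^{\ast}(\alpha,k)-\bar\xi_t(\alpha,k))$ converges weakly, in probability, to $\mathbb{B}^{(p)}(\alpha)$ as a process in $\alpha\in\mathcal{A}$. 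This is where the growth condition A5, $\gamma+(\sqrt{T}\gamma)^{-1}\to0$, enters: it guarantees that the SB reproduces the long-run covariance $\Gamma^{(p)}(\alpha,\beta)$ in the limit (the $\gamma^{-1}$ term controls the bias from broken blocks, the $\gamma$ term controls the variance inflation), so the limiting Gaussian process is exactly the same as in Theorem \ref{theorem:lim-p}. For the stochastic-equicontinuity part I would appeal to the bracketing/chaining arguments available for stationary-bootstrapped empirical processes under the mixing rate A1 (e.g.\ the functional limit results of Politis and Romano (1994) combined with mixing-process empirical process bounds), using A2--A4 to control the bracketing entropy of the indicator classes indexed by $\alpha$.

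Second I would handle the bootstrap quantile estimates. Using the convexity of the objective $v\mapsto\sum\pi_{\alpha_i}(x_{it}^{\ast}-v)$ and the standard Knight/Pollard convexity-lemma argument, together with part (a)'s bootstrap FCLT applied to the score process, one obtains the Bahadur-type representation $\sqrt{T}(\hat q_i^{\ast}(\alpha_i)-\hat q_i(\alpha_i)) = f_i(q_i(\alpha_i))^{-1}\,T^{-1/2}\sum_{t}(\psi_{\alpha_i}(x_{it}^{\ast}-q_i(\alpha_i))-\text{centering}) + o_{p^{\ast}}(1)$, uniformly over $\alpha_i\in\mathcal{A}_i$, in probability. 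Plugging these into $\hat\rho_\alpha^{\ast(p)}$ and performing the same linearization as in the proof of Theorem \ref{theorem:lim-p} — a first-order Taylor expansion of the numerator cross-product around the population quantiles, where the derivative terms generate precisely the $\nabla G_k(q_\alpha)$ factors via A4, and a law-of-large-numbers argument for the denominators showing $T^{-1}\sum\psi_{\alpha_i}^{\ast 2}\to\alpha_i(1-\alpha_i)$ conditionally in probability — yields $\sqrt{T}(\hat\rho_\alpha^{(p)\ast}-\hat\rho_\alpha^{(p)})\Rightarrow^{\ast}\Lambda_\alpha^{(p)}\mathbb{B}^{(p)}(\alpha)$ in probability, which is part (a). I would need a consistent bootstrap estimate (or simply the original-sample estimate, whose consistency follows from the proof of Theorem \ref{theorem:lim-p}) for $f_i(q_i(\alpha_i))$ and $\nabla G_k(q_\alpha)$, but since $\Lambda_\alpha^{(p)}$ is a deterministic function of these population quantities and the claim is only that the bootstrapped, recentered statistic has the right limit, the matrix $\Lambda_\alpha^{(p)}$ appears in the limit without needing to be estimated inside the statistic.

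Third, part (b) follows from part (a) by the continuous mapping theorem applied to the functional $G\mapsto\sup_{\alpha\in\mathcal{A}}\|G(\alpha)\|^2$ on $\ell^\infty(\mathcal{A})^{3p}$, exactly as (\ref{Qd}) follows from (\ref{th1}): under $H_0$ we have $\hat\rho_\alpha^{(p)}\to0$ and $\hat Q_\alpha^{(p)\ast}=T\|\hat\rho_\alpha^{(p)\ast}-\hat\rho_\alpha^{(p)}\|^2$, so its conditional law converges to that of $\sup_\alpha\|\Lambda_\alpha^{(p)}\mathbb{B}^{(p)}(\alpha)\|^2$, which by (\ref{Qd}) is the limit law of $\sup_\alpha\hat Q_\alpha^{(p)}$; since this limiting distribution is continuous (a consequence of the anti-concentration of the supremum of a nondegenerate Gaussian process, which holds under A2), the convergence of distribution functions is uniform in $z$, giving the stated Kolmogorov-distance conclusion. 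The main obstacle is the first step: proving the bootstrap FCLT with stochastic equicontinuity uniformly in the quantile index $\alpha$ for the stationary-bootstrapped non-smooth hit process, since the SB breaks the strict stationarity/mixing structure and one must carefully track the block-length randomness when bounding increments of the bootstrap empirical process; the rest is a conditional re-run of the arguments already used for Theorem \ref{theorem:lim-p} plus an application of the continuous mapping theorem.
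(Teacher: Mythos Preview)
Your proposal is correct and follows essentially the same route as the paper: establish a bootstrap functional CLT for the joint empirical process (the paper's Lemma A.1(b), proved via Politis and Romano (1994)), derive a uniform Bahadur representation for the bootstrap quantiles (the paper cites Doss and Gill (1992) rather than the Knight/Pollard convexity argument you suggest, but the two are interchangeable here), linearize $\hat\rho_\alpha^{(p)\ast}-\hat\rho_\alpha^{(p)}$ exactly as in Theorem~\ref{theorem:lim-p} (the paper's Lemma A.3), and then conclude part (b) by the continuous mapping theorem. Your identification of the bootstrap stochastic-equicontinuity step as the crux is accurate; the paper dispatches it in one line by appeal to Politis--Romano.
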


In practice, repeating the stationary bootstrap procedure $B$ times, we obtain
$B$ sets of cross-quantilograms and $\{\hat{\rho}_{\tau,b}^{(p)\ast}%
=[\hat{\rho}_{\tau,b}^{\ast}(1),\dots,\hat{\rho}_{\tau,b}^{\ast}(p)]^{^{\top}%
}\}_{b=1}^{B}$ and $B$ sets of omnibus tests $\{\hat{Q}_{\tau,b}^{(p)\ast
}\}_{b=1}^{B}$ with $\hat{Q}_{\tau,b}^{(p)\ast}=T(\hat{\rho}_{\tau,b}%
^{(p)\ast}-\hat{\rho}_{\tau}^{(p)})^{^{\top}}(\hat{\rho}_{\tau,b}^{(p)\ast
}-\hat{\rho}_{\tau}^{(p)})$. For testing jointly the null of no directional
predictability, a critical value, $c_{Q,\alpha}^{\ast}$, corresponding to a
significance level $\alpha$ is given by the $(1-\alpha)100\%$ percentile of
$B$ test statistics $\{\sup_{\alpha\in\mathcal{T}}\hat{Q}_{\alpha,b}^{(p)\ast
}\}_{b=1}^{B}$, that is,
\[
c_{Q,\alpha}^{\ast}=\inf\left\{  c:P^{\ast}\left(  \sup_{\tau\in\mathcal{T}%
}\hat{Q}_{\tau,b}^{(p)\ast}\leq c\right)  \geq1-\alpha\right\}  .
\]
For the individual cross-quantilogram, we pick up percentiles $(c_{1k,\alpha
}^{\ast},c_{2k,\alpha}^{\ast})$ of the bootstrap distribution of $\{\sqrt
{T}(\hat{\rho}_{\tau,b}^{\ast}(k)-\hat{\rho}_{\tau}(k))\}_{b=1}^{B}$ such that
$P^{\ast}(c_{1k,\alpha}^{\ast}\leq\sqrt{T}(\hat{\rho}_{\tau,b}^{\ast}%
(k)-\hat{\rho}_{\tau}(k))\leq c_{2k,\alpha}^{\ast})=1-\alpha$, in order to
obtain a $100(1-\alpha)\%$ confidence interval for $\rho_{\tau}(k)$ given by
$[\hat{\rho}_{\tau}(k)+T^{-1/2}c_{1k,\alpha}^{\ast},\ \hat{\rho}_{\tau
}(k)+T^{-1/2}c_{2k,\alpha}^{\ast}].$

In the following theorem, we provide a power analysis of the omnibus test
statistic $\sup_{\tau\in\mathcal{T}}\hat{Q}_{\tau}^{(p)}$ when we use a
critical value $c_{Q,\alpha}^{\ast}$. We consider fixed and local
alternatives. The fixed alternative hypothesis against the null of no
directional predictability is
\begin{equation}
H_{1}:\rho_{\tau}(k)\mathrm{\ }\neq0\mathrm{\ for\ some\ }(\tau,k)\in
\mathcal{T}\ \times\{1,\dots,p\}, \label{eq:alt-fix}%
\end{equation}
and the local alternative hypothesis is given by%
\begin{equation}
H_{1T}:\rho_{\tau}(k)=\zeta/\sqrt{T}\ \ \mathrm{for\ some}\ (\tau
,k)\in\mathcal{T}\ \times\{1,\dots,p\}, \label{eq:alt-local}%
\end{equation}
where $\zeta$ is a finite non-zero constant. Thus, under the local
alternative, there exists a $p\times1$ vector $\zeta_{\tau}^{(p)}$ such that
$\rho_{\tau}^{(p)}=T^{-1/2}\zeta_{\tau}^{(p)}$ with $\zeta_{\tau}^{(p)}$
having at least one non-zero element for some $\tau\in\mathcal{T}$.

We consider the asymptotic power of a test for the directional predictability
over a range of quantiles with multiple lags in the following theorem;
however, the results can be applied to test for a specific quantile or a
specific lag order. The following theorem shows that the cross-quantilogram
process has non-trivial local power against the $\sqrt{T}$-local alternatives.

\begin{theorem}
\label{theorem:alternative} Suppose that Assumptions A1-A6 hold. Then: (a)
Under the fixed alternative in (\ref{eq:alt-fix}),%
\[
\lim_{T\rightarrow\infty}P\left(  \sup_{\tau\in\mathcal{T}}\hat{Q}_{\tau
}^{(p)}>c_{Q,\alpha}^{\ast}\right)  \rightarrow1.
\]
(b) Under the local alternative in (\ref{eq:alt-local})%
\[
\lim_{T\rightarrow\infty}P\left(  \sup_{\tau\in\mathcal{T}}\hat{Q}_{\tau
}^{(p)}>c_{Q,\alpha}^{\ast}\right)  =P\left(  \sup_{\tau\in\mathcal{T}}%
\Vert\Lambda_{\tau}^{(p)}\mathbb{B}^{(p)}(\tau)+\zeta_{\tau}^{(p)}\Vert
^{2}\geq c_{Q,\alpha}\right)  ,
\]
where $c_{Q,\alpha}=\inf\{c:P(\sup_{\tau\in\mathcal{T}}\Vert\Lambda_{\tau
}^{(p)}\mathbb{B}^{(p)}(\tau)\Vert^{2}\leq c))\geq1-\alpha\}$.
\end{theorem}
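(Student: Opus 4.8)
The plan is to obtain both parts by combining the large-sample behaviour of the test statistic $\sup_{\alpha\in\mathcal{A}}\hat{Q}_{\alpha}^{(p)}$ under the relevant alternative with the fact that the stationary-bootstrap critical value $c_{Q,\tau}^{\ast}$ converges in probability to the finite constant $c_{Q,\tau}$. This second fact is the common building block and does not use the null: the proof of Theorem \ref{theorem:boostrap validity}(a) only invokes Assumptions A1--A5, so under either alternative we still have $\sqrt{T}(\hat{\rho}_{\alpha}^{(p)\ast}-\hat{\rho}_{\alpha}^{(p)})\Longrightarrow^{\ast}\Lambda_{\alpha}^{(p)}\mathbb{B}^{(p)}(\alpha)$ in probability, as a process in $\alpha$. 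Since $\hat{Q}_{\alpha}^{(p)\ast}=\Vert\sqrt{T}(\hat{\rho}_{\alpha}^{(p)\ast}-\hat{\rho}_{\alpha}^{(p)})\Vert^{2}$, the continuous mapping theorem yields $\sup_{\alpha\in\mathcal{A}}\hat{Q}_{\alpha}^{(p)\ast}\Longrightarrow^{\ast}\sup_{\alpha\in\mathcal{A}}\Vert\Lambda_{\alpha}^{(p)}\mathbb{B}^{(p)}(\alpha)\Vert^{2}$ in probability, and hence the conditional $(1-\tau)$-quantile satisfies $c_{Q,\tau}^{\ast}\rightarrow^{p}c_{Q,\tau}<\infty$, using that the distribution function of the limiting supremum is continuous and strictly increasing at $c_{Q,\tau}$, a consequence of standard results on the continuity of distributions of suprema of functionals of nondegenerate Gaussian processes.

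For part (a), under the fixed alternative fix $\alpha^{\dagger}\in\mathcal{A}$ and $k^{\dagger}\in\{1,\dots,p\}$ with $\rho_{\alpha^{\dagger}}(k^{\dagger})\neq0$. The uniform consistency of $\hat{q}_{i}(\cdot)$ and a uniform law of large numbers for the quantile-hit products --- both obtained en route to Theorem \ref{theorem:lim-p} --- give $\hat{\rho}_{\alpha}^{(p)}\rightarrow^{p}\rho_{\alpha}^{(p)}$ uniformly over $\alpha\in\mathcal{A}$. Hence $\sup_{\alpha\in\mathcal{A}}\hat{Q}_{\alpha}^{(p)}\geq\hat{Q}_{\alpha^{\dagger}}^{(p)}=T\Vert\hat{\rho}_{\alpha^{\dagger}}^{(p)}\Vert^{2}\geq T\hat{\rho}_{\alpha^{\dagger}}^{2}(k^{\dagger})$, and since $\hat{\rho}_{\alpha^{\dagger}}^{2}(k^{\dagger})\rightarrow^{p}\rho_{\alpha^{\dagger}}^{2}(k^{\dagger})>0$ this diverges to $+\infty$ in probability. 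Together with $c_{Q,\tau}^{\ast}=O_{p}(1)$ from the previous paragraph, $P(\sup_{\alpha\in\mathcal{A}}\hat{Q}_{\alpha}^{(p)}>c_{Q,\tau}^{\ast})\rightarrow1$.

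For part (b), under the local alternative $\rho_{\alpha}^{(p)}=T^{-1/2}\zeta_{\alpha}^{(p)}$, so that $\sqrt{T}\hat{\rho}_{\alpha}^{(p)}=\sqrt{T}(\hat{\rho}_{\alpha}^{(p)}-\rho_{\alpha}^{(p)})+\zeta_{\alpha}^{(p)}$. Because the drift is $O(T^{-1/2})$, the Bahadur-type linearization and stochastic equicontinuity underlying Theorem \ref{theorem:lim-p} still apply --- the empirical quantile-hit processes are evaluated only within $O(T^{-1/2})$ of the null configuration and the limiting covariance $\Gamma^{(p)}$ is unaltered by the Pitman drift --- so $\sqrt{T}(\hat{\rho}_{\alpha}^{(p)}-\rho_{\alpha}^{(p)})\Rightarrow\Lambda_{\alpha}^{(p)}\mathbb{B}^{(p)}(\alpha)$ as before, and therefore $\sqrt{T}\hat{\rho}_{\alpha}^{(p)}\Rightarrow\Lambda_{\alpha}^{(p)}\mathbb{B}^{(p)}(\alpha)+\zeta_{\alpha}^{(p)}$ as a process in $\alpha$. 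The continuous mapping theorem then gives $\sup_{\alpha\in\mathcal{A}}\hat{Q}_{\alpha}^{(p)}\Rightarrow\sup_{\alpha\in\mathcal{A}}\Vert\Lambda_{\alpha}^{(p)}\mathbb{B}^{(p)}(\alpha)+\zeta_{\alpha}^{(p)}\Vert^{2}$. The resampling step is insensitive to the $O(T^{-1/2})$ drift, so $c_{Q,\tau}^{\ast}\rightarrow^{p}c_{Q,\tau}$ continues to hold, and a converging-together (Slutsky) argument yields $P(\sup_{\alpha\in\mathcal{A}}\hat{Q}_{\alpha}^{(p)}>c_{Q,\tau}^{\ast})\rightarrow P(\sup_{\alpha\in\mathcal{A}}\Vert\Lambda_{\alpha}^{(p)}\mathbb{B}^{(p)}(\alpha)+\zeta_{\alpha}^{(p)}\Vert^{2}\geq c_{Q,\tau})$, where $>$ is replaced by $\geq$ because the limiting law is continuous at $c_{Q,\tau}$.

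The main obstacle is the step in part (b) asserting that Theorem \ref{theorem:lim-p} and Theorem \ref{theorem:boostrap validity}(a) remain valid under the sequence of local alternatives --- that an $O(T^{-1/2})$ perturbation of $\rho_{\alpha}^{(p)}$ affects neither the Gaussian limit nor its covariance. This is a contiguity / uniform-expansion issue, to be settled by revisiting the proof of Theorem \ref{theorem:lim-p} and checking that all the linearizations and equicontinuity bounds there hold uniformly over $O(T^{-1/2})$-neighbourhoods of the null parameter values. A secondary, more routine point is the continuity of the distribution functions of $\sup_{\alpha\in\mathcal{A}}\Vert\Lambda_{\alpha}^{(p)}\mathbb{B}^{(p)}(\alpha)\Vert^{2}$ and of its shifted version at $c_{Q,\tau}$, needed both for $c_{Q,\tau}^{\ast}\rightarrow^{p}c_{Q,\tau}$ and for passing from weak convergence to convergence of the rejection probability; this follows from standard absolute-continuity arguments for suprema of Gaussian processes.
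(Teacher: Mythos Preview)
Your proposal is correct and follows essentially the same route as the paper: invoke the uniform linearization behind Theorem~\ref{theorem:lim-p} (which does not use the null) to obtain the behaviour of $\sup_{\alpha}\hat{Q}_{\alpha}^{(p)}$ under each alternative, and use Theorem~\ref{theorem:boostrap validity}(a) to show the bootstrap critical value is $O_{p}(1)$ and converges to $c_{Q,\tau}$. The two obstacles you flag --- validity of the expansion along the local-alternative triangular array and continuity of the limiting law at $c_{Q,\tau}$ --- are points the paper's own proof simply asserts without comment, so your argument is if anything more careful than the original.
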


\subsubsection{The Self-Normalized Cross-Quantilogram}

We use recursive estimates to construct a self-normalized cross-quantilogram.
The self-normalized approach was proposed by Lobato (2001) and was recently
extended by Shao (2010) to a class of asymptotically linear test
statistics.\footnote{Kuan and Lee (2006) apply the approach to a class of
specification tests, the so-called $M$ tests, which are based on the moment
conditions involving unknown parameters. Chen and Qu (2015) propose a
procedure for improving the power of the $M$ test, by dividing the original
sample into subsamples before applying the self-normalization procedure.} The
self-normalized approach has a tight link with the fixed-$b$ asymptotic
framework proposed by Kiefer et al. (2000).\footnote{The fixed-$b$ asymptotic
has been further studied by Bunzel et al. (2001), Kiefer and Vogelsang (2002,
2005), Sun et al. (2008), Kim and Sun (2011) and Sun and Kim (2012) among
others.} The self-normalized statistic has an asymptotically pivotal
distribution whose critical values have been tabulated so that there is no
need for long run variance estimation or even bootstrap. As discussed in
section 2.1 of Shao (2010), the self-normalized and the fixed-$b$ approach
have better size properties, compared with the standard approach involving a
consistent asymptotic variance estimator, while it may be asymptotically less
powerful under local alternatives (see Lobato (2001) and Sun et al. (2008) for instance).

Given a subsample $\{(\mathbf{y}_{t},\mathbf{x}_{t})\}_{t=1}^{s}$, we can
estimate sample quantile functions by solving minimization problems
\[
\hat{\beta}_{i,s}(\tau_{i})=\arg\min_{\beta_{i}\in\mathds{R}^{d_{i}}}%
\sum_{t=1}^{s}\varrho_{\tau_{i}}\left(  y_{it}-x_{it}^{\top}\beta_{i}\right)
,
\]
for $i=1,2$. Let $\hat{q}_{i,t,s}(\tau_{i})=x_{it}^{\top}\hat{\beta}%
_{i,s}(\tau_{i})$. We consider the minimum subsample size $s$ larger than
$[T\omega]$, where $\omega\in(0,1)$ is an arbitrary small positive constant.
The trimming parameter, $\omega$, is necessary to guarantee that the quantiles
estimators based on subsamples have standard asymptotic properties and plays a
different role to that of smoothing parameters in long-run variance
estimators. Our simulation study suggests that the performance of the test is
not sensitive to the trimming parameter.

A key ingredient of the self-normalized statistic is an estimate of
cross-correlation based on subsamples:
\[
\hat{\rho}_{\tau,s}(k)=\frac{\sum_{t=k+1}^{s} \psi_{\tau_{1}}(y_{1t}- \hat
{q}_{1,t,s}(\tau_{1})) \psi_{\tau_{2}}(y_{2,t-k}-\hat{q}_{2,t-k,s}(\tau_{2}%
))}{\sqrt{\sum_{t=k+1}^{s}\psi_{\tau_{1}}^{2}(y_{1t}-\hat{q}_{1,t,s}(\tau
_{1}))}\sqrt{\sum_{t=k+1}^{s}\psi_{\tau_{2}}^{2}(y_{2,t-k}-\hat{q}%
_{2,t-k,s}(\tau_{2}))}},
\]
for $[T\omega]\leq s\leq T$. For a finite integer $p>0$, let $\hat{\rho}%
_{\tau,s}^{(p)}=[\hat{\rho}_{\tau,s}(1),\dots,\hat{\rho}_{\tau,s}(p)]^{^{\top
}}$. We construct an outer product of the cross-quantilogram using the
subsample
\[
\hat{V}_{\tau, p}=T^{-2}\sum_{s=[T\omega]}^{T}s^{2}\left(  \hat{\rho}_{\tau
,s}^{(p)}-\hat{\rho}_{\tau}^{(p)}\right)  \left(  \hat{\rho}_{\tau,s}%
^{(p)}-\hat{\rho}_{\tau}^{(p)}\right)  ^{^{\top}}.
\]
We can obtain the asymptotically pivotal distribution using $\hat{V}_{\tau, p}
$ as the asymptotically random normalization. For testing the null of no
directional predictability, we define the self-normalized omnibus test
statistic
\[
\hat{S}_{\tau}^{(p)}=T\hat{\rho}_{\tau}^{(p)^{\top}}\hat{V}_{\tau, p}^{-1}%
\hat{\rho}_{\tau}^{(p)}.
\]
The following theorem shows that $\hat{S}_{\tau}^{(p)}$ is asymptotically
pivotal. To distinguish the process used in the following theorem from the one
used in the previous section, let $\{\bar{\mathbf{B}}^{(p)}(\cdot)\}$ denote a
$p$-dimensional, standard Brownian motion on $( \ell([0,1]) )^{p}$ equipped
with the uniform topology.

\begin{theorem}
\label{theorem:self-norm} Suppose that Assumptions A1-A5 hold. Then, for each
$\tau\in\mathcal{T}$,
\[
\hat{S}_{\tau}^{(p)}\to^{d} \bar{\mathbf{B}}^{(p)}(1)^{^{\top}}\left(
\bar{\mathbf{V}}^{(p)}\right)  ^{-1}\bar{\mathbf{B}}^{(p)}(1),
\]
where $\bar{\mathbf{V}}^{(p)}=\int_{\omega}^{1} \{ \bar{\mathbf{B}}%
^{(p)}(r)-r\bar{\mathbf{B}}^{(p)}(1) \} \{ \bar{\mathbf{B}}^{(p)}%
(r)-r\bar{\mathbf{B}}^{(p)}(1) \} ^{^{\top}}dr$.\bigskip
\end{theorem}

The joint test based on finite multiple quantiles can be constructed in a
similar manner, while the extension of the self-normalized approach to a range
of quantiles is not obvious. The asymptotic null distribution in the above
theorem can be simulated and a critical value, $c_{S,\alpha}$, corresponding
to a significance level $\alpha$ is tabulated by using the $(1-\alpha)100\%$
percentile of the simulated distribution.\footnote{We provide the simulated
critical values in our R package.} In the theorem below, we consider a power
function of the self-normalized omnibus test statistic, $P(\hat{S}_{\tau
}^{(p)}>c_{S,\alpha})$. For a fixed $\tau\in\mathcal{T}$, we consider a fixed
alternative
\begin{equation}
H_{1}:\rho_{\tau}(k)\mathrm{\ }\neq0\mathrm{\ for\ some\ }k\in\{1,\dots,p\},
\label{eq:alt-fix-sub}%
\end{equation}
and a local alternative
\begin{equation}
H_{1T}:\rho_{\tau}(k)=\zeta/\sqrt{T}\ \mathrm{for\ some}\ k\in\{1,\dots,p\},
\label{eq:alt-local-sub}%
\end{equation}
where $\zeta$ is a finite non-zero scalar. This implies that there exists a
$p$-dimensional vector $\zeta_{\tau}^{(p)}$ such that $\rho_{\tau}%
^{(p)}=T^{-1/2}\zeta_{\tau}^{(p)}$ with $\zeta_{\tau}^{(p)}$ having at least
one non-zero element.

\begin{theorem}
\label{theorem:self-norm-power} (a) Suppose that the fixed alternative in
(\ref{eq:alt-fix-sub}) and Assumptions A1-A5 hold. Then,
\[
\lim_{T\rightarrow\infty}P\left(  \hat{S}_{\tau}^{(p)}>c_{S,\alpha}\right)
\rightarrow1.
\]
(b) Suppose that the local alternative in (\ref{eq:alt-local-sub}) is true and
Assumptions A1-A5 hold. Then,
\[
\lim_{T\rightarrow\infty}P\left(  \hat{S}_{\tau}^{(p)}>c_{S,\tau}\right)
=P\left(  \left\{  \bar{\mathbf{B}}^{(p)}(1)+(\Lambda_{\tau}^{(p)}\Delta
_{\tau}^{(p)})^{-1}\zeta_{\tau}^{(p)}\right\}  ^{^{\top}}\left(
\mathbf{V}^{(p)}\right)  ^{-1}\left\{  \bar{\mathbf{B}}^{(p)}(1)+(\Lambda
_{\tau}^{(p)}\Delta_{\tau}^{(p)})^{-1}\zeta_{\tau}^{(p)}\right\}  \geq
c_{S,\alpha}\right)  ,
\]
where $\Delta_{\tau}^{(p)}$ is a $d_{0}p\times d_{0}p$ matrix with
$\Delta_{\tau}^{(p)}(\Delta_{\tau}^{(p)})^{^{\top}}\equiv\Xi^{(p)}(\tau,\tau)$.
\end{theorem}

\section{The Partial Cross-Quantilogram}

We define the partial cross-quantilogram, which measures the relationship
between two events $\{y_{1t}\leq q_{1,t}(\tau_{1})\}$ and $\{y_{2,t-k}\leq
q_{2,t-k}(\tau_{2})\}$, while controlling for intermediate events between $t$
and $t-k$ as well as whether some state variables exceed a given quantile. Let
$\mathbf{z}_{t}\equiv\lbrack\psi_{\tau_{3}}(y_{3t}-q_{3,t}(\tau_{3}%
)),\dots,\psi_{\tau_{l}}(y_{lt}-q_{l,t}(\tau_{l}))]^{^{\top}}$ be an
$(l-2)\times1$ vector for $l\geq3$, where $q_{i,t}(\tau_{i})=x_{it}^{\top
}\beta_{i}(\tau_{i})$ for $\tau_{i}$ and a $d_{i}\times1$ vector $x_{it}$
($i=3,\dots,l$), and $\mathbf{z}_{t}$ may include the quantile-hit processes
based on some of the lagged predicted variables $\{y_{1,t-1},\dots
,y_{1,t-k}\}$, the intermediate predictors $\{y_{2,t-1},\dots,y_{1,t-k-1}\}$
and some state variables that may reflect some historical events up to
$t$.\footnote{In principle, the intermediate predictors and state variables do
not need to be transformed into quantile hits. As emphasized earlier, however,
one of the main advantages of considering qauntile hits is its applicability
to more general time series, being robust to the existence of moments. If
needed, it is straightforward to extend the results here to the case of the
original variables in $\mathbf{z}_{t}$ with additional moment conditions. We
thank an anonymous referee for pointing this out.}

For simplicity, we present the results for a single set of quantiles
$\bar{\tau}=(\tau_{1},\dots,\tau_{l})^{^{\top}}$ and a single lag $k$,
although the results can be extended to the case of a range of quantiles and
multiple lags in an obvious way. To ease the notational burden in the rest of
this section, we consider the case for which a lag $k=0$ without loss of
generality and suppress the dependence on $k$. Let $\bar{\mathbf{y}}%
_{t}=[y_{1t},\dots,y_{lt}]^{\top}$ and $\bar{\mathbf{x}}_{t}=[x_{1t}^{\top
},\dots,x_{lt}^{\top}]^{\top}$.

We introduce the correlation matrix of the hit processes and its inverse
matrix
\[
R_{\bar{\tau}}=E\left[  h_{t}(\bar{\tau})h_{t}(\bar{\tau})^{^{\top}}\right]
\ \ \mathrm{and}\ \ P_{\bar{\tau}}=R_{\bar{\tau}}^{-1},
\]
where an $l\times1$ vector of the hit process is denoted by $h_{t}(\bar{\tau
})=[\psi_{\tau_{1}}(y_{1t}-q_{1,t}(\tau_{1})),\dots,\psi_{\tau_{l}}%
(y_{lt}-q_{l,t}(\tau_{l}))]^{^{\top}}$. For $i,j\in\{1,\dots,l\}$, let
$r_{\bar{\tau},ij}$ and $p_{\bar{\tau},ij}$ be the $(i,j)$ element of
$R_{\bar{\tau}}$ and $P_{\bar{\tau}}$, respectively. Notice that the
cross-quantilogram is $r_{\bar{\tau},12}/\sqrt{r_{\bar{\tau},11}r_{\bar{\tau
},22}},$ and the partial cross-quantilogram is defined as
\[
\rho_{\bar{\tau}|\mathbf{z}}=-\frac{p_{\bar{\tau},12}}{\sqrt{p_{\bar{\tau}%
,11}p_{\bar{\tau},22}}}.
\]
The partial cross-correlation also has a form
\[
\rho_{\bar{\tau}|\mathbf{z}}=\delta\sqrt{\frac{\tau_{1}(1-\tau_{1})}{\tau
_{2}(1-\tau_{2})}},
\]
where $\delta$ is a scalar parameter defined in the following regression:
\[
\psi_{\tau_{1}}(y_{1t}-q_{1,t}(\tau_{1}))=\delta\psi_{\tau_{2}}(y_{2t}%
-q_{2,t}(\tau_{2}))+\gamma^{\top}\mathbf{z}_{t}+u_{t},
\]
with a $(l-2)\times1$ vector $\gamma$ and an error term $u_{t}$. Thus, testing
the null hypothesis of $\rho_{\bar{\tau}|\mathbf{z}}=0$ can be viewed as
testing predictability between two quantile hits with respect to information
$\bar{z}$ as in Granger causality test based on the regression form (Granger,
1969). By choosing relevant variables $\bar{z}$, one can use $\rho_{\bar{\tau
}|\mathbf{z}}$ for the purpose of testing Granger causality (Pierce and Haugh,
1977). See also Hong et al. (2009) for testing Granger causality in tail distribution.

To obtain the sample analogue of the partial cross-quantilogram, we first
construct a vector of hit processes, $\hat{h}_{t}(\bar{\tau})$, by replacing
the population conditional quantiles in $h_{t}(\bar{\tau})$ by the sample
analogues $\{\hat{q}_{1,t}(\tau_{1}),\dots,\hat{q}_{l,t}(\tau_{l})\}$. Then,
we obtain the estimator for the correlation matrix and its inverse as
\[
\hat{R}_{\bar{\tau}}=\frac{1}{T}\sum_{t=1}^{T}\hat{h}_{t}(\bar{\tau})\hat
{h}_{t}(\bar{\tau})^{^{\top}}\ \ \mathrm{and}\ \ \hat{P}_{\bar{\tau}}=\hat
{R}_{\bar{\tau}}^{-1},
\]
which leads to the sample analogue of the partial cross-quantilogram
\begin{equation}
\hat{\rho}_{\bar{\tau}|\mathbf{z}}=-\frac{\hat{p}_{\bar{\tau},12}}{\sqrt
{\hat{p}_{\bar{\tau},11}\hat{p}_{\bar{\tau},22}}}, \label{eq:pcq-sample}%
\end{equation}
where $\hat{p}_{\bar{\tau},ij}$ denotes the $(i,j)$ element of $\hat{P}%
_{\bar{\tau}}$ for $i,j\in\{1,\dots,l\}$.

In Theorem \ref{theorem:pcq} below, we show that $\hat{\rho}_{\bar{\tau
}|\mathbf{z}}$ asymptotically follows a normal distribution, while the
asymptotic variance depends on nuisance parameters as in the previous section.
To address the issue of the nuisance parameters, we may employ the stationary
bootstrap or the self-normalization technique. For the bootstrap, we can use
pairs of variables $\{(\bar{\mathbf{y}}_{t},\bar{\mathbf{x}}_{t})\}_{t=1}^{T}$
to generate the stationary bootstrap resample $\{(\bar{\mathbf{y}}_{t}^{\ast
},\bar{\mathbf{x}}_{t}^{\ast})\}_{t=1}^{T}$ and then obtain the stationary
bootstrap version of the partial cross-quantilogram, denoted by $\hat{\rho
}_{\bar{\tau}|\mathbf{z}}^{\ast}$, using the formula in (\ref{eq:pcq-sample}).
When we use the self-normalized test statistics, we estimate the partial
cross-quantilogram $\rho_{\bar{\tau},s|\mathbf{z}}$ based on the subsample up
to $s$, recursively and then use
\[
\hat{V}_{\bar{\tau}|\mathbf{z}}=T^{-2}\sum_{s=[T\omega]}^{T}s^{2}\left(
\hat{\rho}_{\bar{\tau},s|\mathbf{z}}-\hat{\rho}_{\bar{\tau},T|\mathbf{z}%
}\right)  ^{2},
\]
to normalize the cross-quantilogram, thereby obtaining the asymptotically
pivotal statistics.

To obtain the asymptotic results, we impose the following conditions on the
conditional distribution function $F_{y_{i}|x_{i}}(\cdot|x_{it})$ and its
density function $f_{y_{i}|x_{i}}(\cdot|x_{it})$ of each pair of additional
variables $(y_{it},x_{it})$ for $i=1,\dots,l$ and on the pairwise joint
distribution $F_{ij}(v_{1},v_{2}|x_{it},x_{jt})\equiv P(y_{it}\leq
v_{1},y_{jt}\leq v_{2}|x_{it},x_{jt})$ for $(v_{1},v_{2})\in\mathds{R}^{2}$.

\vspace{0.5cm} \noindent\textbf{Assumption A7.} \textbf{(a)} $\{(\bar
{\mathbf{y}}_{t},\bar{\mathbf{x}}_{t})\}_{t\in\mathds{Z}}$ is a strictly
stationary and strong mixing sequence satisfying the condition in Assumption
A1; \textbf{(b)} The conditions in Assumption A2 and A3 hold for the
$F_{y_{i}|x_{i}}(\cdot|x_{it})$ and $f_{y_{i}|x_{i}}(\cdot|x_{it})$ at the
relevant quantile for $t=1,\dots,T$, for $i=1,\dots,l$; \textbf{(c)}
$F_{ij}(\cdot|x_{it},x_{jt})$ satisfies the condition in Assumption A4 and
there exists a vector $\nabla_{r}G_{ij}\equiv\partial/\partial b_{r}%
E[F_{ij}(x_{it}^{\top}b_{1},x_{jt}^{\top}b_{2}|x_{it},x_{jt})]$ evaluated at
$(b_{1},b_{2})=(\beta_{i}(\tau_{i}),\beta_{i}(\tau_{j}))$ for $(r,i,j)\in
\{1,2\}\times\{1,\dots,l\}^{2}$ ; \textbf{(d)} There exist positive definite
matrices $M_{i}$ and $D_{i}(\tau_{i})$ as in Assumption A5 for $i=1,\dots,l$.
\vspace{0.5cm}

Assumption A7(a) requires the same weak dependence property as in Assumption
A1. Assumptions A7(b)-(c) ensure the smoothness of the marginal conditional
distribution, marginal density function and the joint distribution of each
pair $(y_{it},y_{jt})$ given $(x_{it},x_{jt})$ for $1\leq i,j\leq l$.
Assumption A7(d) is used to derive a Bahadur representation of $\hat{q}%
_{it}(\tau_{i})$ for $i=1,\dots,l$.

We now state the asymptotic properties of the partial cross-quantilogram and
the related inference methods.

\begin{theorem}
\label{theorem:pcq} (a) Suppose that Assumption A7 holds. Then,
\[
\sqrt{T}(\hat{\rho}_{\bar{\tau}|\mathbf{z}}-\rho_{\bar{\tau}|\mathbf{z}%
})\rightarrow^{d}N(0,\sigma_{\bar{\tau}|\mathbf{z}}^{2}),
\]
for each $\bar{\tau}\in\lbrack0,1]^{l}$, where $\sigma_{\bar{\tau}|\mathbf{z}%
}^{2}=\sum_{l=-\infty}^{\infty}\mathrm{cov}(\xi_{\bar{\tau}l},\xi_{\bar{\tau
}0})$ with
\[
\xi_{\bar{\tau}t}=-\sum_{\substack{1\leq i,j\leq l\\i\not =j}}p_{\bar{\tau
},1i}p_{\bar{\tau},2j}\psi_{\tau_{i}}(y_{it}-q_{i,t}(\tau_{i}))\psi_{\tau_{j}%
}(y_{jt}-q_{j,t}(\tau_{j}))+\sum_{i=1}^{l}\lambda_{\bar{\tau}i}^{\top}%
D_{i}(\tau_{i})^{-1}x_{it}\psi_{\tau_{i}}(y_{it}-q_{i,t}(\tau_{i})),
\]
and $\lambda_{\bar{\tau}i}=\sum_{\substack{1\leq j\leq l\\j\not =i}}\left(
p_{\bar{\tau},1i}p_{\bar{\tau},2j}+p_{\bar{\tau},2i}p_{\bar{\tau},1j}\right)
\nabla_{1}G_{ij}$. \newline(b) Suppose that Assumption A6 and A7 hold. Then,
\[
\sup_{s\in\mathds{R}}\left\vert P^{\ast}\left(  \hat{\rho}_{\bar{\tau
}|\mathbf{z}}^{\ast}\leq s\right)  -P\left(  \hat{\rho}_{\bar{\tau}%
|\mathbf{z}}\leq s\right)  \right\vert \rightarrow^{p}0,
\]
for each $\bar{\tau}\in\lbrack0,1]^{l}$. \newline(c) Suppose that Assumption
A7 holds. Then, under the null hypothesis that $\rho_{\bar{\tau}|\mathbf{z}%
}=0$, we have
\[
\frac{\sqrt{T}\hat{\rho}_{\bar{\tau}|\mathbf{z}}}{\hat{V}_{\bar{\tau
}|\mathbf{z}}^{1/2}}\rightarrow^{d}\frac{\mathbf{B}(1)}{\left\{  \int_{\omega
}^{1}\{\mathbf{B}(1)-r\mathbf{B}(r)\}^{2}dr\right\}  ^{1/2}},
\]
for each $\bar{\tau}\in\lbrack0,1]^{l}$.
\end{theorem}

We can show that the partial cross-quantilogram has non-trivial local power
against a sequence of $\sqrt{T}$-local alternatives, applying the similar
arguments used in Theorem \ref{theorem:alternative} and Theorem
\ref{theorem:self-norm-power}, and thus we omit the details.

\section{Monte Carlo Simulation}

We investigate the finite sample performance of our test statistics. We adopt
the following simple VAR model with covariates and consider two data
generating processes for the error terms.%
\begin{align*}
y_{1t}  &  =0.1+0.3y_{1,t-1}+0.2y_{2,t-1}+0.3z_{1t}+u_{1t}\\
y_{2t}  &  =0.1+0.2y_{2,t-1}+0.3z_{2t}+u_{2t},
\end{align*}
where $z_{it}\sim iid$ $\chi^{2}(3)/3$ for $i=1,2$.

\noindent\textbf{DGP1}: $\left(  u_{1t},u_{2t}\right)  ^{\top}\sim iid$
$N\left(  0,I_{2}\right)  $ where $I_{2}$ is a $2\times2$ identity matrix. We
let $\left(  u_{1t},u_{2t},z_{1t},z_{2t}\right)  $ be mutually independent.

\noindent\textbf{DGP2}:
\[
\left(
\begin{array}
[c]{c}%
u_{1t}\\
u_{2t}%
\end{array}
\right)  =\left(
\begin{array}
[c]{cc}%
\sigma_{1t} & 0\\
0 & 1
\end{array}
\right)  \left(
\begin{array}
[c]{c}%
\varepsilon_{1t}\\
\varepsilon_{2t}%
\end{array}
\right)
\]
where $\left(  \varepsilon_{1t},\varepsilon_{2t}\right)  ^{\top}\sim$ $iid$
$N\left(  0,I_{2}\right)  $ and $\sigma_{1t}^{2}=0.1+0.2u_{1,t-1}%
^{2}+0.2\sigma_{1,t-1}^{2}+u_{2,t-1}^{2}.$ We let $\left(  \varepsilon
_{1t},\varepsilon_{2t},z_{1t},z_{2t}\right)  $ be mutually
independent.\bigskip

The sample cross-quantilogram defined in (\ref{q2}) adopts conditional
quantiles $\hat{q}_{it}(\tau_{i})=x_{it}^{\top}\hat{\beta}_{i}(\tau_{i}).$ We
first estimate $\beta(\tau)\equiv\lbrack\beta_{1}(\tau_{1})^{\top},\beta
_{2}(\tau_{2})^{\top}]^{\top}$ by quantile regression of the above VAR\ model,
where $x_{1t}=\left(  1,y_{1,t-1},y_{2,t-1},z_{1t}\right)  ^{\top}$ and
$x_{2t}=\left(  1,y_{2,t-1},z_{2t}\right)  ^{\top}$ and then obtain the sample
cross-quantilogram using $\hat{q}_{it}(\tau_{i})=x_{it}^{\top}\hat{\beta}%
_{i}(\tau_{i}).$

Under DGP1, there is no predictability from the event $\{y_{2,t-k}\leq
q_{2,t-k}(\tau_{2})\}$ to the event $\{y_{1t}\leq q_{1t}(\tau_{1})\}\ $for all
quantiles $\tau_{1}$\ and $\tau_{2},\ $because $\Pr\left[  y_{1t}\leq
q_{1t}(\tau_{1})\ |\ y_{2,t-k},x_{2,t-k}\right]  =\Pr\left[  u_{1t}\leq
\Phi^{-1}(\tau_{1})\right]  =\tau_{1}\ $for all $t\geq1\ $and$\ \tau_{1}%
\in(0,1),\ $where $\Phi$ denotes the standard normal cdf$.$

Under DGP2, $\left(  u_{1t}\right)  $ is defined as the GARCH-X process, where
its conditional variance is the GARCH(1,1) process with an exogenous
covariate. The GARCH-X process is commonly used for modeling volatility of
economic or financial time series in the literature, see Han (2015) and
references therein. Under DGP2, there exists predictability from
$\{y_{2,t-k}\leq q_{2,t-k}(\tau_{2})\}$ to $\{y_{1t}\leq q_{1t}(\tau_{1})\}$
through $\sigma_{1t}^{2}$\ for all quantiles $(\tau_{1}$,$\tau_{2}%
)\in(0,1)^{2},\ $except the case $\tau_{1}=0.5$ because the conditional
distribution of $u_{1t}$ given $x_{1t}\ $is symmetric around $0.\footnote{To
see this, note that the conditional distribution of $u_{1t}$ given $x_{1t}$
has median zero because $\Pr(u_{1t}\leq0\ |\ x_{1t})=\Pr(\sigma_{1t}%
\varepsilon_{1t}\leq0\ |\ x_{1t})=\Pr(\varepsilon_{1t}\leq0\ |\ x_{1t}%
)=\Pr(\varepsilon_{1t}\leq0)=0.5$ and likewise $\Pr(u_{1t}\geq0\ |\ x_{1t}%
)=0.5.\ $Therefore, letting $\mathcal{F}_{t}=(y_{2,t-k},x_{2,t-k}),$%
\ $\Pr\left(  y_{1t}<q_{1,t}(0.5)\ |\ \mathcal{F}_{t}\right)  =\Pr\left(
u_{1t}<0\ |\ \mathcal{F}_{t}\right)  =\Pr\left(  \varepsilon_{1t}%
<0\ |\ \mathcal{F}_{t}\right)  =0.5.$ This implies that there is no
predictability from $\{y_{2,t-k}\leq q_{2,t-k}(\tau_{2})\}$ to $\{y_{1t}\leq
q_{1,t}(\tau_{1})\}$ at $\tau_{1}=0.5$ under DGP2.}$

\subsection{Results Based on the Bootstrap Procedure}

We first examine the finite-sample performance of the Box-Ljung test
statistics based on the stationary bootstrap procedure. To save space, only
the results for the case where $\tau_{1}=\tau_{2}$ are reported here because
the results for the cases where $\tau_{1}\neq\tau_{2}$ are similar. The
Box-Ljung test statistics $\hat{Q}_{\tau}^{(p)}$ are based on $\hat{\rho
}_{\tau}(k)$ for $\tau_{i}=0.05,0.1,0.2,0.3,0.5,0.7,0.8,0.9$ or $0.95$ and
$k=1,2,\ldots,5$. Tables 1 and 2 report empirical rejection frequencies of the
Box-Ljung test statistics based on the bootstrap critical values at the 5\%
level. The sample sizes considered are $T=$500, 1,000 and 2,000$.$ The number
of simulation repetitions is 1,000. The bootstrap critical values are based on
1,000 bootstrapped replicates. The tuning parameter $\gamma$ is set to be
$0.01.\footnote{Recall that $1/\gamma$ indicates the average block length. We
tried different values for $\gamma$ including one chosen by the data dependent
rule suggested by Politis and White (2004) and the results are still similar
particularly for a large sample. The details of the data dependent rule is
explained in Section 6.}$

In general, our simulation results in Tables 1-3 show that the test has
reasonably good size and power performance in finite samples. Table 1 reports
the simulation results for the DGP1, which show the size performance. The
rejection frequencies are close to $0.05$\ in mid quantiles, while the test
tends to slightly under-reject in low and high quantiles.

Table 2 reports the simulation results for the DGP2, which show the power
performance. Except for the median, the rejection frequencies approach one as
the sample size increases, which shows that our test is consistent. As
expected, the rejection frequencies are close to $0.05$ at the median because
there is no predictability at the median under the DGP2 (see Footnote 10 for
an explanation).

Next, we examine the finite-sample performance of the sup-version of the
Box-Ljung test statistic $\sup_{\tau\in\mathcal{T}}\hat{Q}_{\tau}^{(p)}$ over
a range of quantiles.\footnote{Due to computational burden, we compute the
Box-Ljung test statistic as a maximum over nine quantile levels $\tau
_{i}=0.05,0.1,0.2,0.3,0.5,0.7,0.8,0.9$ and $0.95.$} The simulation results in
Table 3 show that the sup-version test statistic $\sup_{\tau\in\mathcal{T}%
}\hat{Q}_{\tau}^{(p)}$ also has reasonably good finite sample performance,
though it tends to under-reject under DGP1. For DGP2, the rejection
frequencies approach one as the sample size increases.

\subsection{Results for the Self-Normalized Statistics}

We also examine the performance of the self-normalized version of $\hat
{Q}_{\tau}^{(p)}$under the same setup as above. We fix the trimming constant
$\omega$ to be 0.1.\footnote{We also considered 0.03 and 0.05 for $\omega$ and
the results are similar to those for $\omega=0.1.$} The number of repetitions
is 3,000. The empirical sizes of the test are reported in Table 4, where the
underlying process is the VAR model with DGP1. The test generally
under-rejects under the null hypothesis (DGP1), while at the extreme quantiles
($\tau=0.05$ or $0.95)$ the test slightly over-rejects in the small sample
($T=500)$. This finding is not very surprising because the self-normalized
statistic is based on subsamples and at the extreme quantiles there are
effectively not enough observations to compute the test statistic accurately.

Using the GARCH-X process of DGP2, we obtain empirical powers and present the
results in Table 5. With a one-period lag ($p=1$), the
self-normalized{\small \ }quantilogram at $\tau_{1},\tau_{2}\in
\{0.1,0.2,0.8,0.9\}$ rejects the null by about 23.0-30.0\%, 64.3-68.3\% and
91.7-94.0\% for sample sizes 500, 1,000 and 2,000, respectively. In general,
the rejection frequencies increase as the sample size increases, decline as
the maximum number of lags $p$ increases, and are not sensitive to the choice
of the trimming value. Our results suggest that the self-normalized statistics
may have lower power in finite samples compared with the test statistics based
on the stationary bootstrap procedure, see Lobato (2001) for a similar finding.

\section{Empirical Studies}

\subsection{Stock Return Predictability}

We apply the cross-quantilogram to detect directional predictability from an
economic{\small \ }state variable to stock returns. The issue of stock return
predictability has been very important and extensively investigated in the
literature; see Lettau and Ludvigson (2010) for an extensive review. A large
literature has considered predictability of the mean of stock return. The
typical mean return forecast examines whether the mean of an economic state
variable is helpful in predicting the mean of stock return (mean-to-mean
relationship). Recently, Cenesizoglu and Timmermann (2008) considered whether
the mean of an economic state variable is helpful in predicting different
quantiles of stock returns representing left tail, right tail or shoulders of
the return distribution. The cross-quantilogram adds one more dimension to
analyze predictability compared with the linear quantile regression, and so it
provides a more complete picture on the relationship between a predictor and
stock returns. Moreover, we can consider very large lags in the framework of
the quantilogram.

We use daily data from 3 Jan. 1996 to 29 Dec. 2006 with sample size
2,717.\footnote{The working paper version of this paper provides the results
using the monthly data previously analyzed in Goyal and Welch (2008).} Stock
returns are measured by the log price difference of the S\&P 500 index and we
employ stock variance as the predictor. The stock variance is treated as an
estimate of equity risk in the literature. The risk-return relationship is an
important issue in the finance literature; see Lettau and Ludvigson (2010) for
an extensive review. The cross-quantilogram can provide a more complete
relationship from risk to return, which cannot be examined using\ existing
methods. To measure stock variance, we use the realized variance given by the
sum of squared 5-minute returns$.\footnote{The realized variance is obtained
from `Oxford-Man Institute's realised library'.}$\ The autoregressive
coefficient for stock variance is estimated to be $0.68$ and the unit root
hypothesis is clearly rejected. The sample mean and median of stock returns
are $0.0003$ and $0.0005,$ respectively$.$

In Figures 1-3, we provide the sup-type test statistic $\sup_{\tau
\in\mathcal{T}}\hat{Q}_{\tau}^{(p)},$ the cross-quantilogram $\hat{\rho}%
_{\tau}(k)$ and the portmanteau test $\hat{Q}_{\tau}^{(p)}$ (we use the
Box-Ljung versions throughout) to detect directional predictability from stock
variance, representing risk, to stock return. In each graph, we show the 95\%
bootstrap confidence intervals for no predictability based on 1,000
bootstrapped replicates. The tuning parameter $1/\gamma$ is chosen by adapting
the rule suggested by Politis and White (2004) (and later corrected in Patton
et al. (2009)).\footnote{Specifically, $1/\hat{\gamma}=(2\hat{G}^{2}/\hat
{D}_{SB})^{1/3}T^{1/3}$ where $\hat{D}_{SB}=2\hat{g}^{2}(0)$. The definitions
of $\hat{g}$ and $\hat{G}$ are given on page 58 of Politis and White (2004).}
Since it is for univariate data, we apply it separately to each time series
and define $\gamma$ as the average value.

We first examine the sup-version Box-Ljung test statistic $\sup_{\tau
\in\mathcal{T}}\hat{Q}_{\tau}^{(p)}$ and the results are provided in Figure 1.
We consider low and high ranges of quantiles. For the low range, we
set\ $\mathcal{T}=[0.1,0.3]$ and $\tau_{i}=0.1+0.02k$ for $k=0,1,\cdots,10.$
For the high range, we set $\mathcal{T}=[0.7,0.9]$ and $\tau_{i}=0.7+0.02k$
for $k=0,1,\cdots,10.$ In each range, there are eleven different values of
$\tau_{i}$ and we let $\tau_{1}=\tau_{2}$ in calculating $\hat{\rho}_{\tau
}(k)$ for simplicity. Figure 1 clearly shows that there exists predictability
from stock variance to stock return in each range.

Next we investigate the cross-quantilogram $\hat{\rho}_{\tau}(k)$ and the
portmanteau test $\hat{Q}_{\tau}^{(p)}$ for different quantile points in
Figures 2(a)-3(b). For the quantiles of stock return $q_{1}(\tau_{1})$, we
consider $\tau_{1}=0.05,$ $0.1,0.2,0.3,0.5,0.7,0.8,0.9$ and $0.95$. For the
quantiles of stock variance $q_{2}(\tau_{2})$, we consider $\tau_{2}=0.1$ and
$0.9.$ Figures 2(a) and 2(b) are for the case when the stock variance is in
the low quantile, i.e. $\tau_{2}=0.1$. The cross-quantilograms $\hat{\rho
}_{\tau}(k)$ for $\tau_{1}=0.05,$ $0.1,0.2$ and $0.3$ are negative and
significant for many lags. For example, in case of $\tau_{1}=0.05,$ it means
that when risk is very low, it is less likely to have a large negative loss.
On the other hand, the cross-quantilograms for $\tau_{1}=0.7,0.8,0.9$ and
$0.95$ is positive and significant for many lags. For example, in case of
$\tau_{1}=0.95,$ it means that when risk is very low, it is less likely to
have a large positive gain. However, the cross-quantilogram for $\tau_{1}=0.5$
is mostly insignificant, which means that risk is not helpful in predicting
whether stock return is located below or above its median. Figure 2(b) shows
that the Box-Ljung test statistics are mostly significant except for $\tau
_{1}=0.5$.

Figures 3(a) and 3(b) are for the case when stock variance is in the high
quantile, i.e. $\tau_{2}=0.9$. Compared to the previous case of $\tau
_{2}=0.1,$ the cross-quantilograms have similar trends but much larger
absolute values. For $\tau_{1}=0.05,$ the cross-quantilogram $\hat{\rho}%
_{\tau}(1)$ is $-0.193,$ which implies that when risk is higher than its $0.9$
quantile, there is an increased likelihood of having a very large negative
loss in the next day. For $\tau_{1}=0.95,$ the cross-quantilogram $\hat{\rho
}_{\tau}(1)$ is $0.188,$ which implies that when risk is high (higher than its
$0.9$ quantile), there is an increased likelihood of having a very large
positive gain in the next day. The cross-quantilogram for $\tau_{1}=0.5$ is
mostly insignificant and the Box-Ljung test statistics in Figure 3(b) are
mostly significant except for $\tau_{1}=0.5$.

The results in Figures 1-3 show that stock variance is helpful in predicting
stock return and detailed features depend on each quantile of stock variance
and stock return. When stock variance is in high quantile, the absolute value
of the cross-quantilogram is higher and the cross-quantilogram is
significantly different from zero for larger lags. Our results exhibit a more
complete relationship between risk and return and additionally show how the
relationship changes for different lags.

\subsection{Systemic Risk}

The Great Recession of 2007-2009 has motivated researchers to better
understand systemic risk---the risk that the intermediation capacity of the
entire financial system can be impaired, with potentially adverse consequences
for the supply of credit to the real economy. One approach to measure systemic
risk is measuring co-dependence in the tails of equity returns of an
individual financial institution and the financial system.\footnote{Bisias et
al. (2012) categorize the current approaches to measuring systemic risk along
the following lines: 1) tail measures, 2) contingent claims analysis, 3)
network models, and 4) dynamic stochastic macroeconomic models.} Prominent
examples include the work of Adrian and Brunnermeier (2011), Brownlees and
Engle (2012) and White et al. (2012). Since the cross-quantilogram measures
quantile dependence between time series, we apply it to measure systemic risk.

We use the daily CRSP market value weighted index return as the market index
return as in Brownlees and Engle (2012). We consider returns on JP Morgan
Chase (JPM), Morgan Stanley (MS) and AIG as individual financial institutions.
As in Brownlees and Engle (2012), JPM, MS and AIG belong to the Depositories
group, the Broker-Dealers group and the Insurance group, respectively. We
investigate the cross-quantilogram $\hat{\rho}_{\tau}(k)$ between an
individual institution's stock return and the market index return for $k=60$
and $\tau_{1}=\tau_{2}=0.05$. In each graph, we show the 95\% bootstrap
confidence intervals for no quantile dependence based on 1,000 bootstrapped replicates.

The sample period is from 24 Feb. 1993 to 31 Dec. 2014 with sample size
5,505.\footnote{The stock return series of Morgan Stanley are available from
24 Feb. 1993. The stock return series of individual financial institutions are
obtained from Yahoo Finance.} The data including the financial crisis from
2007 and 2009 might not be suitable to be viewed as a strictly stationary
sequence and hence may not fit into our theoretical framework.\footnote{A
rigorous treatment of nonstationary time series in our context is a
challenging issue and will be reported in a future work.} Nevertheless, we
provide the empirical results because it would be practically interesting to
consider a sample period that includes the recent crisis and
post-crisis.\footnote{The results for the sample period from 24 Feb. 1993 to
29 Dec. 2006 are also available from the authors upon request.}

In Figure 4, each graph in the left column shows the cross-quantilogram from
each individual institution to the market. The cross-quantilograms are
positive and generally significant for large lags. The cross-quantilogram from
JPM to the market reaches its peak ($0.146$) at $k=12$ and declines steadily
afterwards. This means that it takes about two weeks for the systemic risk
from JPM to reach its peak once JPM is in distress. From MS to the market, the
cross-quantilogram reaches its peak ($0.127$) at $k=2.$ From AIG to the
market, the cross-quantilogram reaches its peak ($0.127$) at $k=17.$ When
AIG\ is in distress, the systemic risk from AIG takes a longer time (about
three weeks) to reach its peak. When an individual financial institution is in
distress, each institution makes an influence on the market in a different way.

Each graph in the right column of Figure 4 shows the cross-quantilograms from
the market to an individual institution. The cross-quantilogram for this case
is a measure of an individual institution's exposure to system wide distress
and therefore it is similar to the stress tests performed by individual
institutions. From the market to each institutions, the cross-quantilogram at
$k=1$ is relatively low for JPM ($0.062$) and MS ($0.073$) while it is higher
for AIG ($0.104$). Overall, when the market is in distress, each institution
is influenced by its impact in a different way. But the cross-quantilogram
reaches its peak at $k=2$ for all cases. The cross-quantilograms at $k=2$ are
$0.135$, $0.131$ and $0.139$ for JPM, MS and AIG, respectively.

As shown in Figure 4, the cross-quantilogram is a measure\ for either an
institution's systemic risk or an institution's exposure to system wide
distress. Compared to existing methods, one important feature of the
cross-quantilogram is that it provides in a simple manner how such a measure
changes as the lag $k$ increases. For example, White et al. (2012) adopt an
additional impulse response function within the multivariate and
multi-quantile framework to consider tail dependence for a large $k$.
Moreover, another feature of the cross-quantilogram is that it does not
require any modeling. For example, the approach by Brownlees and Engle (2012)
is based on the standard\ multivariate GARCH model and it requires the
modeling of the entire multivariate distribution.

Next, we apply the partial cross-quantilogram to examine the systemic risk
after controlling for an economic state variable. Following Adrian and
Brunnermeier (2011) and Bedljkovic (2010), we adopt the VIX index as the
economic state variable. Since the VIX\ index itself is highly persistent and
can be modeled as an integrated process, we instead use the VIX index change,
the first difference of the VIX\ index level, as the state variable. For the
quantile of the state variable, i.e. $\tau_{3}$ in (\ref{eq:pcq-sample}), we
let $\tau_{3}=0.95$ because a low quantile of a stock return is generally
associated with a rapid increase of the VIX\ index.

Figure 5 shows that the partial cross-quantilograms are still significant in
some cases even if their values are generally lower than the values of the
cross-quantilograms in Figure 4. This indicates that there still remains
systemic risk from an individual institution after controlling for an economic
state variable. These significant partial cross-quantilograms will be of
interest for the management of the systemic risk of an individual financial institution.

\section{Conclusion}

We have established the limiting properties of the cross-quantilogram in the
case of a finite number of lags. Hong (1996) established the properties of the
Box-Pierce statistic in the case that $p=p_{n}\rightarrow\infty:$ after a
location and scale adjustment the statistic is asymptotically normal, see also
Hong et. al. (2009) for a related work. No doubt our results can be extended
to accommodate this case, although in practice the desirability of such a test
is questionable, and the chi-squared type limit in our theory may provide
better critical values for even quite long lags. The cross-quantilogram is
easy to compute and the bootstrap confidence intervals appear to represent
modest enlargements of the Bartlett intervals in the series that we examined.
The statistic shows the cross dependence structure of the time series in a
granular fashion that is more informative than the usual methods.

\newpage

\section*{Appendix}

\singlespacing
\setcounter{equation}{0} \setcounter{page}{1}
\renewcommand{\theequation}{A-\arabic{equation}}
\renewcommand*{\thepage}{A.\arabic{page}} \renewcommand{\theLemma}{\arabic{Lemma}}

In appendix, we use $C$, $C_{1},C_{2},\dots$ to denote generic positive
constants without further clarification.


\vspace{0.5cm} \renewcommand{\theLemma}{A\arabic{Lemma}}
\renewcommand{\theProposition}{A\arabic{Proposition}} \setcounter{Lemma}{0} \setcounter{Proposition}{0}

\noindent\textbf{Appendix A. Asymptotic Results of Cross-Quantilogram}


\vspace{0.3cm}

\begin{Lemma}
\label{lemma:basic-ineq-A} Let $\{z_{t}\}_{t\in\mathds{Z}}$ be a strict
stationary, strong mixing sequence of $\mathds{R}^{d}$-valued random variables
for some integer $d\geq1$ with strong mixing coefficients $\{\alpha
_{j}\}_{j\in\mathds{Z}_{+}}$ satisfying $\sum_{j=0}^{\infty}(j+1)^{2s-2}%
\alpha_{j}^{\nu/(2s+\nu)}$ for some integer $s\geq2$ and $\nu\in(0,1)$.
Suppose that $E[z_{1}]=0$ and $\Vert z_{1}\Vert_{2s+\nu}<\infty$. Then,
\[
E\Big \|\sum_{t=1}^{T}z_{t}\Big \|^{2s}\leq T^{s}C\left\{  \left\Vert
z_{1}\right\Vert _{2+\nu}^{2s}+T^{1-s}\left\Vert z_{1}\right\Vert _{2s+\nu
}^{2s}\right\}  .
\]

\end{Lemma}

\begin{proof}
See Supplemental Material.
\end{proof}

\vspace{0.5cm}


We define the process indexed by $\tau\in\mathcal{T}$:
\[
\mathbb{V}_{t,k}(\tau):=\frac{1}{\sqrt{T}}\sum_{t=k+1}^{T}\left\{
1[\mathbf{y}_{t,k}\leq\mathbf{q}_{t,k}(\tau)]-E[F_{\mathbf{y}|\mathbf{x}%
}^{(k)}(\mathbf{q}_{t,k}(\tau)|\mathbf{x}_{t,k})]\right\}  .
\]
Also, define a $d_{i}\times1$ vector of random variables indexed by $\tau
_{i}\in\mathcal{T}_{i}$ for each $i=1,2$:
\[
\mathbb{W}_{i,T}(\tau_{i}):=\frac{1}{\sqrt{T}}\sum_{t=1}^{T}x_{it}\psi
_{\tau_{i}}\left(  y_{it}-q_{i,t}(\tau_{i})\right)  .
\]
The below lemma shows the stochastic equicontinuity of the processes defined
above, using a similar argument in Bai (1996).


\begin{Proposition}
\label{Proposition:cont} Suppose Assumption A1-A5 hold. Let $k \in\{1, \dots,
p\}$ and define metrics $\rho_{i}(\tau_{i}, \tau_{i}^{\prime})=|\tau
_{i}^{\prime}- \tau_{i}|$ for $\tau_{i}, \tau_{i}^{\prime}\in\mathcal{T}_{i}$
($i = 1,2$) and a metric $\rho(\tau, \tau^{\prime})=\sum_{i=1}^{2}\rho
_{i}(\tau_{i}, \tau_{i}^{\prime})$ for $\tau, \tau^{\prime}\in\mathcal{T}$. Then,

\begin{description}
\item[(a)] $\mathbb{V}_{T,k}(\tau)$ is stochastically equicontinuous on
$(\mathcal{T},\rho)$;

\item[(b)] $\mathbb{W}_{i,T}(\tau_{i})$ is stochastically equicontinuous on
$(\mathcal{T}_{i}, \rho_{i})$ for each $i = 1,2$.
\end{description}
\end{Proposition}

\begin{proof}
See Supplemental Material.
\end{proof}

\vspace{0.5cm}


Because of the importance of the result, we present the central limit theorem
for strong mixing sequence in the lemma below. The proof can be found in
Corollary 5.1 of Hall and Heyde (1980) or Rio (1997, 2013) among others.



\begin{Lemma}
\label{lemma:CLT-HH} Suppose that the strict stationary sequence
$\{z_{t}\}_{t\in\mathds{Z}}$ satisfies the strong mixing condition with
$E[z_{1}]=0$ and $E|z_{1}|^{2+\varsigma}<\infty$ for some $\varsigma
\in(0,\infty)$, while $\sum_{j=1}^{\infty}\alpha_{j}^{\varsigma/(2+\varsigma
)}<\infty$. Then, $\lim_{T\rightarrow\infty}E[(T^{-1/2}\sum_{t=1}^{T}%
z_{t})^{2}]=\sigma^{2}$ for some $\sigma^{2}\in\lbrack0,\infty)$. If
$\sigma^{2}>0$, then $\sigma^{-1}T^{-1/2}\sum_{t=1}^{T}z_{t}\rightarrow
^{d}N(0,1)$.
\end{Lemma}

\vspace{0.5cm}



Define a $d_{0}\times1$ vector $\mathbb{B}_{T,k}(\tau)=[\mathbb{V}_{T,k}%
(\tau),\mathbb{W}_{1,T}(\tau_{1})^{\top},\mathbb{W}_{2,T}(\tau_{2})^{\top
}]^{\top} $ for $\tau\in\mathcal{T}$ and $k = 1, \dots, p$. The following
proposition shows the weak convergence of the process $\{\mathbb{B}_{T,k}%
(\tau):\tau\in\mathcal{T}\}_{k=1}^{p}$.


\begin{Proposition}
\label{Proposition:weak convergence} Suppose Assumptions A1-A5 hold. Then,
\begin{align*}
\left[  \mathbb{B}_{T,1}(\cdot), \dots, \mathbb{B}_{T,p}(\cdot) \right]
^{\top} \Rightarrow\left[  \mathbb{B}_{1}(\cdot), \dots, \mathbb{B}_{p}(\cdot)
\right]  ^{\top}.
\end{align*}

\end{Proposition}

\begin{proof}
Proposition \ref{Proposition:cont} shows that $[\mathbb{B}_{T,1}(\cdot
),\dots,\mathbb{B}_{T,p}(\cdot)]^{\top}$ is stochastic equicontinuous. Thus,
it remains to establish convergence of the finite dimensional distributions.
By the Cramer-Wold device, it suffices to show
\[
\sum_{j=1}^{J}\theta_{j}\sum_{k=1}^{p}\kappa_{k}^{\top}\mathbb{B}_{T,k}\left(
\tau^{(j)}\right)  \rightarrow^{d}N\left(  0,\sigma_{\theta,\kappa}%
^{2}\right)  ,
\]
for any $\{\theta_{j}\in\mathds{R}\}_{j=1}^{J}$, $\{\kappa_{k}\in
\mathds{R}^{d}\}_{k=1}^{p}$, $\{\tau^{(j)}\in\lbrack0,1]^{2}\}_{j=1}^{J}$, and
$J\geq1$, where
\begin{equation}
\sigma_{\theta,\kappa}^{2}=\sum_{j=1}^{J}\sum_{j^{\prime}=1}^{J}\theta
_{j}\theta_{j^{\prime}}\sum_{k=1}^{p}\sum_{k^{\prime}=1}^{p}\kappa_{k}^{\top
}\Xi_{k,k^{\prime}}(\tau^{(j)},\tau^{(j^{\prime})})\kappa_{k^{\prime}}.
\label{eq:asym-var}%
\end{equation}
The original time-series is a stationary sequence satisfying the strong mixing
condition in Assumption A1 and a measurable transformation involving lagged
variables satisfies the same mixing condition if the lag order is finite.
Hence, the central limit theorem for strong-mixing sequences in Lemma
\ref{lemma:CLT-HH} shows that the convergence in distribution to the normal
law with the finite variance. Therefore, we establish the weak convergence.
\end{proof}

\vspace{0.5cm}

Let $\mathbf{v}=(v_{1},v_{2})\in\mathds{R}^{d_{1}}\times\mathds{R}^{d_{2}}$
and $\mathbf{v}_{t,k}=(v_{1,t},v_{2,t-k})^{\top}\in\mathds{R}^{2}$ with
$v_{i,t}=x_{it}^{\top}v_{i}$ for $i=1,2$ and for $t=1,\dots,T$. Define
\[
\mathbb{V}_{T,k}(\tau,\mathbf{v}):=\frac{1}{\sqrt{T}}\sum_{t=k+1}^{T}\left\{
1[\mathbf{y}_{t,k}\leq\mathbf{q}_{t,k}(\tau)+T^{-1/2}\mathbf{v}_{t,k}%
]-E[F_{\mathbf{y}|\mathbf{x}}^{(k)}(\mathbf{q}_{t,k}(\tau)+T^{-1/2}%
\mathbf{v}_{t,k}|\mathbf{x}_{t,k})]\right\}  ,
\]
and
\[
\mathbb{W}_{i,T}(\tau_{i},v_{i}):=\frac{1}{\sqrt{T}}\sum_{t=1}^{T}%
x_{it}\left\{  1[y_{it}\leq q_{i,t}(\tau_{i})+T^{-1/2}v_{i,t}]-F_{y_{i}|x_{i}%
}(q_{i,t}(\tau_{i})+T^{-1/2}v_{i,t}|x_{it})\right\}  .
\]


\begin{Proposition}
\label{Proposition:approx} Suppose Assumption A1-A5 hold. Then,

\begin{description}
\item[(a)] $\sup_{\tau\in\mathcal{T}}\sup_{\mathbf{v}\in\mathcal{V}_{M}%
}|\mathbb{V}_{T,k}(\tau,\mathbf{v})-\mathbb{V}_{T,k}(\tau)|=o_{p}(1)$ for
every $M>0$;

\item[(b)] $\sup_{\tau_{i}\in\mathcal{T}_{i}}\sup_{v_{i}\in\mathcal{V}_{i,M}%
}\Vert\mathbb{W}_{i,T}(\tau_{i},v_{i})-\mathbb{W}_{i,T}(\tau_{i})\Vert
=o_{p}(1)$ for every $M>0$ and $i=1,2$,
\end{description}
\end{Proposition}

\noindent\textit{where }$\mathcal{V}_{M}=\mathcal{V}_{1,M}\times
\mathcal{V}_{2,M}$\textit{\ with }$\mathcal{V}_{i,M}=\{v_{i}\in R^{d_{i}%
}:\Vert v_{i}\Vert\leq M\}$\textit{\ for }$i=1,2$\textit{.}\vspace{0.3cm}

\begin{proof}
See Supplemental Material.
\end{proof}




\begin{Proposition}
\label{Proposition:uniform-bahadur} Suppose Assumption A1-A5 hold. Then, for
$i=1,2$
\begin{align*}
\sqrt{T} \{ \hat{\beta}_{i}(\tau_{i}) - \beta_{i}(\tau_{i}) \} = -D_{i}%
^{-1}(\tau_{i}) \mathbb{W}_{i, T}(\tau_{i}) + o_{p}(1),
\end{align*}
uniformly in $\tau\in\mathcal{T}_{i}$.
\end{Proposition}

\begin{proof}
See Supplemental Material.
\end{proof}



\vspace{0.5cm}

The below lemma shows that the limiting behavior of the cross-quantilogram
process reflects the contributions of estimation errors due to the estimation
of the conditional quantile function.

\begin{Proposition}
\label{Proposition:u-l-approx} Suppose that Assumption A1-A5 hold. Then, for
each $k\in\{1,\dots,p\}$,
\[
\sqrt{T} \left\{  \hat{\rho}_{\tau}(k) - \rho_{\tau}(k) \right\}  = \frac{
\mathbb{V}_{T,k}(\tau) + \nabla G^{(k)}(\tau)^{\top} \sqrt{T} \{ \hat{\beta
}(\tau) - \beta(\tau) \} }{ \sqrt{\tau_{1}(1-\tau_{1})\tau_{2}(1-\tau_{2})}
}+o_{p}(1),
\]
uniformly in $\tau\in\mathcal{T}$.
\end{Proposition}

\begin{proof}
Let $\hat{\gamma}_{\tau,k}=T^{-1}\sum_{t=k+1}^{T}\psi_{\tau_{1}}(y_{1t}%
-\hat{q}_{1,t}(\tau_{1}))\psi_{\tau_{2}}(y_{2,t-k}-\hat{q}_{2,t-k}(\tau_{2}))$
and $\gamma_{\tau,k}=E[\psi_{\tau_{1}}(y_{1t}-q_{1,t}(\tau_{1}))\psi_{\tau
_{2}}(y_{2,t-k}-q_{2,t-k}(\tau_{2}))]$. Using a similar argument in Lemma 2.1
of Arcones (1998), we can show $\sup_{\tau_{i}\in\mathcal{T}_{i}}|T^{-1/2}%
\sum_{t=1}^{T}\psi_{\tau_{i}}(y_{it}-\hat{q}_{i,t}(\tau_{i}))|=o_{p}(1)$ for
$i=1,2$, because $x_{it}$ includes a constant term. It follows that, uniformly
in $\tau\in\mathcal{T}$,
\begin{equation}
T^{-1}\sum_{t=1}^{T}\psi_{\tau_{i}}^{2}(y_{it}-\hat{q}_{i,t}(\tau_{i}%
))=\tau_{i}(1-\tau_{i})+o_{p}(1),\ \ \mathrm{for}\ i=1,2,\label{eq:1-den}%
\end{equation}
and
\[
\sqrt{T}\left(  \hat{\gamma}_{\tau,k}-\gamma_{\tau,k}\right)  =T^{-1/2}%
\sum_{t=k+1}^{T}\big \{1[\mathbf{y}_{t,k}\leq\hat{\mathbf{q}}_{t,k}%
(\tau)]-E\big[F_{\mathbf{y}|\mathbf{x}}^{(k)}(\mathbf{q}_{t,k}(\tau
)|\mathbf{x}_{t,k})\big]\big \}+o_{p}(1).
\]
Define $\mathcal{V}_{M}=\{\mathbf{v}\equiv(v_{1},v_{2})\in\mathds{R}^{d_{1}%
}\times\mathds{R}^{d_{2}}:\max_{i=1,2}\Vert v_{i}\Vert\leq M\}$ for some $M>0$
and let $\mathbf{v}_{t,k}=(x_{1t}^{\top}v_{1},x_{2,t-k}^{\top}v_{2})^{\top}$.
Then, Proposition \ref{Proposition:approx} implies
\begin{align*}
&  T^{-1/2}\sum_{t=k+1}^{T}\big \{1[\mathbf{y}_{t,k}\leq\mathbf{q}_{t,k}%
(\tau)+T^{-1/2}\mathbf{v}_{t,k}]-E\big [F_{\mathbf{y}|\mathbf{x}}%
^{(k)}(\mathbf{q}_{t,k}(\tau)|\mathbf{x}_{t,k})\big]\big \}\\
&  \hspace{2cm}=\mathbb{V}_{T,k}(\tau)+\sqrt{T}E\big[F_{\mathbf{y}|\mathbf{x}%
}^{(k)}(\mathbf{q}_{t,k}(\tau)+T^{-1/2}\mathbf{v}_{t,k}|\mathbf{x}%
_{t,k})-F_{\mathbf{y}|\mathbf{x}}^{(k)}(\mathbf{q}_{t,k}(\tau)|\mathbf{x}%
_{t,k})\big]+o_{p}(1),
\end{align*}
uniformly in $(\tau,\mathbf{v})\in\mathcal{T}\times\mathcal{V}_{M}$ for any
$M>0$. Also, the mean-value theorem implies $\sqrt{T}E[F_{\mathbf{y}%
|\mathbf{x}}^{(k)}({\mathbf{q}}_{t,k}(\tau)+T^{-1/2}\mathbf{v}_{t,k}%
|\mathbf{x}_{t,k})-F_{\mathbf{y}|\mathbf{x}}^{(k)}(\mathbf{q}_{t,k}%
(\tau)|\mathbf{x}_{t,k})]=\nabla G^{(k)}(\tau)^{\top}\mathbf{v}+o(1)$
uniformly in $(\tau,\mathbf{v})\in\mathcal{T}\times\mathcal{V}_{M}$. Thus, for
any $M>0$,
\begin{equation}
\sup_{(\tau,\mathbf{v})\in\mathcal{T}\times\mathcal{V}_{M}}|R_{T}%
(\tau,\mathbf{v})|=o_{p}(1),\label{eq:sup-op}%
\end{equation}
where
\begin{align*}
R_{T}(\tau,\mathbf{v):=} &  T^{-1/2}\sum_{t=k+1}^{T}\big \{1[\mathbf{y}%
_{t,k}\leq\mathbf{q}_{t,k}(\tau)+T^{-1/2}\mathbf{v}_{t,k}]-E\big[F_{\mathbf{y}%
|\mathbf{x}}^{(k)}(\mathbf{q}_{t,k}(\tau)|\mathbf{x}_{t,k})\big]\big\}\\
&  -\left(  \mathbb{V}_{T,k}(\tau)+\nabla G^{(k)}(\tau)^{\top}\mathbf{v}%
\right)  .
\end{align*}
Let $\epsilon$ be an arbitrary positive constant. Proposition
\ref{Proposition:weak convergence} and \ref{Proposition:uniform-bahadur} imply
that there exists a constant $M>0$ such that $P(\sup_{\tau\in\mathcal{T}}%
\Vert\hat{\beta}(\tau)-\beta(\tau)\Vert>M/\sqrt{T})<\epsilon$ for a
sufficiently large $T$. It follows that there exists an $M>0$ such that
\[
P\left(  \sup_{\tau\in\mathcal{T}}\left\vert R_{T}(\tau,\sqrt{T}\{\hat{\beta
}(\tau)-\beta(\tau)\}\mathbf{)}\right\vert \mathbf{>\epsilon}\right)
<\epsilon+P\Big(\sup_{(\tau,\mathbf{v})\in\mathcal{T}\times\mathcal{V}_{M}%
}\left\vert R_{T}(\tau,\mathbf{v)}\right\vert \mathbf{>\epsilon}\Big),
\]
for a sufficiently large $T$. Thus, (\ref{eq:sup-op}) yields
\[
\sqrt{T}\left(  \hat{\gamma}_{\tau,k}-\gamma_{\tau,k}\right)  =\mathbb{V}%
_{T,k}(\tau)+\nabla G^{(k)}(\tau)^{\top}\sqrt{T}\{\hat{\beta}(\tau)-\beta
(\tau)\}+o_{p}(1),
\]
uniformly in $\tau\in\mathcal{T}$. This together with (\ref{eq:1-den}) yields
the desired result.
\end{proof}



\vspace{0.5cm}

\begin{proof}
[\textbf{Proof of Theorem \ref{theorem:lim-p}}]For each $i=1,2$, Proposition
\ref{Proposition:uniform-bahadur} yields an asymptotic linear approximation,
$\sqrt{T}\{\hat{\beta}_{i}(\tau_{i})-\beta_{i}(\tau_{i})\}=-D_{i}^{-1}%
(\tau_{i})\mathbb{W}_{i,T}(\tau_{i})+o_{p}(1)$ uniformly in $\tau_{i}%
\in\mathcal{T}_{i}$, which with Proposition \ref{Proposition:u-l-approx} shows
that $\sqrt{T}\left\{  \hat{\rho}_{\tau}(k)-\rho_{\tau}(k)\right\}
=\lambda_{\tau,k}^{\top}\mathbb{B}_{T,k}(\tau)+o_{p}(1)$ uniformly in $\tau
\in\mathcal{T}$. For a finite $p>0$, we have
\begin{equation}
\sqrt{T}\left(  \hat{\rho}_{\tau}^{(p)}-\rho_{\tau}^{(p)}\right)
=\Lambda_{\tau}^{(p)}\mathbb{B}_{T}^{(p)}(\tau)+o_{p}(1),
\label{eq:expansion-p}%
\end{equation}
uniformly in $\tau\in\mathcal{T}$. The desired result is obtained from
Proposition \ref{Proposition:weak convergence} with the continuous mapping theorem.
\end{proof}




\vspace{0.5cm}

\noindent\textbf{Appendix B. Stationary Bootstrap} \vspace{0.5cm}

\renewcommand{\theLemma}{B\arabic{Lemma}}
\renewcommand{\theProposition}{B\arabic{Proposition}}
\setcounter{Proposition}{0} \setcounter{Lemma}{0}

A positive integer valued, possibly infinite random variable $\mu$ is said to
be a \textit{stopping time} with respect to a filtration $\{\mathcal{F}%
_{n},n\geq1\}$ if $\{\mu=n\}\in\mathcal{F}_{n},\forall n\in\mathds{N}$. Given
random block lengths $\{L_{i}\}_{i\in\mathds{N}}$ under the stationary
bootstrap, define $N=\inf\{i\in\mathds{N}:L_{1}+\dots+L_{i}\geq n\}$. Then,
$N$ is a stopping time with respect to $\{\sigma(L_{1},\dots,L_{i}):1\leq
i\leq n\}$. In the following lemma, we present a moment inequality using ideas
found in the literature on the sopped random walk process. See Gut (2009) for
a comprehensive treatment.


\begin{Lemma}
\label{lemma:stop-moment} Let $\{z_{t}\}_{t\in\mathds{Z}}$ be a strict
stationary, strong mixing sequence of $\mathds{R}^{d}$-valued random variables
for some integer $d\geq1$ with strong mixing coefficients $\{\alpha
_{j}\}_{j\in\mathds{Z}_{+}}$ satisfying $\sum_{j=0}^{\infty}(j+1)^{2s-2}%
\alpha_{j}^{\nu/(2s+\nu)}$ for some integer $s\geq2$ and $\nu\in(0,1)$.
Suppose that $\Vert z_{1}\Vert_{2s+\nu}<\infty$ and a stationary bootstrap
resample, $\{z_{t}^{\ast}\}_{t=1}^{T}$, from $\{z_{t}\}_{t=1}^{T}$ satisfies
Assumption A6 with the sample size $T>0$. Define $S_{k,l}=\sum_{t=k}%
^{k+l-1}z_{t}$ and $S_{k,l}^{\ast}=\sum_{t=k}^{k+l-1}z_{t}^{\ast}$. Then,
\[
E\left\Vert S_{1,T}^{\ast}-E^{\ast}S_{1,T}^{\ast}\right\Vert ^{2s}\leq
C\Big \{(T\gamma)^{s}\sum_{l=1}^{\infty}\pi_{l}E\big \|\tilde{S}%
_{1,l}\big \|^{2s}+E\big \|\tilde{S}_{1,T}\big \|^{2s}\Big \},
\]
where $\tilde{S}_{k,l}=\sum_{t=k}^{k+l-1}(z_{t}-Ez_{t})$ for $k,l\in
\mathds{N}$.
\end{Lemma}

\begin{proof}
See Supplemental Material.
\end{proof}




\begin{Lemma}
\label{lemma:ineq-moment-boot} Suppose that the same conditions assumed in
Lemma \ref{lemma:stop-moment} hold. Then,%
\[
E\left\Vert S_{1,T}^{\ast}-E^{\ast}S_{1,T}^{\ast}\right\Vert ^{2s}\leq
T^{s}C\left(  \Vert z_{1}\Vert_{2+\nu}^{2s}+\gamma^{s-1}\Vert z_{1}%
\Vert_{2s+\nu}^{2s}\right)  ,
\]
for a sufficiently large $T$.
\end{Lemma}

\begin{proof}
See Supplemental Material.
\end{proof}


\vspace{0.5cm}

We now turn to the asymptotic results of cross-quantilogram based on the
stationary bootstrap. Define
\[
\mathbb{V}_{T,k}^{\ast}(\tau):=\frac{1}{\sqrt{T}}\sum_{t=k+1}^{T}\left\{
1[\mathbf{y}_{t,k}^{\ast}\leq\mathbf{q}_{t,k}^{\ast}(\tau)]-1[\mathbf{y}%
_{t,k}\leq\mathbf{q}_{t,k}(\tau)]\right\}
\]
and
\[
\mathbb{W}_{i,T}^{\ast}(\tau_{i}):=\frac{1}{\sqrt{T}}\sum_{t=k+1}^{T}\left\{
x_{it}^{\ast}\psi_{\tau_{i}}\left(  y_{it}^{\ast}-q_{i,t}^{\ast}(\tau
_{i})\right)  -x_{it}\psi_{\tau_{i}}\left(  y_{it}-q_{i,t}(\tau_{i})\right)
\right\}
\]
for each $i=1,2$. The lemma below shows the stochastic equicontinuity of the
processes, $\mathbb{V}_{T,k}^{\ast}(\cdot)$ and $\mathbb{W}_{i,T}^{\ast}%
(\cdot)$, \textit{unconditional} on the original sample.

\vspace{0.5cm}

\begin{Proposition}
\label{Proposition:cont-boot} Suppose Assumption A1-A6 hold. Let
$k\in\{1,\dots,p\}$ and define metrics $\rho_{i}(\cdot,\cdot)$ for $i=1,2$ and
a metric $\rho(\cdot,\cdot)$ as in Proposition \ref{Proposition:cont}. Then,

\begin{description}
\item[(a)] $\mathbb{V}_{T,k}^{\ast}(\tau)$ is stochastically equicontinuous on
$(\mathcal{T},\rho)$;

\item[(b)] $\mathbb{W}_{i,T}^{\ast}(\tau_{i})$ is stochastically
equicontinuous on $(\mathcal{T}_{i},\rho_{i})$ for each $i=1,2$.
\end{description}
\end{Proposition}

\begin{proof}
See Supplemental Material.
\end{proof}

\vspace{0.5cm}


Let $\mathbb{B}_{T,k}^{\ast}(\tau)= [\mathbb{V}_{T,k}^{\ast}(\tau
),\mathbb{W}_{1,T}^{\ast}(\tau_{1})^{\top},\mathbb{W}_{2,T}^{\ast}(\tau
_{2})^{\top}]^{\top} $ for $(k,\tau)\in\{1, \dots, p\} \times\mathcal{T}$ and
define $\mathbb{B}_{T,k}^{(p) \ast}(\tau) := [ \mathbb{B}_{T,1}^{\ast}(\tau),
\dots, \mathbb{B}_{T,p}^{\ast}(\tau) ]^{\top} $. As a norm that introduces the
topology of $(\ell^{\infty}(\mathcal{T}))^{pd_{0}}$, we use $\sup_{\tau
\in\mathcal{T}} \|\cdot\|$ defined on $(\ell^{\infty}(\mathcal{T}))^{pd_{0}}$,
so that $\sup_{\tau\in\mathcal{T}} \|f(\tau)\|$ for any $f \in(\ell^{\infty
}(\mathcal{T}))^{pd_{0}}$. Let $BL_{1}$ be the set of all Lipschitz
continuous, real-valued functions on $(\ell^{\infty}(\mathcal{T}))^{pd_{0}}$
with a Lipschitz constant bounded by 1. We prove the following proposition by
modifying the argument used in Theorem 2 of Galvao et.~al.~(2014), where the
approach of van der Vaart and Wellner (1996, Theorem 2.9.6) is extended for
the dependent process but their setup differs from the one here.

\vspace{0.5cm}

\begin{Proposition}
\label{Proposition:weak convergence-boot} Suppose Assumptions A1-A6 hold.
Then,
\begin{align*}
\sup_{h \in BL_{1}} \left|  E^{\ast} \big [ h(\mathbb{B}_{T}^{(p) \ast})
\big ] - E \big [ h(\mathbb{B}^{(p)}) \big ] \right|  \to^{p} 0.
\end{align*}

\end{Proposition}

\begin{proof}
Let $\delta>0$. Given the compact set $\mathcal{T}$ in $[0,1]^{2}$, there
exists a finite partition $\{\mathcal{T}^{(j)}\}_{j=1}^{J}$ such that
$\max_{1\leq j\leq J}\sup_{\tau^{\prime},\tau^{\prime\prime}\in\mathcal{T}%
^{(j)}}\Vert\tau^{\prime\prime}-\tau^{\prime}\Vert\leq\delta$. Pick up
$\tau^{(j)}\equiv(\tau_{1}^{(j)},\tau_{2}^{(j)})^{\top}\in\mathcal{T}^{(j)}$
for $j=1,\dots,J$ and let $\Pi_{\delta}$ be a map from $\mathcal{T}$ to
$\{\tau^{(j)}\}_{j=1}^{J}$ so that $\Pi_{\delta}(\tau)=\tau^{(j)}$ if $\tau
\in\mathcal{T}^{(j)}$. Define $\mathbb{B}_{T}^{(p)\ast}\circ\Pi_{\delta}$ and
$\mathbb{B}^{(p)}\circ\Pi_{\delta}$ as the stochastic processes on
$\mathcal{T}$, given by $\mathbb{B}_{T}^{(p)\ast}\circ\Pi_{\delta}%
(\tau)=\mathbb{B}_{T}^{(p)\ast}(\Pi_{\delta}(\tau))$ and $\mathbb{B}%
^{(p)}\circ\Pi_{\delta}(\tau)=\mathbb{B}^{(p)}(\Pi_{\delta}(\tau))$ for
$\tau\in\mathcal{T}$. It follows from the triangle inequality that, for any
$h\in BL_{1}$,
\begin{align}
\left\vert E^{\ast}\big [h(\mathbb{B}_{T}^{(p)\ast})\big ]-E\big [h(\mathbb{B}%
^{(p)})\big ]\right\vert  &  \leq\left\vert E^{\ast}\big [h(\mathbb{B}%
_{T}^{(p)\ast})\big ]-E^{\ast}\big [h(\mathbb{B}_{T}^{(p)\ast}\circ\Pi
_{\delta})\big ]\right\vert \label{eq:BL1}\\
&  \text{ \ \ }+\left\vert E^{\ast}\big [h(\mathbb{B}_{T}^{(p)\ast}\circ
\Pi_{\delta})\big ]-E\big [h(\mathbb{B}^{(p)}\circ\Pi_{\delta}%
)\big ]\right\vert \label{eq:BL2}\\
&  \text{ \ \ }+\left\vert E\big [h(\mathbb{B}^{(p)}\circ\Pi_{\delta
})\big ]-E\big [h(\mathbb{B}^{(p)})\big ]\right\vert . \label{eq:BL3}%
\end{align}
It suffices to show that (\ref{eq:BL1}) - (\ref{eq:BL3}) are $o_{p}(1)$
uniformly in $h\in BL_{1}$.

We first consider (\ref{eq:BL1}). We have
\begin{align*}
E \left[  \sup_{h \in BL_{1}} \left|  E^{\ast} \big [ h(\mathbb{B}_{T}^{(p)
\ast}) \big ] - E^{\ast} \big [ h(\mathbb{B}_{T}^{(p) \ast} \circ\Pi_{\delta})
\big ] \right|  \right]  \le E \left[  \sup_{h \in BL_{1}} \left|
h(\mathbb{B}_{T}^{(p) \ast}) - h(\mathbb{B}_{T}^{(p) \ast} \circ\Pi_{\delta})
\right|  \right]  .
\end{align*}
Let $I_{T, \delta, \epsilon}^{\ast}:=1 [ \sup_{\tau\in\mathcal{T}}
\|\mathbb{B}_{T}^{(p) \ast}(\tau) - \mathbb{B}_{T}^{(p) \ast} \circ\Pi
_{\delta}(\tau) \| > \epsilon]$ for $\epsilon>0$. Proposition
\ref{Proposition:cont-boot} implies that $\lim_{\delta\downarrow0} \lim_{T
\to\infty} E[I_{T, \delta, \epsilon}^{\ast}]< \epsilon$ for every $\epsilon
>0$. Also $\sup_{h \in BL_{1}} | h(\mathbb{B}_{T}^{(p) \ast}) - h(\mathbb{B}%
_{T}^{(p) \ast} \circ\Pi_{\delta}) | \le2 $ because the range of a function
$h$ is $[-1,1]$. It follows that
\begin{align*}
\lim_{\delta\downarrow0} \lim_{T \to\infty} E \left[  \sup_{h \in BL_{1}}
\left|  h(\mathbb{B}_{T}^{(p) \ast}) - h(\mathbb{B}_{T}^{(p) \ast} \circ
\Pi_{\delta}) \right|  \cdot I_{T, \delta, \epsilon}^{\ast} \right]  \le2
\epsilon.
\end{align*}
Since $\sup_{h \in BL_{1}} | h(\mathbb{B}_{T}^{(p) \ast}) - h(\mathbb{B}%
_{T}^{(p) \ast} \circ\Pi_{\delta}) | \le\sup_{\tau\in\mathcal{T}}
\|\mathbb{B}_{T}^{(p) \ast}(\tau) - \mathbb{B}_{T}^{(p) \ast} \circ\Pi
_{\delta}(\tau) \| $, we have
\begin{align*}
E \left[  \sup_{h \in BL_{1}} \left|  h(\mathbb{B}_{T}^{(p) \ast}) -
h(\mathbb{B}_{T}^{(p) \ast} \circ\Pi_{\delta}) \right|  \cdot(1 - I_{T,
\delta, \epsilon}^{\ast}) \right]  \le\epsilon.
\end{align*}
Thus, $\lim_{\delta\downarrow0} \lim_{T \to\infty} E [ \sup_{h \in BL_{1}} |
h(\mathbb{B}_{T}^{(p) \ast}) - h(\mathbb{B}_{T}^{(p) \ast} \circ\Pi_{\delta})
| ] \le3 \epsilon$. An application of the Markov inequality yields that
(\ref{eq:BL1}) is $o_{p}(1)$ uniformly in $h \in BL_{1}$.

Next we shall show that $\sup_{h\in BL_{1}}|E^{\ast}[h(\mathbb{B}_{T}%
^{(p)\ast}\circ\Pi_{\delta})]-E[h(\mathbb{B}^{(p)}\circ\Pi_{\delta
})]|\rightarrow^{p}0$ for any $\delta>0$. It suffices to show that
$\{\mathbb{B}_{T}^{(p)\ast}(\tau^{(j)})\}_{j=1}^{J}\rightarrow^{d}%
\{\mathbb{B}^{(p)}(\tau^{(j)})\}_{j=1}^{J}$ conditional on the original
sample, for almost every sequence.
To this end, we use the Cramer-Wold device and consider $\sum_{j=1}^{J}%
\theta_{j}\sum_{k=1}^{p}\kappa_{k}^{\top}\mathbb{B}_{T,k}^{\ast}\left(
\tau^{(j)}\right)  $ for some $\{\theta_{j}\in\mathds{R}\}_{j=1}^{J}$ and
$\{\kappa_{k}\in\mathds{R}^{d}\}_{k=1}^{p}$. Let $v_{t}^{\ast}=\sum_{j=1}%
^{J}\theta_{j}\sum_{k=1}^{p}\kappa_{k}^{\top}\xi_{t,k}^{\ast}(\tau^{(j)})$ and
$v_{t}=\sum_{j=1}^{J}\theta_{j}\sum_{k=1}^{p}\kappa_{k}^{\top}\xi_{t,k}%
(\tau^{(j)}),$ where $\xi_{t,k}(\cdot)$ is defined in Section 3 and $\xi
_{t,k}^{\ast}(\cdot)$ is its bootstrap counterpart. Then, we can write
\[
\sum_{j=1}^{J}\theta_{j}\sum_{k=1}^{p}\kappa_{k}^{\top}\mathbb{B}_{T,k}^{\ast
}\left(  \tau^{(j)}\right)  =T^{-1/2}\sum_{t=k+1}^{T}(v_{t}^{\ast}-v_{t}).
\]
As discussed in Proposition \ref{Proposition:weak convergence}, $\{v_{t}%
\}_{t\in\mathds{N}}$ is a stationary time-series satisfying Assumption 1. As
shown in p.~1237 of Kunsch (1989), the moment and strong-mixing assumption
imposed on the original time series implies the condition imposed on the forth
joint cumulant in (8) of Politis and Romano (1994). Hence, Theorems 1 and 2 of
Politis and Romano (1994) imply that the bootstrap estimate of the variance
converges to $\sigma_{\theta,\kappa}^{2}$ in probability, where $\sigma
_{\theta,\kappa}^{2}$ is defined in (\ref{eq:asym-var}), and that we obtained
the distribution convergence conditional on the original sample.

Finally, consider (\ref{eq:BL3}). The process $\mathbb{B}^{(p) }$ is uniformly
continuous on $\mathcal{T}$, which with the dominated convergence theorem
yields that $\lim_{\delta\downarrow0} \sup_{h \in BL_{1}} \left|  E \big [
h(\mathbb{B}^{(p)} \circ\Pi_{\delta}) \big ]
- E \big [
h(\mathbb{B}^{(p) } ) \big ]
\right|  = 0 $. Hence, we obtain the desired conclusion.
\end{proof}


\vspace{0.5cm}

For $\mathbf{v}=(v_{1},v_{2})\in\mathds{R}^{d_{1}}\times\mathds{R}^{d_{2}}$,
let $\mathbf{v}_{t,k}^{\ast}=(v_{1,t}^{\ast},v_{2,t-k}^{\ast})^{\top}$ with
$v_{i,t}^{\ast}=x_{it}^{\ast\top}v_{i}$ for $i=1,2$. Define
\[
\mathbb{V}_{T,k}^{\ast}(\tau,\mathbf{v}):=T^{-1/2}\sum_{t=k+1}^{T}\left\{
1[\mathbf{y}_{t,k}^{\ast}\leq\mathbf{q}_{t,k}^{\ast}(\tau)+T^{-1/2}%
\mathbf{v}_{t,k}^{\ast}]-1[\mathbf{y}_{t,k}\leq\mathbf{q}_{t,k}(\tau
)+T^{-1/2}\mathbf{v}_{t,k}]\right\}  ,
\]
and
\[
\mathbb{W}_{i,T}^{\ast}(\tau_{i},v_{i}):=T^{-1/2}\sum_{t=k+1}^{T}\left\{
x_{it}^{\ast}\psi_{\tau_{i}}\left(  y_{it}^{\ast}-q_{i,t}^{\ast}(\tau
_{i})-T^{-1/2}v_{i,t}^{\ast}\right)  -x_{it}\psi_{\tau_{i}}\left(
y_{it}-q_{i,t}(\tau_{i})-T^{-1/2}v_{i,t}\right)  \right\}  .
\]


\begin{Proposition}
\label{Proposition:approx-boot} Suppose Assumption A1-A6 hold. Then,

\begin{description}
\item[(a)] $\sup_{\tau\in\mathcal{T}} \sup_{\mathbf{v} \in\mathcal{V}_{M}} |
\mathbb{V}_{T,k}^{\ast}(\tau, \mathbf{v}) - \mathbb{V}_{T,k}^{\ast}(\tau) |
=o_{p}(1) $ for every $M>0$;

\item[(b)] $\sup_{\tau_{i} \in\mathcal{T}_{i}} \sup_{v_{i} \in\mathcal{V}%
_{i,M}} \| \mathbb{W}_{i,T}^{\ast}(\tau_{i}, v_{i}) - \mathbb{W}_{i, T}^{\ast
}(\tau_{i}) \| = o_{p}(1) $ for every $M>0$ and $i = 1,2$,
\end{description}
\end{Proposition}

\noindent\textit{where }$\mathcal{V}_{M}=\mathcal{V}_{1,M}\times
\mathcal{V}_{2,M}$\textit{\ with }$\mathcal{V}_{i,M}=\{v_{i}\in R^{d_{i}%
}:\Vert v_{i}\Vert\leq M\}$\textit{\ for }$i=1,2$\textit{.}\vspace{0.3cm}

\begin{proof}
See Supplemental Material.
\end{proof}

\vspace{0.5cm}



\begin{Proposition}
\label{Proposition:uniform-bahadur-bootstrap} Suppose Assumption A1-A6 hold.
Then, for $i=1,2$,
\[
\sqrt{T}\{\hat{\beta}_{i}^{\ast}(\tau_{i})-\beta_{i}(\tau_{i})\}=-D_{i}%
^{-1}(\tau_{i})\frac{1}{\sqrt{T}}\sum_{t=k+1}^{T}x_{it}^{\ast}\psi_{\tau_{i}%
}(y_{it}^{\ast}-q_{i,t}^{\ast}(\tau_{i}))+o_{p}(1),
\]
uniformly in $\tau_{i}\in\mathcal{T}_{i}$.
\end{Proposition}

\begin{proof}
A similar argument used in Proposition \ref{Proposition:uniform-bahadur}
completes the proof and thus the details are omitted.
\end{proof}



\vspace{0.5cm}

\begin{Proposition}
\label{Proposition:u-l-approx-boot} Suppose that Assumption A1-A6 hold. Then,
for each $k\in\{1,\dots,p\}$,
\[
\sqrt{T} \left\{  \hat{\rho}_{\tau}^{\ast}(k) - \hat{\rho} _{\tau}(k)
\right\}  = \frac{ \mathbb{V}_{T,k}^{\ast}(\tau) + \nabla G^{(k)}(\tau)^{\top}
\sqrt{T}\{\hat{\beta}^{\ast}(\tau) - \hat{\beta}(\tau)\} }{ \sqrt{\tau
_{1}(1-\tau_{1})\tau_{2}(1-\tau_{2})} }+ o_{p}(1),
\]
uniformly in $\tau\in\mathcal{T}$.
\end{Proposition}

\begin{proof}
Let $\hat{\gamma}_{\tau,k}^{\ast}=T^{-1}\sum_{t=k+1}^{T}\psi_{\tau_{1}}%
(y_{1t}^{\ast}-\hat{q}_{1,t}^{\ast}(\tau_{1}))\psi_{\tau_{2}}(y_{2,t-k}^{\ast
}-\hat{q}_{2,t-k}^{\ast}(\tau_{2}))$. Using a similar argument used to show
Lemma 2.1 of Arcones (1998), we can show $\sup_{\tau_{i}\in\mathcal{T}_{i}%
}|T^{-1/2}\sum_{t=1}^{T}\psi_{\tau_{i}}(y_{it}^{\ast}-\hat{q}_{i,t}^{\ast
}(\tau_{i}))|=o_{p}(1)$ for $i=1,2$. It follows that
\[
T^{-1}\sum_{t=k+1}^{T}\psi_{\tau_{i}}^{2}(y_{it}^{\ast}-\hat{q}_{it}^{\ast
}(\tau_{i}))=\tau_{i}(1-\tau_{i})+o_{p}(1),\ \ \ \mathrm{for}\ i=1,2,
\]
and
\[
\sqrt{T}\left(  \hat{\gamma}_{\tau,k}^{\ast}-\hat{\gamma}_{\tau,k}\right)
=T^{-1/2}\sum_{t=k+1}^{T}\left\{  1[\mathbf{y}_{t,k}^{\ast}\leq\hat
{\mathbf{q}}_{t,k}^{\ast}(\tau)]-1[\mathbf{y}_{t,k}\leq\hat{\mathbf{q}}%
_{t,k}(\tau)]\right\}  +o_{p}(1),
\]
uniformly in $\tau_{i}\in\mathcal{T}_{i}$ and $\tau\in\mathcal{T}$,
respectively. As in Proposition \ref{Proposition:u-l-approx}, we can show
\[
\frac{1}{\sqrt{T}}\sum_{t=k+1}^{T}\big \{1[\mathbf{y}_{t,k}\leq\hat
{\mathbf{q}}_{t,k}(\tau)]-E\big [F_{\mathbf{y}|\mathbf{x}}^{(k)}%
(\mathbf{q}_{t,k}(\tau)\mathbf{x}_{t,k})\big ]\big \}=\mathbb{V}_{T,k}%
(\tau)+\nabla G^{(k)}(\tau)^{\top}\sqrt{T}\{\hat{\beta}(\tau)-\beta
(\tau)\}+o_{P}(1),
\]
uniformly in $\tau\in\mathcal{T}$. A similar argument used in Proposition
\ref{Proposition:u-l-approx} together with Proposition
\ref{Proposition:approx-boot} and \ref{Proposition:approx} yields that,
uniformly in $\tau\in\mathcal{T}$,
\begin{align*}
\frac{1}{\sqrt{T}}\sum_{t=k+1}^{T}\big \{1[\mathbf{y}_{t,k}^{\ast}\leq
\hat{\mathbf{q}}_{t,k}^{\ast}(\tau)]-E\left[  F_{\mathbf{y}|\mathbf{x}}%
^{(k)}(\mathbf{q}_{t,k}(\tau)|\mathbf{x}_{t,k})\right]  \big \}  &
=\mathbb{V}_{T,k}(\tau)^{\ast}+\mathbb{V}_{T,k}(\tau)\\
&  +\nabla G^{(k)}(\tau)^{\top}\sqrt{T}\{\hat{\beta}^{\ast}(\tau)-\beta
(\tau)\}+o_{p}(1).
\end{align*}
It follows that $\sqrt{T}(\hat{\gamma}_{\tau,k}^{\ast}-\hat{\gamma}_{\tau
,k})=\mathbb{V}_{T,k}^{\ast}(\tau)+\nabla G^{(k)}(\tau)^{\top}\sqrt{T}%
\{\hat{\beta}^{\ast}(\tau)-\hat{\beta}(\tau)\}+o_{p}(1)$ uniformly in $\tau
\in\mathcal{T}$. Thus, we obtained the desired result.
\end{proof}



\vspace{0.5cm}

\begin{proof}
[\textbf{Proof of Theorem \ref{theorem:boostrap validity}}]\noindent
\textbf{(a)} Define the processes $\hat{\mathbb{G}}_{T}^{(p)\ast}(\tau
):=\sqrt{T}(\hat{\rho}_{\tau}^{\ast(p)}-\hat{\rho}_{\tau}^{(p)})$ and
$\mathbb{G}^{(p)}(\tau):=\Lambda_{\tau}^{(p)}\mathbb{B}^{(p)}(\tau)$ for
$\tau\in\mathcal{T}$ and for an integer $p>0$. Let $\widetilde{BL}_{1}$ denote
the set of all Lipschitz continuous, real-valued functions on $(\ell^{\infty
}(\mathcal{T}))^{p}$ with a Lipschitz constant bounded by 1. It suffices to
show that
\[
\sup_{h\in\widetilde{BL}_{1}}\big |E^{\ast}\big [h(\mathbb{G}_{T}^{(p)\ast
})\big ]-E\big [h(\mathbb{G}^{(p)})\big ]\big |\rightarrow^{p}0.
\]
Let $\mathbb{G}_{T}^{(p)\ast}(\tau):=\Lambda_{\tau}^{(p)}\mathbb{B}%
_{T}^{(p)\ast}(\tau)$. We can write
\begin{align*}
\sup_{h\in\widetilde{BL}_{1}}\big |E^{\ast}\big [h(\hat{\mathbb{G}}%
_{T}^{(p)\ast})\big ]-E\big [h(\mathbb{G}^{(p)})\big ]\big |  &  \leq
\sup_{h\in\widetilde{BL}_{1}}\big |E^{\ast}\big [h(\hat{\mathbb{G}}%
_{T}^{(p)\ast})\big ]-E^{\ast}\big [h(\mathbb{G}_{T}^{(p)\ast})\big ]\big |\\
&  \text{ \ \ }+\sup_{h\in\widetilde{BL}_{1}}\big |E^{\ast}\big [h(\mathbb{G}%
_{T}^{(p)\ast})\big ]-E\big [h(\mathbb{G}^{(p)})\big ]\big |.
\end{align*}
Propositions \ref{Proposition:uniform-bahadur} and
\ref{Proposition:uniform-bahadur-bootstrap} imply that $\sqrt{T}\{\hat{\beta
}_{i}^{\ast}(\tau_{i})-\hat{\beta}_{i}(\tau_{i})\}=-D_{i}^{-1}(\tau
_{i})\mathbb{W}_{1,T}^{\ast}(\tau_{i})+o_{p}(1)$ uniformly in $\tau_{i}%
\in\mathcal{T}_{i}$ for each $i=1,2$. It follows from Proposition
\ref{Proposition:approx-boot} that
\[
\sqrt{T}\left\{  \hat{\rho}_{\tau}^{\ast}(k)-\hat{\rho}_{\tau}(k)\right\}
=\lambda_{\tau,k}^{\top}\mathbb{B}_{T,k}^{\ast}(\tau)+o_{p}(1),
\]
uniformly in $\tau\in\mathcal{T}$, where $\lambda_{\tau,k}$ is defined in
(\ref{eq:lambda-def}). This leads to
\[
\sup_{\tau\in\mathcal{T}}\big \|\hat{\mathbb{G}}_{T}^{(p)\ast}(\tau
)-\mathbb{G}_{T}^{(p)\ast}(\tau)\big \|=o_{p}(1).
\]
This implies that $\sup_{h\in\widetilde{BL}_{1}}|h(\hat{\mathbb{G}}%
_{T}^{(p)\ast})-h(\mathbb{G}_{T}^{(p)\ast})|=o_{p}(1)$, because $\sup
_{h\in\widetilde{BL}_{1}}|h(\hat{\mathbb{G}}_{T}^{(p)\ast})-h(\mathbb{G}%
_{T}^{(p)\ast})|\leq\sup_{\tau\in\mathcal{T}}\Vert\hat{\mathbb{G}}%
_{T}^{(p)\ast}(\tau)-\mathbb{G}_{T}^{(p)\ast}(\tau)\Vert$. It follows from the
dominated convergence theorem that $\lim_{T\rightarrow\infty}E\sup
_{h\in\widetilde{BL}_{1}}|E^{\ast}[h(\hat{\mathbb{G}}_{T}^{(p)\ast})]-E^{\ast
}[h(\mathbb{G}_{T}^{(p)\ast})]|=0$. An application of the Markov inequality
shows that $\sup_{h\in\widetilde{BL}_{1}}|E^{\ast}[h(\hat{\mathbb{G}}%
_{T}^{(p)\ast})]-E^{\ast}[h(\mathbb{G}_{T}^{(p)\ast})]|=o_{p}(1).$

Under Assumption A4 and A5, $\Lambda_{\tau}^{(p)}$ is bounded uniformly in
$\tau\in\mathcal{T}$, we have
\begin{align*}
\sup_{h \in\widetilde{BL}_{1}} \big | E^{\ast} \big [ h ( \mathbb{G}_{T}^{(p)
\ast} ) \big ] - E \big [ h ( \mathbb{G}^{(p)} ) \big ] \big | \le C_{1}
\sup_{g \in BL_{1}} \big | E^{\ast} \big [ g ( \mathbb{B}_{T}^{(p) \ast} )
\big ] - E \big [ g ( \mathbb{B}^{(p)} ) \big ] \big |,
\end{align*}
where the right-hand side is negligible in probability from Proposition
\ref{Proposition:weak convergence-boot}. Hence, we obtain the desired result.

\textbf{(b)} From the continuous mapping theorem, the result in (a) of this
theorem yields the desired result. See Theorem 10.8 of Kosorok (2007) for a
general argument.
\end{proof}



\vspace{0.5cm}

\begin{proof}
[\textbf{Proof of Theorem \ref{theorem:alternative}}]As shown in
(\ref{eq:expansion-p}), under both fixed and local alternatives,
\[
\sqrt{T}\left(  \hat{\rho}_{\tau}^{(p)}-\rho_{\tau}^{(p)}\right)
=\Lambda_{\tau}^{(p)}\mathbb{B}_{T}^{(p)}(\tau)+o_{p}(1)
\]
uniformly in $\tau\in\mathcal{T}$, and it follows from Theorem 1 that
$\Lambda_{\tau}^{(p)}\mathbb{B}_{T}^{(p)}(\tau)=O_{P}(1)$ uniformly in
$\tau\in\mathcal{T}$.

\textbf{(a)} Under the fixed alternative, there is some $\tau\in\mathcal{T} $
such that $\rho_{\tau}^{(p)}$ is some non-zero constant and then $\sqrt{T}%
\hat{\rho}_{\tau}^{(p)}=\Lambda_{\tau}^{(p)} \mathbb{B}_{T}^{(p)}(\tau)
+\sqrt{T}\rho_{\tau}^{(p)}+o_{p}(1) $ uniformly in $\tau\in\mathcal{T}$. This
implies that, under the fixed alternative, $\sup_{\tau\in\mathcal{T}}\hat
{Q}_{\tau}^{(p)}=T\sup_{\tau\in\mathcal{T}}\Vert\rho_{\tau}^{(p)}\Vert^{2}(
1+o_{p}(1)) $. Thus, $\sup_{\tau\in\mathcal{T}}\hat{Q}_{\tau}^{(p)}%
\rightarrow^{p}\infty$ under the fixed alternative, whereas the critical value
$c_{Q,\tau}^{\ast}$ is bounded in probability from Theorem 2. Therefore,
$\lim_{T\rightarrow\infty}P(\sup_{\tau\in\mathcal{T}}\hat{Q}_{\tau}%
^{(p)}>c_{Q,\tau}^{\ast})=1$. Therefore, our test is shown to be consistent
under the fixed alternative.

\textbf{(b)} Under the local alternative, we can write $\rho_{\tau}%
^{(p)}=\zeta_{\tau}^{(p)}/\sqrt{T}$, where $\zeta_{\tau}^{(p)}$ is a
$p$-dimensional constant vector, at least one of elements is non-zero. Thus,
we have
\[
\hat{Q}_{\tau}^{(p)}=\Vert\Lambda_{\tau}^{(p)}\mathbb{B}_{T}^{(p)}(\tau
)+\zeta_{\tau}^{(p)}\Vert^{2}+o_{P}(1),
\]
uniformly in $\tau\in\mathcal{T}$. From Theorem 1 and the continuous mapping
theorem,
\[
\sup_{\tau\in\mathcal{T}}\hat{Q}_{\tau}^{(p)}\Rightarrow\sup_{\tau
\in\mathcal{T}}\Vert\Lambda_{\tau}^{(p)}\mathbb{B}^{(p)}(\tau)+\zeta_{\tau
}^{(p)}\Vert^{2}.
\]
Also, Theorem 2 implies $\sup_{\tau\in\mathcal{T}}\hat{Q}_{\tau}^{(p)\ast
}\Rightarrow^{\ast}\sup_{\tau\in\mathcal{T}}\Vert\Lambda_{\tau}^{(p)}%
\mathbb{B}^{(p)}(\tau)\Vert^{2} $ in probability. Thus, the desired result follows.
\end{proof}



\bigskip
\renewcommand{\theLemma}{C\arabic{Lemma}}
\renewcommand{\theProposition}{C\arabic{Proposition}} \setcounter{Lemma}{0} \setcounter{Proposition}{0}

\vspace{0.5cm} \noindent\textbf{Appendix C. Self-Normalized
Cross-Quantilogram}



\vspace{0.5cm}

\begin{Lemma}
\label{lemma:basic-ineq-C} Let $\{z_{t}\}_{t\in\mathds{Z}}$ be a strict
stationary, strong mixing sequence of $\mathds{R}^{d}$-valued random variables
for some integer $d\geq1$ with strong mixing coefficients $\{\alpha
_{j}\}_{j\in\mathds{Z}_{+}}$ satisfying $\sum_{j=0}^{\infty}(j+1)^{2s-2}%
\alpha_{j}^{\nu/(2s+\nu)}$ for some integer $s\geq2$ and $\nu\in(0,1)$.
Suppose that $E[z_{1}]=0$ and $\Vert z_{1}\Vert_{2s+\nu}<\infty$. Then,
\[
E\bigg [\sup_{r\in\lbrack0,1]}\Big \|\sum_{t=1}^{[Tr]}z_{t}\Big \|^{2s}%
\bigg ]\leq CT^{s}\big (\big \|z_{1}\big \|_{2+\nu}^{2s}+T^{1-s}%
\big \|z_{1}\big \|_{2s+\nu}\big ).
\]

\end{Lemma}

\begin{proof}
The desired result follows from Theorem 6.3 and Annexes C of Rio (2013) as in
Lemma \ref{lemma:basic-ineq-A}.
\end{proof}

\vspace{0.5cm}


We define the process indexed by $r\in\lbrack0,1]$
\[
\bar{\mathbb{V}}_{T,k,\tau}(r):=\frac{1}{\sqrt{T}}\sum_{t=k+1}^{[Tr]}\left\{
1[\mathbf{y}_{t,k}\leq\mathbf{q}_{t,k}(\tau)]-E[F_{\mathbf{y}|\mathbf{x}%
}^{(k)}(\mathbf{q}_{t,k}(\tau)|\mathbf{x}_{t,k})]\right\}  ,
\]
and
\[
\bar{\mathbb{W}}_{i,T,\tau_{i}}(r):=\frac{1}{\sqrt{T}}\sum_{t=1}^{[Tr]}%
x_{it}\left\{  1[y_{it}\leq q_{i,t}(\tau_{i})]-\tau_{i}\right\}  ,
\]
for each $i=1,2$. The following proposition shows the stochastic
equicontinuity of the processes defined above.


\begin{Proposition}
\label{Proposition:cont-self} Suppose Assumption A1-A5 hold. Let $k \in\{1,
\dots, p\}$ and define metrics $\bar{\rho}(r, r^{\prime})=|r^{\prime}- r|$ for
$r,r^{\prime}\in[0,1]$. Then,

\begin{description}
\item[(a)] $\bar{\mathbb{V}}_{T,k,\tau}(r)$ is stochastically equicontinuous
on $([0,1],\bar{\rho} )$.

\item[(b)] $\bar{\mathbb{W}}_{i,T,\tau_{i}}(r)$ is stochastically
equicontinuous on $([0,1],\bar{\rho})$ for each $i = 1,2$.
\end{description}
\end{Proposition}

\begin{proof}
See Supplemental Material.
\end{proof}


\vspace{0.5cm}

Define a $d_{0} \times1$ vector $\bar{\mathbb{B}}_{T,k, \tau}(r) = [
\bar{\mathbb{V}}_{T,k,\tau}(r), \bar{\mathbb{W}}_{1, T,\tau_{1}}(r)^{\top},
\bar{\mathbb{W}}_{2, T,\tau_{2}}(r)^{\top}]^{\top}. $ for $r \in[0,1]$ and $k
= 1, \dots, p$. The following proposition shows the weak convergence of the
process $\{\bar{\mathbb{B}}_{T,k,\tau}(r): r \in[0,1] \}_{k=1}^{p}$.

\begin{Proposition}
\label{Proposition:weak convergence-self} Suppose Assumptions A1-A5 hold.
Then,
\begin{align*}
\left[  \bar{\mathbb{B}}_{T,1, \tau}(\cdot), \dots, \bar{\mathbb{B}}_{T,p,
\tau}(\cdot) \right]  ^{\top} \Rightarrow\left[  \bar{\mathbb{B}}_{1,\tau
}(\cdot), \dots, \bar{\mathbb{B}}_{p,\tau}(\cdot) \right]  ^{\top}.
\end{align*}

\end{Proposition}

\begin{proof}
Proposition \ref{Proposition:cont-self} establishes the stochastic
equicontinuity of $\left[  \bar{\mathbb{B}}_{T,1,\tau}(\cdot),\dots
,\bar{\mathbb{B}}_{T,p,\tau}(\cdot)\right]  ^{\top}$ and it suffices to show
convergence of the finite dimensional distributions. Since the finite
dimensional convergences can be shown by a similar argument used in
Proposition \ref{Proposition:weak convergence}, we omit the details.
\end{proof}

\vspace{0.5cm}



For $\mathbf{v}=(v_{1},v_{2})\in\mathds{R}^{d_{1}}\times\mathds{R}^{d_{2}}$,
we define
\[
\bar{\mathbb{V}}_{T,k,\tau}(r,\mathbf{v}):=\frac{1}{\sqrt{T}}\sum
_{t=k+1}^{[Tr]}\left\{  1[\mathbf{y}_{t,k}\leq\mathbf{q}_{t,k}(\tau
)+T^{-1/2}\mathbf{v}_{t,k}]-E\left[  F_{\mathbf{y}|\mathbf{x}}^{(k)}\left(
\mathbf{q}_{t,k}(\tau)+T^{-1/2}\mathbf{v}_{t,k}|\mathbf{x}_{t,k}\right)
\right]  \right\}  ,
\]
and
\[
\bar{\mathbb{W}}_{i,T,\tau_{i}}(r,v_{i}):=\frac{1}{\sqrt{T}}\sum
_{t=k+1}^{[Tr]}x_{it}\left\{  1[y_{it}\leq q_{i,t}(\tau_{i})+T^{-1/2}%
v_{i,t}]-F_{y_{i}|x_{i}}\left(  q_{i,t}(\tau_{i})+T^{-1/2}v_{i,t}%
|x_{it}\right)  \right\}  .
\]



\begin{Proposition}
\label{Proposition:approx-self} Suppose Assumption A1-A5 hold. Then,

\begin{description}
\item[(a)] $\sup_{\omega\le r \le1} \sup_{\mathbf{v} \in\mathcal{V}_{M}} |
\bar{\mathbb{V}}_{T,k,\tau}(r, \mathbf{v}) - \bar{\mathbb{V}}_{T,k,\tau}(r) |
= o_{p}(1) $ for every $M>0$;

\item[(b)] $\sup_{\omega\le r \le1} \sup_{v_{i} \in\mathcal{V}_{i, M}} \|
\bar{\mathbb{W}}_{i,T,\tau_{i}}(r, v_{i}) - \bar{\mathbb{W}}_{i, T,\tau_{i}%
}(r) \| = o_{p}(1) $ for every $M>0$ and for $i = 1,2$,
\end{description}
\end{Proposition}

\noindent\textit{where }$\mathcal{V}_{M}=\mathcal{V}_{1,M}\times
\mathcal{V}_{2,M}$\textit{\ with }$\mathcal{V}_{i,M}=\{v_{i}\in R^{d_{i}%
}:\Vert v_{i}\Vert\leq M\}$\textit{\ for }$i=1,2$\textit{.}\vspace{0.3cm}

\begin{proof}
See Supplemental Material.
\end{proof}





\begin{Proposition}
\label{Proposition:uniform-bahadur-subsample} Suppose Assumption A1-A5 hold.
Then, for $i=1,2$ and for each $\tau_{i}\in\mathcal{T}_{i}$,
\[
\sqrt{T}\{\hat{\beta}_{i,[Tr]}(\tau_{i})-\beta_{i}(\tau_{i})\}=-D_{i}%
^{-1}(\tau_{i})r^{-1}\bar{\mathbb{W}}_{i,T,\tau_{i}}(r)+o_{p}(1),
\]
uniformly in $r\in\lbrack\omega,1]$.
\end{Proposition}

\begin{proof}
The proof follows the line of Proposition \ref{Proposition:uniform-bahadur}
with Proposition \ref{Proposition:approx-self}(b). Hence, we omit the details.
\end{proof}



\vspace{0.5cm}

\begin{Proposition}
\label{Proposition:u-l-approx-self} Suppose Assumption A1-A5 hold. Then, for
each $(k,\tau)\in\{1,\dots,p\}\times\mathcal{T}$,
\[
\sqrt{T}\left\{  \hat{\rho}_{\tau,[Tr]}(k)-\rho_{\tau}(k)\right\}
=\frac{r^{-1}\bar{\mathbb{V}}_{T,k,\tau}(r)+\nabla G^{(k)}(\tau)^{\top}%
\sqrt{T}\{\hat{\beta}_{[Tr]}(\tau)-\beta(\tau)\}}{\sqrt{\tau_{1}(1-\tau
_{1})\tau_{2}(1-\tau_{2})}}+o_{p}(1),
\]
uniformly in $r\in\lbrack\omega,1]$, where $\hat{\beta}_{[Tr]}=(\hat{\beta
}_{1,[Tr]}^{\top},\hat{\beta}_{2,[Tr]}^{\top})^{\top}.$
\end{Proposition}

\begin{proof}
A similar argument used in Proposition \ref{Proposition:u-l-approx} with
Proposition \ref{Proposition:approx-self}(a) yields the desired result and
thus we omit the detail.
\end{proof}



\vspace{0.5cm}

\begin{proof}
[\textbf{Proof of Theorem \ref{theorem:self-norm}}]Proposition
\ref{Proposition:uniform-bahadur-subsample} and
\ref{Proposition:u-l-approx-self} imply that, for each $(k,\tau)\in
\{1,\dots,p\}\times\mathcal{T}$,
\[
\sqrt{T}\left\{  \hat{\rho}_{\tau,[Tr]}(k)-\rho_{\tau}(k)\right\}
=r^{-1}\lambda_{\tau,k}^{\top}\bar{\mathbb{B}}_{T,k,\tau}(r)+o_{p}(1),
\]
uniformly in $r\in\lbrack\omega,1]$. It follows that $\sqrt{T}(\hat{\rho
}_{\tau,[Tr]}^{(p)}-\rho_{\tau}^{(p)})=r^{-1}\Lambda_{\tau}^{(p)}%
\bar{\mathbb{B}}_{T,\tau}^{(p)}(r)+o_{p}(1)$ uniformly in $r\in\lbrack
\omega,1]$. This implies
\[
\frac{\lbrack Tr]}{\sqrt{T}}\left(  \hat{\rho}_{\tau,[Tr]}^{(p)}-\hat{\rho
}_{\tau,T}^{(p)}\right)  =\Lambda_{\tau}^{(p)}\left\{  \bar{\mathbb{B}%
}_{T,\tau}^{(p)}(r)-r\bar{\mathbb{B}}_{T,\tau}^{(p)}(1)\right\}  +o_{p}(1),
\]
uniformly in $r\in\lbrack\omega,1]$. From Proposition
\ref{Proposition:weak convergence-self}, $\{\Lambda_{\tau}^{(p)}%
(\bar{\mathbb{B}}_{T,\tau}^{(p)}(r)-r\bar{\mathbb{B}}_{T,\tau}^{(p)}%
(1)):r\in\lbrack\omega,1]\}$ weakly converges to $\{\Lambda_{\tau}^{(p)}%
(\bar{\mathbb{B}}_{\tau}^{(p)}(r)-r\bar{\mathbb{B}}_{\tau}^{(p)}%
(1)):r\in\lbrack\omega,1]\},$ which is equivalent in distribution to a
$p\times1$ vector of the Brownian bridge process $\{\Delta_{\tau}^{(p)}%
(\bar{\mathbf{B}}^{(p)}(r)-r\bar{\mathbf{B}}^{(p)}(1)):r\in\lbrack\omega,1]\}$
with $\Delta_{\tau}^{(p)}(\Delta_{\tau}^{(p)})^{\top}\equiv\Xi^{(p)}(\tau
,\tau),$and thus it follows from the continuous mapping theorem that
\[
\big (\sqrt{T}\hat{\rho}_{\tau,T}^{(p)},\hat{V}_{\tau,p}\big )\rightarrow
^{d}\big (\Delta_{\tau}^{(p)}\bar{\mathbf{B}}^{(p)}(1),\Delta_{\tau}^{(p)}%
\bar{\mathbf{V}}^{(p)}(\Delta_{\tau}^{(p)})^{\top}\big ).
\]
Thus, we obtain $\hat{S}_{\tau}^{(p)}\rightarrow^{d}\bar{\mathbf{B}}%
^{(p)}\left(  1\right)  ^{\top}(\bar{\mathbf{V}}^{(p)})^{-1}\bar{\mathbf{B}%
}^{(p)}(1)$. This completes the proof.
\end{proof}

\vspace{0.5cm}

\begin{proof}
[\textbf{Proof of Theorem \ref{theorem:self-norm-power}}]Under both fixed and
local alternative, the argument used in Theorem 4 gives
\[
\sqrt{T}\left(  \hat{\rho}_{\tau,[Tr]}^{(p)}-\rho_{\tau}^{(p)}\right)
=r^{-1}\Lambda_{\tau}^{(p)}\bar{\mathbb{B}}_{T,\tau}^{(p)}(r)+o_{p}(1),
\]
thereby yielding $\hat{V}_{\tau, p}\Rightarrow(\Lambda_{\tau}^{(p)}%
\Delta_{\tau}^{(p)})\bar{\mathbf{V}}^{(p)}(\Lambda_{\tau}^{(p)}\Delta_{\tau
}^{(p)})^{^{\top}}$.

(a) Under the fixed alternative, we have $\sqrt{T}\hat{\rho}_{\tau,T}%
^{(p)}=\Lambda_{\tau}^{(p)} \bar{\mathbb{B}}_{T,\tau}^{(p)}(1) +\sqrt{T}%
\rho_{\tau}^{(p)}+o_{p}(1), $ where the right-hand side diverges in
probability as $T\rightarrow\infty$. Since the critical value we use is finite
in probability from Theorem 4, we obtain the desired result.

(b) Under the local alternative, $\sqrt{T}\hat{\rho}_{\tau,T}^{(p)}%
=\Lambda_{\tau}^{(p)}\bar{\mathbb{B}}_{T,\tau}^{(p)}(1)+\xi_{\tau}^{(p)}%
+o_{p}(1). $ It follows that
\[
\hat{S}_{\tau}^{(p)}\rightarrow^{d}\left\{  \bar{\mathbf{B}}^{(p)}%
(1)+(\Lambda_{\tau}^{(p)}\Delta_{\tau}^{(p)})^{-1}\xi_{\tau}^{(p)}\right\}
^{^{\top}}\left(  \bar{\mathbf{V}}^{(p)}\right)  ^{-1}\left\{  \bar
{\mathbf{B}}^{(p)}(1)+(\Lambda_{\tau}^{(p)}\Delta_{\tau}^{(p)})^{-1}\xi_{\tau
}^{(p)}\right\}  .
\]
This completes the proof.
\end{proof}



\vspace{0.5cm}

\renewcommand{\theLemma}{A\arabic{Lemma}}
\renewcommand{\theProposition}{A\arabic{Proposition}} \setcounter{Lemma}{0}

\vspace{0.5cm}

\noindent\textbf{Appendix D. Partial Cross-Quantilogram}

\vspace{0.5cm}

For $1\leq i,j\leq l$, let $\mathbf{1}_{ij}=1[y_{it}\leq q_{i,t}(\tau
_{i}),y_{jt}\leq q_{j,t}(\tau_{j})]$ and define
\[
\mathbb{V}_{T,ij}=\frac{1}{\sqrt{T}}\sum_{t=1}^{T}\left(  \mathbf{1}%
_{ij}-E\left[  \mathbf{1}_{ij}\right]  \right)  \ \ \mathrm{and}%
\ \ \mathbb{W}_{i,T}=\frac{1}{\sqrt{T}}\sum_{t=1}^{T}x_{it}\psi_{\tau_{i}%
}\left(  y_{it}-q_{i,t}(\tau_{i})\right)  .
\]

\vspace{0.5cm}

\begin{proof}
[\textbf{Proof of Theorem \ref{theorem:pcq}}]We first consider (a). The
correlation matrix $R_{\bar{\tau}}$ is symmetric and $\hat{R}_{\bar{\tau}%
}^{-1}-R_{\bar{\tau}}^{-1}=-\hat{R}_{\bar{\tau}}^{-1}(\hat{R}_{\bar{\tau}%
}-R_{\bar{\tau}})R_{\bar{\tau}}^{-1}$. It follows that $\mathrm{vec}(\hat
{P}_{\bar{\tau}}-P_{\bar{\tau}})=-P_{\bar{\tau}}\otimes\hat{P}_{\bar{\tau}%
}\mathrm{vec}(\hat{R}_{\bar{\tau}}-R_{\bar{\tau}})$, which implies
\[
\sqrt{T}(\hat{p}_{\bar{\tau},12}-p_{\bar{\tau},12})=-\sum_{i=1}^{l}\sum
_{j=1}^{l}p_{\bar{\tau},1i}\hat{p}_{\bar{\tau},2j}\sqrt{T}(\hat{r}_{\bar{\tau
},ij}-r_{\bar{\tau},ij}).
\]
Following the line of proof of Theorem 1, we can show $\hat{P}_{\bar{\tau}%
}=P_{\bar{\tau}}+o_{p}(1)$ and also have $\sqrt{T}(\hat{r}_{\bar{\tau}%
,ii}-r_{\bar{\tau},ii})=o_{p}(1)$ for $i=1,\dots,l$, from argument in Lemma
2.1 of Arcones (1998). Thus, we have
\[
\sqrt{T}(\hat{p}_{\bar{\tau},12}-p_{\bar{\tau},12})=-\sum_{\substack{1\leq
i,j\leq l\\i\not =j}}p_{\bar{\tau},1i}p_{\bar{\tau},2j}\sqrt{T}(\hat{r}%
_{\bar{\tau},ij}-r_{\bar{\tau},ij})+o_{p}(1).
\]
Proposition \ref{Proposition:u-l-approx} implies
\begin{align*}
\sqrt{T}(\hat{r}_{\bar{\tau},ij}-r_{\bar{\tau},ij})  &  =\mathbb{V}%
_{T,ij}+\nabla_{1}G_{ij}^{\top}\sqrt{T}\{\hat{\beta}_{i}(\tau_{i})-\beta
_{i}(\tau_{i})\}\\
&  \ \ \ +\nabla_{2}G_{ij}^{\top}\sqrt{T}\{\hat{\beta}_{j}(\tau_{j})-\beta
_{j}(\tau_{j})\}+o_{p}(1),
\end{align*}
for $1\leq i,j\leq l$ with $i\not =j$. Since $\mathbb{V}_{T,ij}=\mathbb{V}%
_{T,ji}$ and $\nabla_{2}G_{ij}=\nabla_{1}G_{ji}$ for $1\leq i,j\leq l$,
\[
\sqrt{T}(\hat{p}_{\bar{\tau},12}-p_{\bar{\tau},12})=-\sum_{\substack{1\leq
i,j\leq l\\i\not =j}}p_{\bar{\tau},1i}p_{\bar{\tau},2j}\mathbb{V}_{T,ij}%
-\sum_{i=1}^{l}\lambda_{\bar{\tau}i}^{\top}\sqrt{T}\{\hat{\beta}_{i}(\tau
_{i})-\beta_{i}(\tau_{i})\}+o_{p}(1),
\]
where $\lambda_{\bar{\tau}i}$ is defined in Theorem \ref{theorem:pcq}.
Proposition \ref{Proposition:uniform-bahadur} implies
\[
\sqrt{T}(\hat{p}_{\bar{\tau},12}-p_{\bar{\tau},12})=-\sum_{\substack{1\leq
i,j\leq l\\i\not =j}}p_{\bar{\tau},1i}p_{\bar{\tau},2j}\mathbb{V}_{T,ij}%
+\sum_{i=1}^{l}\lambda_{\bar{\tau}i}^{\top}D_{i}(\tau_{i})^{-1}\mathbb{W}%
_{i,T}+o_{p}(1).
\]
The asymptotic normality can be established by using the central limit theorem
for mixing random vectors. The proofs of (b) and (c) are similar to those of
Theorems 2 and 4, respectively, and thus we omit the details.
\end{proof}

\pagebreak

\noindent\textbf{Appendix E. Tables and Figures}

\begin{center}
\singlespacing Table 1. (size) Empirical rejection frequency of the Box-Ljung
test statistic $\hat{Q}_{\tau}^{(p)}$ based on the bootstrap procedure

(VAR\ with DGP1 and the nominal level 5\%)

\begin{tabular}
[c]{ccccccccccc}\hline
&  & \multicolumn{9}{c}{Quantiles $(\tau_{1}=\tau_{2})$}\\\hline
$T$ & $p$ & 0.05 & 0.10 & 0.20 & 0.30 & 0.50 & 0.70 & 0.80 & 0.90 &
0.95\\\hline\hline
500 & 1 & 0.051 & 0.025 & 0.037 & 0.045 & 0.040 & 0.043 & 0.043 & 0.033 &
0.047\\
& 2 & 0.017 & 0.032 & 0.043 & 0.072 & 0.068 & 0.060 & 0.057 & 0.036 & 0.012\\
& 3 & 0.011 & 0.022 & 0.051 & 0.073 & 0.066 & 0.055 & 0.050 & 0.032 & 0.010\\
& 4 & 0.007 & 0.022 & 0.047 & 0.062 & 0.059 & 0.057 & 0.046 & 0.026 & 0.008\\
& 5 & 0.009 & 0.025 & 0.035 & 0.052 & 0.051 & 0.052 & 0.054 & 0.027 &
0.006\\\hline
1000 & 1 & 0.033 & 0.030 & 0.037 & 0.048 & 0.047 & 0.039 & 0.037 & 0.052 &
0.042\\
& 2 & 0.018 & 0.037 & 0.045 & 0.051 & 0.043 & 0.046 & 0.052 & 0.041 & 0.015\\
& 3 & 0.011 & 0.031 & 0.049 & 0.056 & 0.044 & 0.054 & 0.045 & 0.028 & 0.006\\
& 4 & 0.013 & 0.027 & 0.049 & 0.053 & 0.041 & 0.055 & 0.041 & 0.022 & 0.008\\
& 5 & 0.007 & 0.022 & 0.044 & 0.040 & 0.044 & 0.040 & 0.036 & 0.021 &
0.006\\\hline
2000 & 1 & 0.038 & 0.034 & 0.040 & 0.034 & 0.034 & 0.048 & 0.050 & 0.034 &
0.054\\
& 2 & 0.028 & 0.025 & 0.043 & 0.035 & 0.045 & 0.051 & 0.050 & 0.035 & 0.024\\
& 3 & 0.023 & 0.033 & 0.031 & 0.045 & 0.050 & 0.045 & 0.042 & 0.029 & 0.018\\
& 4 & 0.017 & 0.023 & 0.042 & 0.052 & 0.038 & 0.036 & 0.038 & 0.025 & 0.016\\
& 5 & 0.009 & 0.025 & 0.038 & 0.038 & 0.035 & 0.035 & 0.034 & 0.019 &
0.014\\\hline
\end{tabular}

\end{center}

{\small \noindent Notes: The first and second columns report the sample size
}$T${\small \ and the number of lags }$p${\small \ for the Box-Ljung test
statistics }${\small \hat{Q}}_{\tau}^{(p)},$ {\small respectively. The rest of
columns show empirical rejection frequencies based on bootstrap critical
values at the 5\% significance level. The tuning parameter }${\small \gamma}%
${\small \ is set to be 0.01.}\singlespacing\pagebreak

\begin{center}
\singlespacing Table 2. (power) Empirical rejection frequency of the Box-Ljung
test statistic $\hat{Q}_{\tau}^{(p)}$ based on the bootstrap procedure

(VAR\ with DGP2 (GARCH-X process))

\begin{tabular}
[c]{ccccccccccc}\hline
&  & \multicolumn{9}{c}{Quantiles $(\tau_{1}=\tau_{2})$}\\\hline
$T$ & $p$ & 0.05 & 0.10 & 0.20 & 0.30 & 0.50 & 0.70 & 0.80 & 0.90 &
0.95\\\hline\hline
500 & 1 & 0.361 & 0.701 & 0.722 & 0.383 & 0.042 & 0.383 & 0.713 & 0.684 &
0.362\\
& 2 & 0.303 & 0.610 & 0.584 & 0.257 & 0.063 & 0.231 & 0.589 & 0.589 & 0.300\\
& 3 & 0.270 & 0.541 & 0.491 & 0.202 & 0.053 & 0.174 & 0.467 & 0.515 & 0.246\\
& 4 & 0.230 & 0.451 & 0.403 & 0.172 & 0.058 & 0.126 & 0.378 & 0.447 & 0.208\\
& 5 & 0.203 & 0.393 & 0.344 & 0.134 & 0.060 & 0.115 & 0.314 & 0.386 &
0.177\\\hline
1000 & 1 & 0.751 & 0.948 & 0.942 & 0.638 & 0.048 & 0.619 & 0.951 & 0.952 &
0.760\\
& 2 & 0.708 & 0.916 & 0.912 & 0.425 & 0.046 & 0.431 & 0.908 & 0.932 & 0.712\\
& 3 & 0.651 & 0.877 & 0.845 & 0.322 & 0.052 & 0.315 & 0.849 & 0.897 & 0.651\\
& 4 & 0.589 & 0.838 & 0.784 & 0.255 & 0.048 & 0.250 & 0.778 & 0.854 & 0.596\\
& 5 & 0.537 & 0.801 & 0.716 & 0.203 & 0.042 & 0.190 & 0.714 & 0.809 &
0.563\\\hline
2000 & 1 & 0.969 & 0.999 & 0.999 & 0.905 & 0.044 & 0.923 & 0.999 & 0.998 &
0.974\\
& 2 & 0.965 & 1.000 & 0.999 & 0.808 & 0.053 & 0.817 & 0.999 & 1.000 & 0.979\\
& 3 & 0.959 & 1.000 & 0.997 & 0.688 & 0.053 & 0.673 & 0.998 & 1.000 & 0.967\\
& 4 & 0.944 & 1.000 & 0.990 & 0.585 & 0.047 & 0.573 & 0.994 & 0.999 & 0.957\\
& 5 & 0.930 & 1.000 & 0.982 & 0.510 & 0.037 & 0.485 & 0.987 & 0.997 &
0.938\\\hline
\end{tabular}

\end{center}

{\small \noindent Notes: Same as Table 1.}\singlespacing\pagebreak

\begin{center}
\singlespacing Table 3. Empirical Rejection Frequencies of the sup-version of
the Box-Ljung test statistic $\sup_{\tau\in\mathcal{T}}\hat{Q}_{\tau}^{(p)}$
based on the bootstrap procedure

(VAR with DGP1/DGP2 and the nominal level 5\%)

\begin{tabular}
[c]{cccccc}\hline
$T$ & $p$ &  & DGP1 (size) &  & DGP2 (power)\\\hline\hline
500 & 1 &  & 0.004 &  & 0.624\\
& 2 &  & 0.007 &  & 0.460\\
& 3 &  & 0.008 &  & 0.356\\
& 4 &  & 0.008 &  & 0.265\\
& 5 &  & 0.009 &  & 0.221\\\hline
1000 & 1 &  & 0.004 &  & 0.976\\
& 2 &  & 0.011 &  & 0.946\\
& 3 &  & 0.006 &  & 0.895\\
& 4 &  & 0.003 &  & 0.825\\
& 5 &  & 0.007 &  & 0.765\\\hline
2000 & 1 &  & 0.012 &  & 1.000\\
& 2 &  & 0.015 &  & 1.000\\
& 3 &  & 0.020 &  & 1.000\\
& 4 &  & 0.020 &  & 1.000\\
& 5 &  & 0.017 &  & 0.999\\\hline
\end{tabular}

\end{center}

{\small \noindent Notes: The first and second columns report the sample size
}$T${\small \ and the number of lags }$p${\small \ for the sup-version of the
Box-Ljung test statistic }$\sup_{\tau\in\mathcal{T}}{\small \hat{Q}}_{\tau
}^{(p)},$ {\small respectively. The sup-version test statistic is the
Box-Ljung test statistic maximized over nine quantiles }${\small \tau}%
_{i}{\small =0.05,0.1,0.2,0.3,0.5,0.7,0.8,0.9}${\small and }${\small 0.95}%
.${\small \ The third and fourth columns show empirical rejection frequencies
based on bootstrap critical values at the 5\% significance level. The tuning
parameter }${\small \gamma}${\small \ is set to be 0.01.}%
\singlespacing\pagebreak

\begin{center}
\singlespacing Table 4. (size) Empirical Rejection Frequencies of the
Self-Normalized Statistics

(VAR with DGP1 and the nominal level: 5\%)

\begin{tabular}
[c]{ccccccccccc}\hline
&  & \multicolumn{9}{c}{Quantiles $(\tau_{1}=\tau_{2})$}\\\hline
$T$ & $p$ & 0.05 & 0.10 & 0.20 & 0.30 & 0.50 & 0.70 & 0.80 & 0.90 &
0.95\\\hline\hline
500 & 1 & 0.043 & 0.000 & 0.000 & 0.007 & 0.003 & 0.013 & 0.007 & 0.000 &
0.047\\
& 2 & 0.090 & 0.010 & 0.007 & 0.003 & 0.003 & 0.003 & 0.000 & 0.003 & 0.127\\
& 3 & 0.130 & 0.007 & 0.000 & 0.007 & 0.003 & 0.000 & 0.003 & 0.000 & 0.143\\
& 4 & 0.150 & 0.007 & 0.000 & 0.000 & 0.000 & 0.000 & 0.000 & 0.000 & 0.167\\
& 5 & 0.187 & 0.003 & 0.000 & 0.000 & 0.000 & 0.000 & 0.000 & 0.000 &
0.177\\\hline
1000 & 1 & 0.010 & 0.013 & 0.010 & 0.013 & 0.020 & 0.003 & 0.007 & 0.003 &
0.007\\
& 2 & 0.023 & 0.007 & 0.000 & 0.007 & 0.000 & 0.003 & 0.003 & 0.007 & 0.037\\
& 3 & 0.040 & 0.003 & 0.010 & 0.000 & 0.007 & 0.003 & 0.007 & 0.000 & 0.047\\
& 4 & 0.043 & 0.000 & 0.007 & 0.000 & 0.007 & 0.003 & 0.003 & 0.000 & 0.047\\
& 5 & 0.047 & 0.000 & 0.007 & 0.000 & 0.000 & 0.000 & 0.003 & 0.000 &
0.053\\\hline
2000 & 1 & 0.013 & 0.030 & 0.017 & 0.017 & 0.033 & 0.013 & 0.020 & 0.017 &
0.027\\
& 2 & 0.007 & 0.000 & 0.007 & 0.007 & 0.027 & 0.010 & 0.027 & 0.017 & 0.020\\
& 3 & 0.017 & 0.000 & 0.003 & 0.003 & 0.013 & 0.010 & 0.003 & 0.003 & 0.013\\
& 4 & 0.013 & 0.000 & 0.003 & 0.000 & 0.010 & 0.007 & 0.003 & 0.000 & 0.013\\
& 5 & 0.010 & 0.003 & 0.003 & 0.000 & 0.007 & 0.003 & 0.000 & 0.000 &
0.017\\\hline
\end{tabular}

\end{center}

{\small \noindent Notes: The first and second columns report the sample size
}$T${\small \ and the number of lags }$p${\small \ for the test statistics
}${\small \hat{Q}}_{\tau}^{(p)},$ {\small respectively. The rest of columns
show empirical rejection frequencies given simulated critical values at 5\%
significance level. The trimming value }${\small \omega}${\small \ is set to
be 0.1.}\singlespacing

\begin{center}
Table 5. (power) Empirical Rejection Frequencies of the Self-Normalized Statistics

(VAR with DGP2: GARCH-X process)

\begin{tabular}
[c]{ccccccccccr}\hline
&  & \multicolumn{9}{c}{Quantiles $(\tau_{1}=\tau_{2})$}\\\hline
$T$ & $p$ & 0.05 & 0.10 & 0.20 & 0.30 & 0.50 & 0.70 & 0.80 & 0.90 &
0.95\\\hline\hline
500 & 1 & 0.067 & 0.230 & 0.297 & 0.077 & 0.007 & 0.150 & 0.300 & 0.253 &
0.050\\
& 2 & 0.030 & 0.070 & 0.113 & 0.033 & 0.000 & 0.037 & 0.113 & 0.077 & 0.010\\
& 3 & 0.047 & 0.010 & 0.043 & 0.010 & 0.000 & 0.017 & 0.023 & 0.020 & 0.023\\
& 4 & 0.063 & 0.007 & 0.023 & 0.000 & 0.000 & 0.010 & 0.013 & 0.003 & 0.050\\
& 5 & 0.120 & 0.003 & 0.007 & 0.003 & 0.000 & 0.003 & 0.003 & 0.000 &
0.080\\\hline
1000 & 1 & 0.347 & 0.643 & 0.683 & 0.313 & 0.010 & 0.323 & 0.673 & 0.663 &
0.317\\
& 2 & 0.153 & 0.523 & 0.527 & 0.177 & 0.020 & 0.180 & 0.543 & 0.463 & 0.157\\
& 3 & 0.063 & 0.300 & 0.347 & 0.090 & 0.010 & 0.097 & 0.377 & 0.283 & 0.063\\
& 4 & 0.033 & 0.210 & 0.223 & 0.050 & 0.000 & 0.037 & 0.243 & 0.153 & 0.017\\
& 5 & 0.047 & 0.097 & 0.133 & 0.030 & 0.000 & 0.023 & 0.127 & 0.097 &
0.020\\\hline
2000 & 1 & 0.757 & 0.917 & 0.923 & 0.663 & 0.030 & 0.693 & 0.940 & 0.920 &
0.707\\
& 2 & 0.577 & 0.873 & 0.917 & 0.513 & 0.013 & 0.540 & 0.883 & 0.863 & 0.577\\
& 3 & 0.427 & 0.787 & 0.860 & 0.400 & 0.007 & 0.397 & 0.800 & 0.810 & 0.390\\
& 4 & 0.270 & 0.680 & 0.807 & 0.323 & 0.017 & 0.297 & 0.740 & 0.680 & 0.250\\
& 5 & 0.197 & 0.567 & 0.700 & 0.223 & 0.003 & 0.213 & 0.680 & 0.590 &
0.163\\\hline
\end{tabular}

\end{center}

{\small \noindent Notes: Same as Table 4.}\singlespacing\pagebreak

\begin{center}%
{\includegraphics[
height=2.8028in,
width=3.198in
]%
{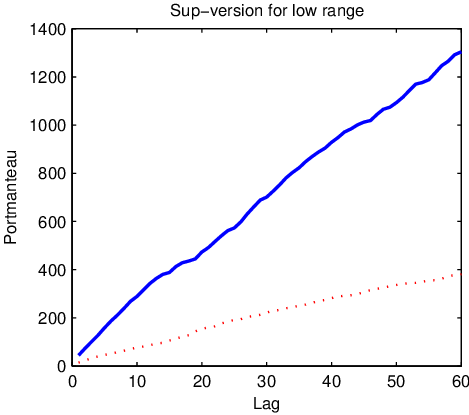}%
}
{\includegraphics[
height=2.8028in,
width=3.198in
]%
{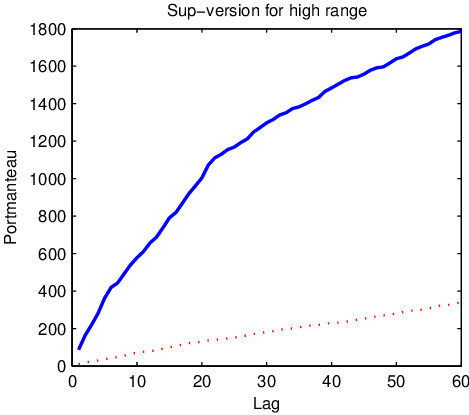}%
}

\end{center}

{\small \noindent Figure 1. Sup-version Box-Ljung test statistic }$\sup
_{\tau\in\mathcal{T}}\hat{Q}_{\tau}^{(p)}${\small \ for each lag }$p$
{\small to detect directional predictability from stock variance to stock
return}. {\small For the low range, we set\ }$\mathcal{T}{\small =[0.1,0.3]}%
${\small and }${\small \tau}_{i}{\small =0.1+0.02k}${\small for }%
${\small k=0,1,\ldots,10}.$ {\small We let }${\small \tau}_{1}{\small =\tau
}_{2}${\small for }$\hat{\rho}_{\tau}(k).${\small For the high range, we set
}$\mathcal{T}{\small =[0.7,0.9]}$ {\small and }${\small \tau}_{i}%
{\small =0.7+0.02k}$ {\small for }${\small k=0,1,\ldots,10}.$ {\small The
dashed lines are the 95\% bootstrap confidence intervals centred at the null
hypothesis.}\pagebreak

\begin{center}%
{\includegraphics[
height=3.4406in,
width=6.4572in
]%
{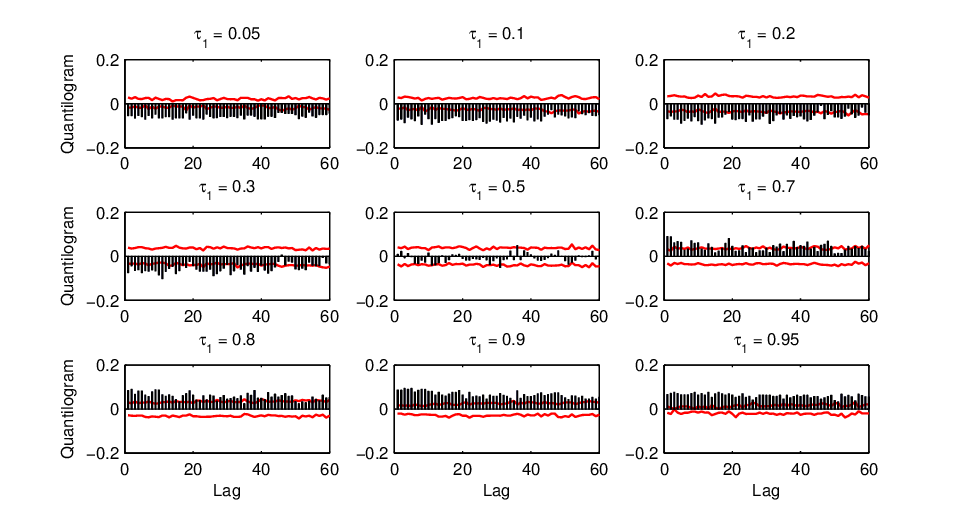}%
}

\end{center}

\noindent{\small Figure 2(a). The sample cross-quantilogram }$\hat{\rho}%
_{\tau}(k)${\small \ for }$\tau_{2}{\small =0.1}${\small \ to detect
directional predictability from stock variance to stock return. Bar graphs
describe sample cross-quantilograms and lines are the 95\% bootstrap
confidence intervals centred at zero.}

\begin{center}%
{\includegraphics[
height=3.4406in,
width=6.4572in
]%
{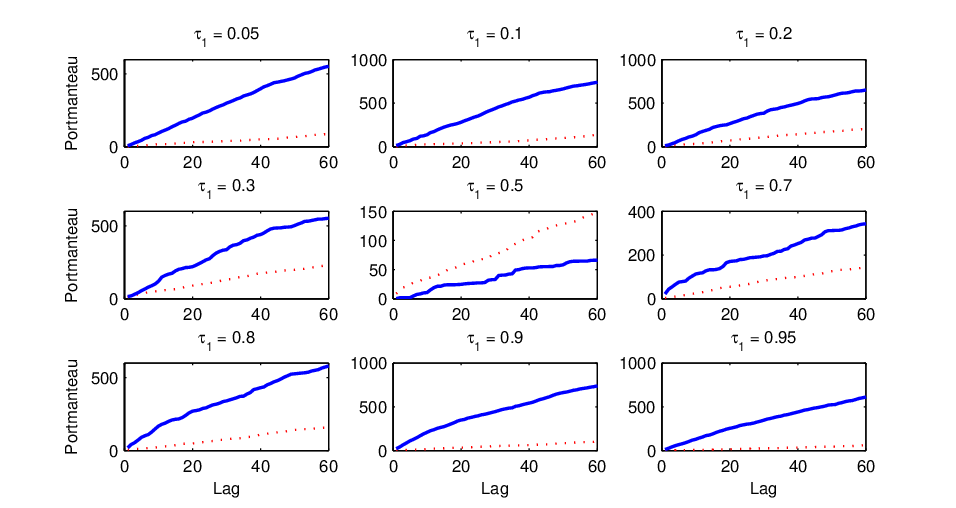}%
}

\end{center}

{\small \noindent Figure 2(b). Box-Ljung test statistic }$\hat{Q}_{\tau}%
^{(p)}${\small \ for each lag }$p${\small \ and quantile }$\tau$%
{\small \ using }$\hat{\rho}_{\tau}(k)${\small \ with }$\tau_{2}{\small =0.1}%
${\small . The dashed lines are the 95\% bootstrap confidence intervals
centred at zero.}\pagebreak

\begin{center}%
{\includegraphics[
height=3.4406in,
width=6.4572in
]%
{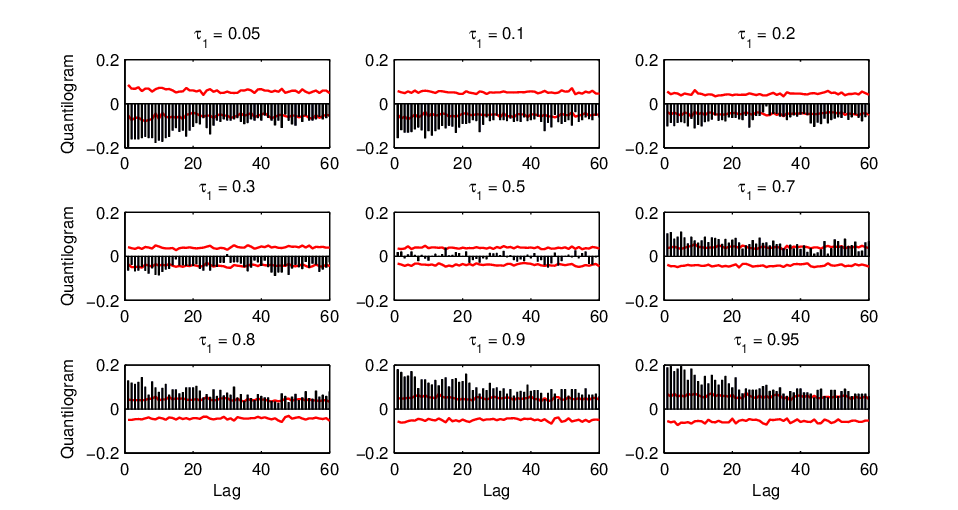}%
}

\end{center}

\noindent{\small Figure 3(a). The sample cross-quantilogram }$\hat{\rho}%
_{\tau}(k)${\small \ with }$\tau_{2}{\small =0.9}${\small \ to detect
directional predictability from stock variance to stock return. Same as Figure
1(a). \ }

\begin{center}%
{\includegraphics[
height=3.4406in,
width=6.4572in
]%
{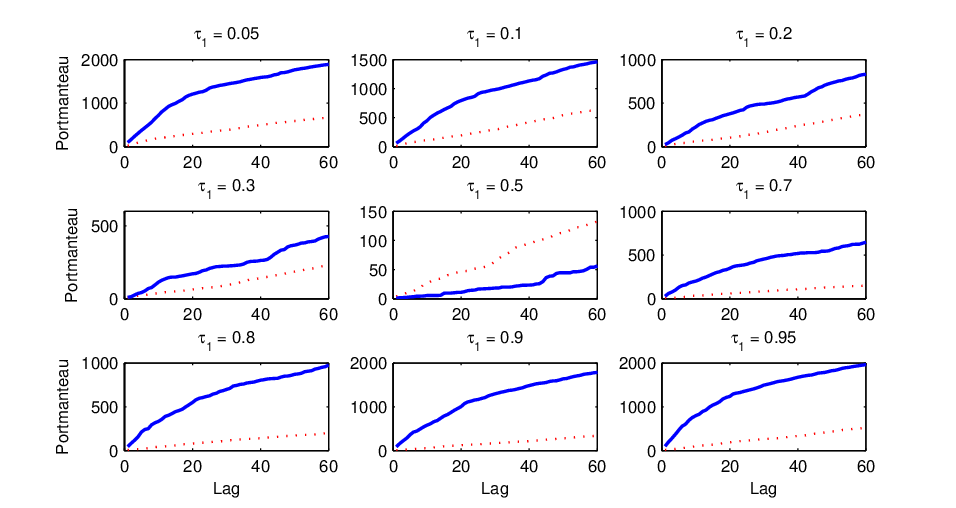}%
}

\end{center}

\noindent{\small Figure 3(b). Box-Ljung test statistic }$\hat{Q}_{\tau}^{(p)}%
${\small \ for each lag }$p${\small \ and quantile }$\tau${\small \ using
}$\hat{\rho}_{\tau}(k)${\small \ with }$\tau_{2}{\small =0.9}$.{\small \ Same
as Figure 1(b).}\pagebreak

\begin{center}
\singlespacing
%

{\includegraphics[
height=6.4657in,
width=5.6903in
]%
{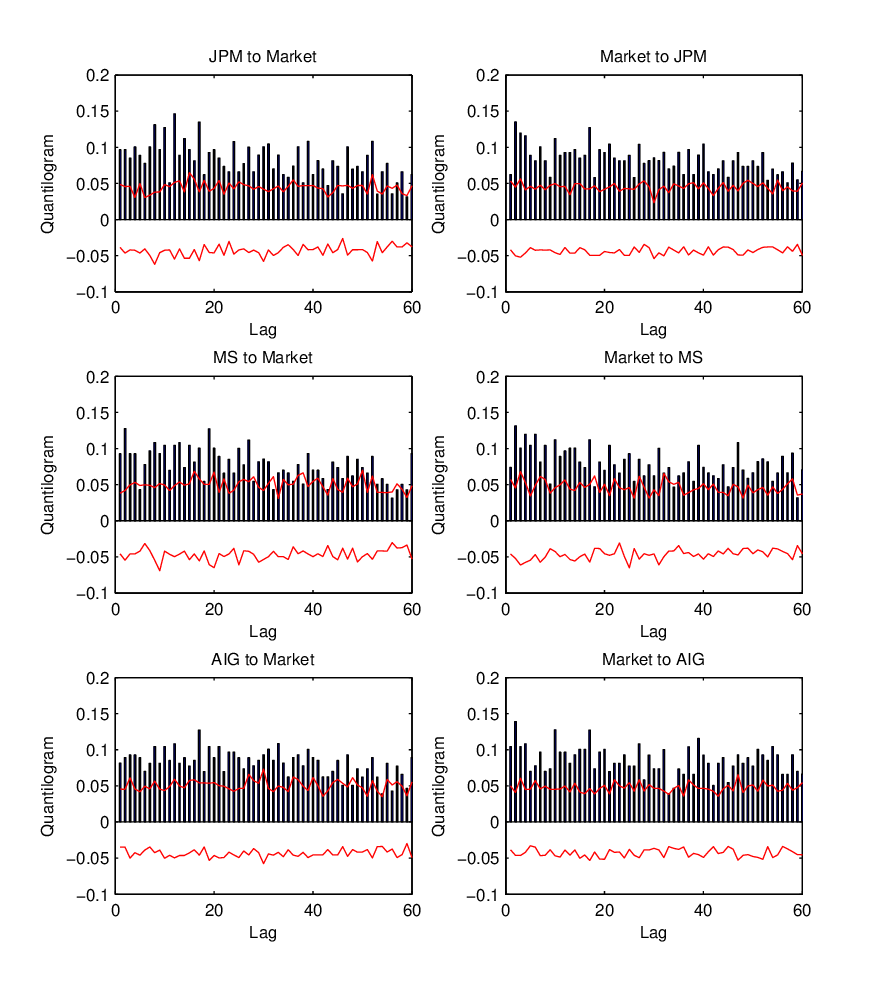}%
}

\end{center}

{\small \noindent Figure 4. The sample cross-quantilogram }$\hat{\rho}_{\tau
}(k)${\small . Bar graphs describe sample cross-quantilograms and lines are
the 95\% bootstrap confidence intervals centred at zero.}%
\singlespacing\pagebreak

\begin{center}
\singlespacing
%

{\includegraphics[
height=6.4657in,
width=5.6903in
]%
{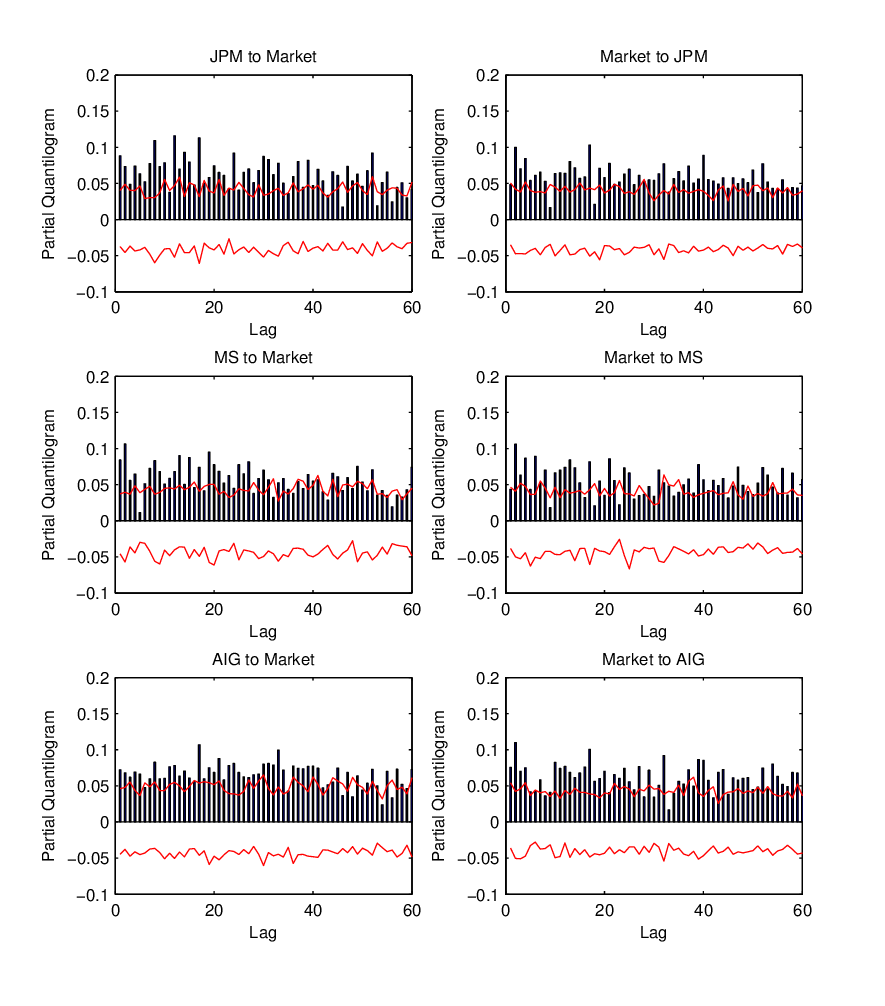}%
}

\end{center}

{\small \noindent Figure 5. The sample partial cross-quantilogram }$\hat{\rho
}_{\bar{\tau}|\mathbf{z}}(k)${\small . Bar graphs describe sample\ partial
cross-quantilograms and lines are the 95\% bootstrap confidence intervals
centred at zero.}\singlespacing\pagebreak

\doublespacing{\small \noindent }

\setstretch{1.00}%

%

\end{document}